\providecommand{\U}[1]{\protect\rule{.1in}{.1in}}
\newtheorem{theorem}{Theorem}
\newtheorem{acknowledgement}[theorem]{Acknowledgement}
\newtheorem{corollary}[theorem]{Corollary}
\newtheorem{definition}[theorem]{Definition}
\newtheorem{example}[theorem]{Example}
\newtheorem{lemma}[theorem]{Lemma}
\newtheorem{remark}[theorem]{Remark}
\newenvironment{proof}[1][Proof]{\noindent\textbf{#1.} }{\ \rule{0.5em}{0.5em}}
\begin{document}

\title{Local Geometry of Self-similar Sets: Typical Balls, Tangent Measures
and Asymptotic Spectra}
\author{Manuel Mor\'{a}n $^{1,2}$, Marta LLorente$^3$ and Mar\'{\i}a Eugenia
Mera$^1$ }
\date{}
\maketitle

{\centering{\small {$^{1}$ Departamento de An\'{a}lisis Econ\'{o}mico y
Econom\'{\i}a Cuantitativa. Universidad Complutense de Madrid. Campus de
Somosaguas, 28223 Madrid, Spain.}}}\newline
{\centering{{\small {$^{2}$ IMI-Institute of Interdisciplinary Mathematics.
Universidad Complutense de Madrid. Plaza de Ciencias 3, 28040 Madrid, Spain.}%
}}}\newline
{\centering{\small {$^{3}$ Departamento de An\'{a}lisis Econ\'{o}mico: Econom%
\'{\i}a Cuantitativa. Universidad Aut\'{o}noma de Madrid, Campus de
Cantoblanco, 28049 Madrid, Spain.}}}

{\centering{Emails: mmoranca@ucm.es, m.llorente@uam.es, mera@ucm.es}}

\textit{Short Title:} Local Geometry of Self-similar Sets

\begin{abstract}
We analyse the local geometric structure of self-similar sets with open set
condition through the study of the properties of a distinguished family of
spherical neighbourhoods, the typical balls. We quantify the complexity of
the local geometry of self-similar sets, showing that there are uncountably
many classes of spherical neighbourhoods that are not equivalent under
similitudes. We show that, at a tangent level, the uniformity of the
Euclidean space is recuperated in the sense that any typical ball is a
tangent measure of the measure $\nu $ at $\nu $-a.e. point, where $\nu $ is
any self-similar measure. We characterise the spectrum of asymptotic
densities of metric measures in terms of the packing and centred Hausdorff
measures. As an example, we compute the spectrum of asymptotic densities of
the Sierpinski gasket.
\end{abstract}

\textit{Keywords}: Self-Similar Sets, Hausdorff Measures, Tangent Measures,
Density of Measures, Computability of Fractal Measures, Complexity of
Topological Spaces, Sierpinski Gasket. \newline
\textit{[2020] MSC: 28A78, 28A80, 28A75, 54A05, 54A25}

\newpage
\section{Introduction and main results\label{Section Introduction}}

In order to gauge the vastness of the set of spherical neighbourhoods of a
metric space $X,$ it is useful to consider the quotient spaces $%
Sph_{X}/\simeq _{\mathcal{F}},$ where $Sph_{X}$ is the set of spherical
neighbourhoods of $X$ and $\simeq _{\mathcal{F}}$ is the equivalence class
associated with some group $\mathcal{F}$ of self-mappings of $X:$ $B\simeq _{%
\mathcal{F}}B^{\prime }\Leftrightarrow $ $B=f(B^{\prime })$ for $B,$ $%
B^{\prime }\in $ $Sph_{X}$ and some $f\in \mathcal{F}.$ The regularity of
the Euclidean space $\mathbb{R}^{n}$ is made clear by the fact that if $%
\mathcal{S}_{n}$ is the set of similarities of $\mathbb{R}^{n},$ then $Sph_{%
\mathbb{R}^{n}}/\simeq _{\mathcal{S}_{n}}$ consists of a unique equivalence
class.

In this paper, we study the local geometry of a self-similar set $E\subset 
\mathbb{R}^{n}$ satisfying the open set condition (OSC), geometry which is
described by the spherical neighbourhoods of $E$ as a metric subspace of $%
\mathbb{R}^{n},$ i.e. by restricted balls of the form $B\cap E,$ where $B$
is a Euclidean ball. For general points $x,$ $y\in E,$ if $B(x,d)$ denotes
the closed Euclidean ball centred at $x$ and with radius $d,$ then $%
B(x,d)\cap E$ and $B(y,d)\cap E$ are not equivalent by translation, and $%
B(x,d)\cap E$ and $B(x,d^{\prime })\cap E$ with $d\neq d^{\prime }$ are not
homothetic-equivalent. Using classical tools of fractal geometry, namely,
the $s$-densities of metric measures on balls (see Definitions \ref{Restrictedballs} and \ref{Spherical density}), and Marstrand's Theorem \cite{MARST}, together with the results in Sec. \ref{Asymptotic spectra}, we are
able to prove that, for general self-similar sets with OSC, there are
uncountably many equivalence classes in the quotient spaces $Sph_{E}/\simeq
_{\mathcal{S}_{n}}.$ This gives account of the complexity of the purely
deterministic self-similar geometry.

In spite of these facts, the literature has established the existence of a
strong kind of regularity, on a tangent level and on average, in the
neighbourhoods of a self-similar set.

Recall that a \textit{self-similar set} is defined as the unique compact set 
$E\subset \mathbb{R}^{n}$ that satisfies the basic equation of
self-similarity 
\begin{equation}
E=\cup _{i=0}^{m-1}f_{i}(E).  \label{1}
\end{equation}
for \ a given system $\Psi =\left\{ f_{i}\right\} _{i\in M},\ M:=\left\{
0,1,\dots ,m-1\right\} $ of contractive similitudes in $\mathbb{R}^{n}$. We
shall assume that the system $\Psi $ satisfies the OSC, meaning that there
is an open set $\mathcal{O\subset }\mathbb{R}^{n}$ such that $f_{i}\mathcal{%
(O)\subset O}$ for all $i\in M$ and $f_{i}(\mathcal{O)\cap }f_{j}(\mathcal{%
O)=\varnothing }$ for $i,$ $j\in M,\ i\neq j.$ We shall refer to such a set $%
\mathcal{O}$ as a \textit{feasible open set} for $\Psi .$ We can assume,
without loss of generality, as we shall from now on, that $\mathcal{\ O\cap }%
E\neq \varnothing $ holds, also called \textit{strong open set condition}
(SOSC) (cf. \cite{La} and \cite{Sc}, see also \cite{Mo2}). If $f_{i}(E)\cap
f_{j}(E)=\varnothing $ for $i,$ $j\in M,$ $i\neq j,$ it is said that the 
\textit{strong separation condition} (SSC) holds, in which case the OSC is
also fulfilled.

We want to understand the local geometry of $E$ through the study of the
local behaviour of the metric $s$-measures, 
\begin{equation}
\mathcal{M}^{s}\lfloor _{E}:=\left\{ \mu ,\text{ }\mathcal{H}^{s}\lfloor
_{E},\text{ }\mathcal{H}_{Sph}^{s}\lfloor _{E},\text{ }C^{s}\lfloor _{E},%
\text{ }P^{s}\lfloor _{E}\right\}  \label{restrictedmetric}
\end{equation}%
where $s$ is the \textit{similarity dimension} of $E,$ $\dim E,$ that is,
the unique real number $s$ that satisfies $\sum_{i\in M}r_{i}^{s}=1,$ $r_{i}$
being the contraction constant of the similarity $f_{i},\ i\in M.$ Here $%
\beta \lfloor _{E}$ stands for a measure $\beta $ restricted to the set $E.$
The measures 
\begin{equation}
\mathcal{M}^{s}:=\left\{ \mathcal{H}^{s},\text{ }\mathcal{H}_{Sph}^{s},\text{
}C^{s},\text{ }P^{s}\right\}  \label{MS}
\end{equation}%
are the $s$-dimensional Hausdorff measure, spherical Hausdorff measure,
centred Hausdorff measure and packing measure, respectively. Any two
measures in $\mathcal{M}^{s}\lfloor _{E}$ are multiple of each other,
moreover, \ in the case that $s$ takes the integer value $n,$ they are also
multiple of the $n$-dimensional Lebesgue measure. Each measure in $\mathcal{M%
}^{s}\lfloor _{E}$ highlights different basic geometric properties of
subsets of $\mathbb{R}^{n}.$ For $\alpha \in \mathcal{M}^{s}\lfloor _{E},$ $%
0<\alpha (E)<\infty $ holds and $E$ is called an $s$-set (see \cite{MAT2}
for further details and Sec. \ref{sectionmetricmeasures} for the definitions
of the measures in $\mathcal{M}^{s}$). We shall present in Sec. \ref%
{subsection self-similar measures} below the \textit{natural probability
measure }$\mu $. For the time being, we can see it as the normalised
measure, $\frac{\alpha }{\alpha (E)}$ of any other $\alpha \in \mathcal{M}%
^{s}\lfloor _{E}.$

The results in this paper about the regularity of the metric measures are
also shared by the wider class of self-similar measures, $\mathcal{M}_{%
\mathcal{S}}(E)$ (see \cite{HUTCH} and Sec. \ref{subsection self-similar
measures} for a definition). Whereas the metric measures, $\mathcal{M}^{s},$
convey a strong geometric meaning, self-similar measures are an essential
tool in multifractal analysis of logarithmic densities, a topic that has
generated a vast amount of literature for the past 30 years.

\subsection{Scenery flow, tangent distribution and tangent measures\label%
{Flow scenery}}

Let $\nu $ be a Radon measure on $\mathbb{R}^{n}$ and let $x$ be a point in
the support of $\nu .$ We can access the local geometry of $\nu \lfloor _{E}$
around $x$ through the following zooming process: let $T_{x,t}(y)=t(y-x),\
t>0,$ be the homothety that maps the ball $B(x,t^{-1})$ onto the unit ball $%
D:=B(0,1).$ Let $\nu _{x,t}$ be the probability measure on $D$ obtained from
the normalisation\ of the restriction to $D$ of the image measure of $\nu
\lfloor _{E}$ under the homothety $T_{x,t}.$ If $\mathcal{M(}D)$ denotes the
set of Radon measures on $D,$ then the mapping $t\rightarrow \nu _{x,t}$ can
be considered as a measure-valued time series that takes values in the
metric space $\mathcal{M(}D)$ endowed with the weak topology. This time
series is called \textit{scenery flow }of $\nu $ around $x$ (cf. \cite%
{Bedford}). The empirical distributions $\Phi _{x,t}(\nu ),$ $t>0,$
associated with such \textquotedblleft time\textquotedblright\ series, are
probability measures on $\mathcal{M(}D)$ (so they belong to the set $%
\mathcal{M}(\mathcal{M(}D))$ of Radon measures on $\mathcal{M(}D)).$ The
empirical distribution $\Phi _{x,t}(\nu )$ gives weight to a set $A\subset 
\mathcal{M(}D)$ according to the rate of the \textit{time} interval $[0,t]$
that the \textquotedblleft empirical\textquotedblright\ data $\delta _{\nu
_{x,t}}$ (unit mass at $\nu _{x,t})$ stay in $A.$ If the empirical
distribution $\Phi _{x,t}(\nu )$ converges to a limit $\Phi _{x}(\nu )$ as $t
$ tends to infinity, then the limiting distribution $\Phi _{x}(\nu )$ is
called the \textit{tangent distribution} of $\nu $ at $x$ (see \cite{BAN1}).

S. Graf \cite{Graf} proved that if $E$ is a self-similar set with OSC and $%
\nu \in \mathcal{M}_{\mathcal{S}}(E),$ then the limit $\Phi _{x}(\nu )$
exists $\nu $-a.e. $x,$ and it does not depend on $x.$ Moreover, he
constructed an explicit formula for the tangent distribution. This author
gave credit for the first of these results to C. Bandt in \cite{BAN1}, and
Bandt in turn gives credit for the same result to S. Graf \cite{BAN} (indeed
a most refreshing case). M. Arbeiter \cite{ARB}, C. Bandt \cite{BAN} and A.
Py\"{o}r\"{a}l\"{a} \cite{Pyorala} extended these results in different ways.
The uniqueness and independence of the limit $\Phi _{x}(\nu )$ from $x$ is
what M. Gavish, \cite{Gavish}, calls, when displayed by a measure, the 
\textit{uniform scaling scenery property }of such a measure. This means
that, at a tangent level and in this sense, the flow scenery recovers the
uniformity of the Euclidean space.

\begin{remark}
\label{tangent Preiss}There is another way to pass to the limit at the
tangent level that leads to \emph{tangent measures,} a concept prior to
tangent distributions introduced by D. Preiss \cite{Preiss}. There, starting
from a measure $\nu $ in the set $\mathcal{M(}\mathbb{R}^{n})$ of Radon
measures on $\mathbb{R}^{n},$ he considers unrestricted zoomings $\nu _{x,t}$
of $\nu $ at $x$ by homotheties $T_{x,t}$ as above. Instead of performing an
averaging procedure, Preiss considers non-null and locally finite limits, in
the vague topology of $\mathcal{M(}\mathbb{R}^{n}),$ of sequences%
\begin{equation*}
\left\{ c_{n}\nu _{x,t_{n}}\right\} \quad \text{with}\quad t_{n}\overset{n\rightarrow \infty }{\rightarrow }\infty \quad \text{and }\quad c_{n}>0.
\end{equation*}%
Such limit points are called \textit{tangent measures of }$\nu $ at $x,$ and 
$Tan(\nu ,x)$ denotes the set of all such limits. \newline
In our approach, following C. Bandt \cite{BAN1}, the measures $\nu
_{x,t_{n}} $ are restricted and normalised zoomings, but the zoomings are
through general expanding similitudes, rather than only homotheties.
\end{remark}

Let $\mathcal{I}_{n}$ be the group of isometries of $\mathbb{R}^{n}.$ We may
define, in the set $\mathcal{M}(\mathbb{R}^{n}),$ the equivalence
relationship 
\begin{equation}
\alpha \cong \beta \Leftrightarrow \text{there is a }g\in \mathcal{I}_{n}%
\text{ and a }\lambda >0\text{ such that }\beta =\lambda \left( g_{\sharp
}(\alpha \right) ),  \label{equivalent measures}
\end{equation}%
where $g_{\sharp }(\alpha )$ is the \textit{image measure} of $\alpha $
under $g,$ i.e. $g_{\sharp }(\alpha )(A)=\alpha (g^{-1}(A))$ for $\alpha $%
-measurable $A\subset \mathbb{R}^{n}.$ Thus, we identify two measures if
they are equal up to an isometry (see, for instance, \cite{BAN}, where
equivalent measures up to isometries are identified in the construction of
tangent measures), and we also identify all measures in the half-straight
line $\left\{ \lambda \alpha :\lambda >0,\text{ }\alpha \in \mathcal{M}(%
\mathbb{R}^{n})\right\} .$ For $\alpha \in \mathcal{M(}\mathbb{R}^{n}),\,$
let $\widetilde{\alpha }$ denote the equivalence class in $\mathcal{M(}%
\mathbb{R}^{n})/\cong $ to which $\alpha $ belongs, i.e. 
\begin{equation}
\widetilde{\alpha }=\left\{ \beta \in \mathcal{M(}\mathbb{R}^{n}):\beta
\cong \alpha \right\}  \label{7}
\end{equation}%
Given a measure $\nu \in \mathcal{M(}\mathbb{R}^{n}),$ we now consider the
zoomings $\nu _{x,t_{n}}$ be of the form $\left( g_{n}\right) _{\sharp }$ $%
\nu \lfloor _{B(x,dt_{n}^{-1})}$ where $g_{n}$ is a similitude of
contraction ratio $t_{n},$ $d\leq 1,$ and $x\in spt(\nu )$ (see (\ref{10})).
We define the quotient space $\widetilde{\mathcal{M}}(\mathbb{R}^{n})$ and
the set of \textit{tangent equivalence classes of measures, }$\widetilde{Tan}%
(\nu ,x),$ by
\begin{eqnarray}
\widetilde{\mathcal{M}}(\mathbb{R}^{n}) &=&\left\{ \widetilde{\alpha }%
:\alpha \in \mathcal{M(}\mathbb{R}^{n})\right\}   \label{8} \\
\widetilde{Tan}(\nu ,x) &=&\left\{ \widetilde{\alpha }:\text{there is a
sequence }c_{n}\nu _{x,t_{n}}\xrightarrow[n \to \infty]{w}\alpha ,\text{with 
}t_{n}\rightarrow \infty ,\text{ }\alpha \neq 0\text{ and }\alpha \in 
\mathcal{M(}\mathbb{R}^{n})\right\} ,  \label{9}
\end{eqnarray}%
where $\overset{w}{\rightarrow }$ denotes the weak convergence of measures
on $\mathcal{M}(\mathbb{R}^{n}).$ 

It turns out that, in the course of our
research, the case in which the convergence of the magnifications occurs in
the strong topology of measures in $\mathcal{M}(\mathbb{R}^{n})$ is relevant
(see Sec. \ref{Typical balls} below for a discussion of this result). We
shall write $\widetilde{Tan}^{st}(\nu ,x)$ for the set of equivalence
classes, w.r.t. $\cong ,$ of such strong limits.

\begin{remark}
In our definition (\ref{9}) any two zoomings, $\beta =\left( g_{n}\right)
_{\sharp }$ $\nu \lfloor _{B(x,dt_{n}^{-1})}$ and $\beta ^{\prime }=\left(
h_{n}\right) _{\sharp }\nu \lfloor _{B(x,dt_{n}^{-1})}$ of a given spherical
neighbourhood $B(x,dt_{n}^{-1})$ are considered as valid steps in the
construction of a tangent limiting measure $\alpha ,$ where $g_{n},h_{n}$
are different similitudes. This can be considered as the identification of $%
\beta $ and $\beta ^{\prime }$ as equivalent zoomings. Notice that $\beta
^{\prime }=\left( g_{n}^{-1}\circ h_{n}\right) _{\sharp }\beta $ and that $%
g_{n}^{-1}\circ h_{n}$ is an isometry. Thus, the equivalence relationship (%
\ref{equivalent measures}) and the definition in (\ref{9}) are consistent.
\end{remark}

In contrast to the enlightening results obtained in \cite{Graf}, \cite{BAN}
and \cite{ARB} on the uniform scaling scenery property of self-similar
measures, to the best of our knowledge, the members of $Tan(\nu ,x)$ for $%
\nu \in \mathcal{M}_{\mathcal{S}}(E)$ remain unknown$.$ Several natural
issues arise here: What is the relationship between $\Phi _{x}(\nu )$ and $%
Tan(\nu ,x)?$ What do the measures in $Tan(\nu ,x)$ look like? Do they
display some uniform property? As for the first question, see Proposition 1
in \cite{Mortens}. Below, we give a partial answer to the second and third
questions for measures in $\mathcal{M}_{\mathcal{S}}(E)$ (see (\ref%
{uniformtangentmeasures}) and Theorem \ref{quasitangent}).

\subsection{Typical balls \label{Typical balls}}

A distinguished class of neighbourhoods of $E$, in terms of which our
results are expressed, is the class of \textit{typical balls}.

\begin{definition}
\label{typicalball} A ball $B(x,d)$ is said to be typical if $x\in E$ and $%
B(x,d)\subset\mathcal{O},$ where $\mathcal{O}$ is some feasible open set. We
shall write $\mathcal{B}$ for the set of typical balls.
\end{definition}

The family of typical balls is invariant under the semigroup $G$ generated
by $\Psi $ (see Sec. \ref{Notation}), since, for $f\in G,$ it follows from $%
f(\mathcal{O)\subset O}$ that $f\mathcal{(B)\subset B}$ holds. Consider now
the set of \textit{typical spherical }$\mathcal{B}$-\textit{measures,} 
\begin{equation}
\mathcal{M}_{\mathcal{S}}\mathcal{(B)}:=\left\{ \alpha \lfloor B:B\in 
\mathcal{B},\ \alpha \in \mathcal{M}_{S}(E)\right\} .
\label{typical b-measures}
\end{equation}%
It is well known \cite{HUTCH} that, for any $x\in E,$ the set $\left\{
f(x):f\in G\right\} $ is dense in $E,$ so the balls in $\mathcal{B}$ are
typical in the sense that, if $B\in \mathcal{B},$ then similar copies of $B$
are densely spread over $E$ at small scales by the action of $G.$ These
copies are a countable set of balls. As Theorem \ref{quasitangent} shows,
the measures in $\mathcal{M}_{\mathcal{S}}\mathcal{(B)}$ are also typical in
a deeper sense since, for any $f\in G,$ $B\in \mathcal{B}$ and $\alpha \in 
\mathcal{M}_{\mathcal{S}}\mathcal{(B)},$ the equality $\alpha \lfloor
_{f(B)}=p_{f}f_{_{\sharp }}(\alpha \lfloor _{B})$ holds for a certain
constant $p_{f}<1$ associated with $f.$ This means that the images of
typical balls are identical copies, up to the constant $p_{f},$ to the
original ones not only as subsets, but also from the point of view of any
property expressible in terms of self-similar measures. Moreover, in Theorem %
\ref{quasitangent} it is shown that, for any typical ball $B(x,d),$ for any
measure $\alpha \in \mathcal{M}_{\mathcal{S}}\mathcal{(}E\mathcal{)}$ and
for all points $y$ in a set $\widehat{E}$ with full $\alpha $-measure, there
is a sequence of balls $\left\{ B(y,d_{k})\right\} $ with $d_{k}\rightarrow
0,$ a sequence $\left\{ f_{k}\right\} $ of similitudes in $G$ and constants $%
p_{f_{k}}^{-1}\rightarrow \infty ,$ such that 
\begin{equation}
p_{f_{k}}^{-1}\left( f_{k}^{-1}\right) _{\sharp }\left( \alpha \lfloor
_{(B(y,d_{k})}\right) \xrightarrow[k\to \infty]{st}\alpha \lfloor _{B(x,d)},
\label{reescaled}
\end{equation}%
where the convergence in (\ref{reescaled}) is in the sense of the strong
topology of Radon measures.

Theorem \ref{quasitangent} also states that, for all $x\in \widehat{E}$ and $%
\alpha \in \mathcal{M}_{\mathcal{S}}\mathcal{(B)},$ 
\begin{equation}
\widetilde{\mathcal{M}}_{\mathcal{S}}\mathcal{(B)}\subset \widetilde{Tan}%
^{st}(\alpha ,x)  \label{uniformtangentmeasures}
\end{equation}%
holds, where 
\begin{equation}
\widetilde{\mathcal{M}}_{\mathcal{S}}\mathcal{(B)=}\left\{ \widetilde{\alpha 
}:\alpha \in \mathcal{M}_{\mathcal{S}}\mathcal{(B)}\right\} ,
\label{typical measures}
\end{equation}%
(see (\ref{7}) for the notation $\widetilde{\alpha }).$

The results above imply that the use of general zooming similitudes, grants
the strong convergence of the zoomings to the tangent measures, whereas in
the ordinary spaces of tangent measures, where only homotheties are allowed,
convergence can only be ensured in a weak topology sense. See Sec. \ref%
{Identifications of tangent measures} below for further details on
identifications and topologies of measures.

\begin{remark}
Putting the results in Sec. \ref{complexity}, described in the first
paragraph of this section, together with (\ref{uniformtangentmeasures}), we
see that the self-similar scenery at $x\in E$ depends on $x$ on large
scales, meaning that there is a broad variety of balls $B(x,d)$ for varying $%
x$ that, moreover, also vary with $d$ for fixed $x.$ Additionally, on a
tangent scale, for each $\alpha \in \mathcal{M}_{\mathcal{S}}\mathcal{(B)}$
and each $x\in \widehat{E},$ each typical class of measures in $\widetilde{%
\mathcal{M}}_{\mathcal{S}}\mathcal{(B)}$ is a feasible outcome of the
zooming process of $\alpha $ at $x,$ so there is a wide variety of limiting
measures in $\widetilde{Tan}^{st}(\alpha ,x),$ $x\in \widehat{E}.$ The
uniformity of the self-similar setting emerges here in the fact that the
inclusion $\widetilde{\mathcal{M}}_{\mathcal{S}}\mathcal{(B)}\subset 
\widetilde{Tan}^{st}(\alpha ,x)$ stands true for any $x\in \widehat{E},$ so
all the points in $\widehat{E}$ share the set $\widetilde{\mathcal{M}}_{%
\mathcal{S}}\mathcal{(B)}$ of tangent measures.
\end{remark}

\subsection{Spectrum of local densities of a self-similar set: the
Sierpinski gasket case\label{spectrumintrodution}}

The relevance of the typical balls is stressed by the connection between
typical balls and the spectrum of densities, which in turn determines some
basic geometric features of $E.$

Let $\alpha \in \mathcal{M(}\mathbb{R}^{n}),$ $0\leq s\leq \infty $ and $%
x\in \mathbb{R}^{n}.$ The \textit{upper and lower spherical }$s$-densities
of $\alpha $ at $x$ are defined, respectively, by%
\begin{align}
\overline{\theta }_{\alpha }^{s}(x)& =\limsup_{d\rightarrow 0}\text{ }\theta
_{\alpha }^{s}(x,d),  \label{upper-density} \\
\underline{\theta }_{\alpha }^{s}(x)& =\liminf_{d\rightarrow 0}\text{ }%
\theta _{\alpha }^{s}(x,d),  \label{lower-density}
\end{align}%
where the $s$-density of the ball $B(x,d),$ $\theta _{\alpha }^{s}(x,d),$ is
given by 
\begin{equation*}
\theta _{\alpha }^{s}(x,d)=\frac{\alpha (B(x,d))}{(2d)^{s}}.
\end{equation*}%
\qquad

Here the zooming process is summarised in only two scalars, $%
\eqref{upper-density}$ and $\eqref{lower-density}$. If $\overline{\theta }%
_{\alpha }^{s}(x)=\underline{\theta }_{\alpha }^{s}(x),$ then we write $%
\theta _{\alpha }^{s}(x)$ for the common value and call it $s$-density of $%
\alpha $ at $x$. Densities and their connections to their underlying
measures have been studied extensively in the context of geometric measure
theory. A major contribution from Marstrand (Marstrand's theorem, \cite%
{MARST}) asserts that, in the Euclidean setting, if the $s$-density $\theta
_{\alpha }^{s}(x)$ exists in a set with a finite and positive $\alpha $%
-measure with $\alpha \in \mathcal{M(}\mathbb{R}^{n}),$ then $s$ is an
integer.

The widest class of subsets of Euclidean spaces that are $s$-sets (i.e. sets
with a finite and positive $\alpha $-measure) is either the class of
self-similar sets that satisfy the OSC, with $s$ being their \textit{%
similarity dimension }(see \eqref{Notation}), or some variations of it, like
the Mauldin and Williams graph-directed constructions, cf. \cite{MW}, and
controlled Moran constructions, cf. \cite{MoP}. Here, we are interested in
the case in which the similarity dimension $s$ is not an integer and, by
Marstrand's theorem described above, $\underline{\theta }_{\alpha }^{s}(x)$
and $\overline{\theta }_{\alpha }^{s}(x)$ do not coincide in subsets with a
positive $\alpha $-measure. This leads to the following definition of 
\textit{asymptotic spectrum} of densities of a given measure $\alpha $ at a
point and, more in general, in a subset of points.

\begin{definition}
Given a subset $A\subset\mathbb{R}^{n},$ we define the \emph{asymptotic
spectrum of (non-logarithmic) spherical $s$-densities,} $Spec(\alpha,A),$
for a locally finite measure $\alpha$ by 
\begin{equation}
Spec(\alpha,A)=\left\{
\lim_{k\rightarrow\infty}\theta_{\alpha}^{s}(x,d_{k}):x\in A\text{ and }%
\lim_{k\rightarrow\infty}d_{k}=0\right\} .  \label{Spec}
\end{equation}
\end{definition}

We insert the non-logarithmic epithet above because there is a ample
literature on the so-called \textit{multifractal spectrum }of logarithmic
spherical densities. This literature also focuses on the limiting behaviour
of $\alpha $ on small balls, but the interest is in the upper and lower
limits of the quotients $\frac{\log \alpha (B(x,d))}{\log d}$ when $%
d\rightarrow 0$ (for $x\in E)$ and, in particular, in the fractal dimension
of both the ($\alpha $-null) sets where these limits exist and take
particular values \cite{HARTE} and the sets of \textit{divergence points}
(see \cite{CAJAR}, \cite{COLEBROOK}, \cite{LI}) where the limits do not
coincide. Much less is known about the behaviour of non-logarithmic
densities, and the research in this paper can be considered a preliminary
step in that direction.

In particular, in Sec. \ref{section spectrum}, Theorem~\ref{Spectrum}, we
present the knowledge to date about the spectrum of non-logarithmic $\alpha $%
-densities, $\alpha \in \mathcal{M}^{s}\lfloor _{E},$ of self-similar sets $%
E $ that satisfy the OSC. In particular, we show that $Spec(\alpha ,x)$ is
contained in the closed interval $\left[ \frac{\alpha (E)}{P^{s}(E)},\frac{%
\alpha (E)}{C^{s}(E)}\right] $ for all $x$ in a subset $\widehat{E}$ of $E$
with a full $\alpha $-measure. There arises a natural class of self-similar
sets with nice properties, the $\alpha $-\textit{exact self-similar sets}
(see notation in ~\ref{exact}), which are sets for which the endpoints of
such interval belong to $Spec(\alpha ,x),$ $x\in \widehat{E}.$ Whereas the
results for general self-similar sets with OSC presented in Sec. \ref%
{section spectrum} are of a qualitative nature, in Sec. \ref%
{Sierpinski_gasket} we shall focus on our prime example of $\alpha $-exact
self-similar set, the Sierpinski gasket $S,$ and exploit its regularity to
accurately approximate the range of values taken by its spectrum, which is
the content of Theorem~\ref{Sierpinskispectrum}. Moreover, we give a full
characterisation of the spectrum of all the points in $S,$ which is given by
the union of two closed intervals of positive length, namely, 
\begin{equation*}
Spec(\alpha ,S)=\left[ \alpha (S)\underline{\theta }_{\mu
}^{s}(z_{0}),\alpha (S)\overline{\theta }_{\mu }^{s}(z_{0})\right] \cup %
\left[ \frac{\alpha (S)}{P^{s}(S)},\frac{\alpha (S)}{C^{s}(S)}\right] ,\text{
}\alpha \in \mathcal{M}^{s}\lfloor _{S},
\end{equation*}%
where $z_{0}:=(0,0).$ Using the numerical approximations of $\underline{%
\theta }_{\mu }^{s}(z_{0}),$ $\overline{\theta }_{\mu }^{s}(z_{0})$ obtained
in Sec. \ref{Sierpinski_gasket} and of $P^{s}(S)$ and $C^{s}(S)$ obtained in 
\cite{LLMM2} and \cite{LLMM3}, we can also show that these two intervals are
disjointed. In the case that $\alpha \in \{\mu ,P^{^{s}}\lfloor
_{S},C^{s}\lfloor _{S}\},$ we have numerical estimations of these two
disjointed intervals. The Sierpinski gasket is, as far as we know, the first
connected self-similar with non-integer dimension for which the entire
spectrum has been computed.

\section{Notation and preliminaries\label{Notation}}

The self-similar set $E$ given in (\ref{1}) can be parametrised as $%
E=\left\{ \pi (i):i\in \Sigma \right\} $ with parameter space $\Sigma
:=M^{\infty }$ and \textit{geometric projection mapping} $\pi :\Sigma
\rightarrow E$ given by $\pi (i)=\cap _{k=1}^{\infty }f_{i(k)}E,\ $where $%
i(k)$ denotes the curtailment $i_{1}\dots i_{k}\in M^{k}$ of $%
i=i_{1}i_{2}\dots \in \Sigma $ and $f_{i_{1}\dots i_{k}}=f_{i_{1}}\circ
f_{i_{2}}\circ f_{i_{3}}\circ ...f_{i_{k}}$. We adopt the convention $%
M^{0}=\varnothing $ and write $M^{\ast }=\cup _{k=0}^{\infty }M^{k}$ for the
set of words of finite length. Expressed in this notation, the semigroup
generated by $\Psi $ can be written as $G=\left\{ f_{i}:i\in M^{\ast
}\right\} .$

For any $i\in M^{\ast },$ we denote by $E_{i}$ the cylinder sets $f_{i}(E),$
and if $i\in M^{0},$ then $f_{i}(E):=E.$ The sets $E_{i}$ are called $k$%
-cylinders if $i\in M^{k}.$ We also shorten the notation $f_{i}(A)$ to $%
A_{i} $ for a general set $A\subset \mathbb{R}^{n}.$ We write $%
r_{i}:=r_{i_{1}}r_{i_{2}}\dots r_{i_{k}}$ for the contraction ratio of the
similitude $f_{i}.$

Moreover, $\sigma \ :\Sigma \rightarrow \Sigma \ $ shall stand for the 
\textit{shift} map given by $\sigma (i_{1}i_{2}i_{3}\dots
)=i_{2}i_{3}i_{4}\dots $ The code shift can be projected (as a
correspondence) onto $E,$ yielding the \textit{geometric shift}%
\begin{equation}
\mathcal{T}(x):=\pi \circ \sigma \circ \pi ^{-1}(x),  \label{shift}
\end{equation}%
$x\in E.$ The \textit{shift orbit} of $x\in E$ is given by $\left\{ \mathcal{%
T}^{k}(x):k\in \mathbb{N}\right\} .$

\begin{remark}
\label{Geometric shift} Observe that $x\in\mathcal{T}^{k}(A)$ if and only if 
$f_{i}(x)\in A$ for some $i\in M^{k}.$
\end{remark}

\subsection{Self-similar measures\label{subsection self-similar measures}}

Let $\mathcal{P}(\mathbb{R}^{n})$ be the space of compactly supported
probability Borel measures on $\mathbb{R}^{n},$ let $\mathbf{p}%
=(p_{0},...,p_{m-1})\in\mathbb{R}^{m}$ be a probability vector and let $%
\mathbf{M}_{\mathbf{p}}\mathbf{:}$ $\mathcal{P}(\mathbb{R}^{n})\rightarrow 
\mathcal{P}(\mathbb{R}^{n})$ be the Markov operator defined by

\begin{equation*}
\mathbf{M}_{\mathbf{p}}(\alpha )=\sum_{i=0}^{m-1}p_{i}\alpha \circ
f_{i}^{-1},\text{ }\alpha \in \mathcal{P}(\mathbb{R}^{n}).
\end{equation*}%
The unique fixed point of the contractive operator $\mathbf{M}_{\mathbf{p}}$
is called the \textit{self-similar measure }$\mu _{\mathbf{p}};$ that is, 
\begin{equation}
\mu _{\mathbf{p}}=\sum_{i\in M}p_{i}\mu _{\mathbf{p}}\circ f_{i}^{-1}.
\label{invariant3}
\end{equation}%
Moreover, 
\begin{equation}
\mathbf{M}_{\mathbf{p}}^{k}(\alpha )=\sum_{i\in M^{k}}p_{i}\alpha \circ
f_{i}^{-1}\underset{k\rightarrow \infty }{\xrightarrow{w}}\mu _{\mathbf{p}}
\label{markov}
\end{equation}%
for any $\alpha \in \mathcal{P}(\mathbb{R}^{n}),$ where, for $i\in M^{k},$ $%
p_{i}:=$ $p_{i_{1}}\cdot \cdot \cdot p_{i_{k}}.$ Here $\mathbf{M}_{\mathbf{p}%
}^{k}$ is the $k$-th iterate of $\mathbf{M}_{\mathbf{p}}$ (see \cite{HUTCH}
and \cite{BARNS} for further details). Set 
\begin{equation}
\mathcal{M}_{\mathcal{S}}(E):=\left\{ \mu _{\mathbf{p}}:\text{ }%
\sum_{i=0}^{m-1}p_{i}=1,\ p_{i}>0,\text{ }i=0,...,m-1\right\} .
\label{self-similar-measure}
\end{equation}%
For $\mathbf{p}_{s}:=(r_{0}^{s},...,r_{m-1}^{s}),$ where $s$ is the
similarity dimension of $E$ (recall that $r_{i}$ is the contraction constant
of the similarity $f_{i},$ $i\in M),$ the measure $\mu _{\mathbf{p}_{s}}$ is
called the\textit{\ natural probability measure} on $E.$ Furthermore, if $%
\alpha \in \mathcal{M}^{s}\lfloor _{E}$ (see \eqref{restrictedmetric} for
notation), then%
\begin{equation}
\mu :=\mu _{\mathbf{p}_{s}}=\frac{\alpha }{\alpha (E)}  \label{muvshaus}
\end{equation}%
(see \cite{LLM3}).

Notice that, whereas the measures in $\mathcal{M}^{s}$ (see (\ref{MS}) for
notation) convey an strong geometrical meaning, the measures $\mu _{\mathbf{p%
}}$ in $\mathcal{M}_{\mathcal{S}}(E)$ do not. They are concentrated in dense
subsets of $E,$ $E_{\mathbf{p}},$ whose dimension is given by $\dim (E_{%
\mathbf{p}})=$ $s_{\mathbf{p}}:=\frac{\sum_{i=0}^{m-1}p_{i}\log p_{i}}{%
\sum_{i=0}^{m-1}p_{i}\log r_{i}},$ but the measure $\mu _{\mathbf{p}}$ is
singular w.r.t. the measures $\mathcal{H}^{s_{\mathbf{p}}}$ and $P^{s_{%
\mathbf{p}}}$ (see \cite{Mo3} and \cite{Mo4}).

\subsection{Metric measures\label{sectionmetricmeasures}}

We now briefly recall metric measures. They are the classical tools for
analysing the geometric properties of subsets of $\mathbb{R}^{n}.$

The \textit{Hausdorff centred measure}, $C^{s}(A),$ of a subset $A\subset 
\mathbb{R}^{n},$ was defined by Saint Raymond and Tricot \cite{SRT} in a
two-step process. First, the premeasure $C_{0}^{s}(A)$ is defined for any $%
s>0$ by 
\begin{equation}
C_{0}^{s}(A)=\lim_{\delta \rightarrow 0}\inf \left\{
\sum\limits_{i=1}^{\infty }(2d_{i})^{s}\ :\ 2d_{i}\leq \delta ,\text{ }%
i=1,2,\dots \right\} ,  \label{premeasure}
\end{equation}%
where the infimum is taken over all coverings, $\left\{
B(x_{i},d_{i})\right\} _{i\in \mathbb{N}^{+}},$ of $A$ by closed balls $%
B(x_{i},d_{i})$ centred at points $x_{i}\in A.$ Then, the \textit{centred
Hausdorff }$s$\textit{-dimensional measure }is defined by 
\begin{equation*}
C^{s}(A)=\sup \left\{ C_{0}^{s}(F):F\subset A,\ F\text{ closed}\right\} .
\end{equation*}

The second step in the definition of $C^{s}(A)$ is due to the lack of
monotonicity of $C_{0}^{s}$ (see \cite{TO2} and \cite[Example~4]{LLM2}).
However, in \cite{LLM2}, it was shown that the second step can be omitted
when restricting oneself to self-similar sets with OSC.

With regard to metric measures based on packings, the standard packing
measure $P^{s}$ (see \cite{SRT} and \cite{SU}) is also defined in a two-step
process, 
\begin{equation*}
P_{0}^{s}(A)=\lim_{\delta \rightarrow 0}\sup \left\{
\sum\limits_{i=1}^{\infty }(2d_{i})^{s}:2d_{i}\leq \delta ,\text{ }%
i=1,2,\dots \right\} ,
\end{equation*}%
where the supremum is taken over all \textit{packings} $\left\{
B(x_{i},d_{i})\right\} _{i\in \mathbb{N}^{+}},$ with $x_{i}\in A$ for all $%
i, $ and $B(x_{i},d_{i})\cap B(x_{j},d_{j})=\varnothing $ for $i\neq j.$
Then, 
\begin{equation*}
P^{s}(A)=\inf \left\{ \sum\limits_{i=1}^{\infty }P_{0}^{s}(F_{i})\right\} ,
\end{equation*}%
where the infimum is taken over all coverings $\left\{ F_{i}\right\} _{i\in 
\mathbb{N}^{+}}$ of $A$ by closed sets $F_{i}$ (cf.\cite{T0}). In \cite{FENG}%
, it was proved that if $A$ is a compact set with $P_{0}^{s}(A)<\infty ,$
then $P^{s}(A)=P_{0}^{s}(A),$ so this simplification applies to any compact
subset of a self-similar set with OSC.

The \textit{spherical }$s$\textit{-dimensional Hausdorff measure, }$\mathcal{%
H}_{Sph}^{s}(A),$ is obtained by removing in (\ref{premeasure}) the
requirement that the balls are centred at points of $A$. The classical $s$%
\textit{-dimensional Hausdorff measure, }$\mathcal{H}^{s}(A),$ results if
coverings of $A$ by arbitrary subsets, $\left\{ U_{i}\right\} _{i\in \mathbb{%
\ N}^{+}},$ are considered and $2d_{i}$ is replaced in (\ref{premeasure})
with the diameter of $U_{i},$ $\left\vert U_{i}\right\vert $ (see \cite{Haus}
and \cite{MAT2}). No second step is required for these last two measures.

The packing and the centred Hausdorff measures have a much simpler
expression when dealing with self-similar sets $E$ that satisfy the OSC as
the browse for optimal packings or coverings can be reduced to the search
for optimal density balls within the class of typical balls, $\mathcal{B}$
(see Definition~\ref{typicalball}). In particular, for any self-similar $E$
that satisfies the OSC and with similarity dimension $s,$ it is known (see 
\cite{Mo1}) that 
\begin{equation}
P^{s}(E)=\left( \inf \left\{ \theta _{\mu }^{s}(x,d):B(x,d)\in \mathcal{B}%
\right\} \right) ^{-1},  \label{packing}
\end{equation}%
and, Lemma (\ref{centeredtypballs}) of Sec. \ref{Asymptotic spectra} implies
that 
\begin{equation}
C^{s}(E)=\left( \sup \left\{ \theta _{\mu }^{s}(x,d):B(x,d)\in \mathcal{B}%
\right\} \right) ^{-1}.  \label{centrada}
\end{equation}

\section{Local structure and typical balls\label{section spectrum}}

Now we shall study the local structure of a self-similar set $E$ that
satisfies the OSC for a feasible open set $\mathcal{O}$ through the study of
the scenery flow of $\alpha \in \mathcal{M}_{\mathcal{S}}(E)$ at a.e. $x\in
E,$ and through the characterisation of the spectrum of the spherical $s$%
-densities of measures in $\mathcal{M}^{s}\lfloor _{E}$ (Sec. \ref%
{Asymptotic spectra}), a limiting set that helps to summarise the structure
in the neighbourhood of a point (Sec. \ref{spectrumintrodution}).

\subsection{Scenery flow and tangent measures\label{Flow scenery and tangent
measures}}

We start by giving details on the construction of $\widetilde{Tan}(\nu ,x)$
for $\nu \in \mathcal{M}_{\mathcal{S}}(E)$ and $x\in E$ (see \ref%
{self-similar-measure} for notation).

\subsubsection{Tangent measures, identifications and topologies.\label%
{Identifications of tangent measures}}

Recall that the construction of the sets $\widetilde{Tan}(\nu ,x)$ and $%
\widetilde{Tan}^{st}(\nu ,x)$ employs the identification, in the set $%
\mathcal{M}(\mathbb{R}^{n})$, of those measures that are equal up to
isometries or mutual multiples (see (\ref{equivalent measures}), (\ref{7}), (%
\ref{8}) and (\ref{9}) for notation). We now examine the construction of the
spaces of equivalence classes of tangent measures above in more detail.

For $\nu \in \mathcal{M}(\mathbb{R}^{n})$ and $x\in spt(\nu ),$ we first
consider$\ $sequences $\{c_{n}\nu _{x,t_{n}}\}_{n=0}^{\infty },$ where for
every $n\in \mathbb{N},$ $c_{n}>0,$ 
\begin{equation}
\nu _{x,t_{n}}:=\frac{1}{\nu (B(x,dt_{n}^{-1}))}\left( g_{x,t_{n}}\right)
_{\sharp }\nu \lfloor _{B(x,dt_{n}^{-1})},  \label{10}
\end{equation}%
$d\leq 1$ and $g_{x,t_{n}}$ is some similarity with expanding ratio $t_{n}$
that maps the ball $B(x,t_{n}^{-1})$ onto the ball $B(z_{n},1),$ with $%
z_{n}=g_{x,t_{n}}(x),$ so each $\nu _{x,t_{n}}$ is a probability measure
supported on $B(z_{n},d).$ Then, $\widetilde{Tan}(\nu ,x)$ and $\widetilde{%
Tan}^{st}(\nu ,x)$ consist of the equivalence classes of non-null weak and
strong limits, respectively, as $t_{n}\rightarrow \infty ,$ of such
sequences $\{c_{n}\nu _{x,t_{n}}\}_{n=0}^{\infty }$ (see (\ref{9})). Lemma~%
\ref{classes} shows that the elements in $\widetilde{Tan}(\nu ,x)$ and $%
\widetilde{Tan}^{st}(\nu ,x)$ do not depend on either the sequence of
constants $c_{n}$ or the particular elements chosen in the equivalence
classes $\widetilde{\nu _{x,t_{n}}}$ as long as the convergence of these
elements is guaranteed.

\begin{remark}
The unit ball $D$ does not play any essential role in our definition of
tangent measures in the quotient space $\widetilde{\mathcal{M}}(\mathbb{R}%
^{n}).$ In the opposite direction (second approach) we may, in a more akin
way to the classical approach, require the similarities $g_{x,t_{n}}$ to map 
$B(x,t_{n}^{-1})$ onto $B(0,1),$ and then define $Tan_{D}(\nu ,x)$ and $%
Tan_{D}^{st}(\nu ,x)$ as weak and strong limits in $\mathcal{M}(D),$
respectively, of sequences of such measures $\nu _{x,t_{n}},$ and $%
\widetilde{Tan}(\nu ,x),$ $\widetilde{Tan}^{st}(\nu ,x)$ as the sets of
equivalence classes of measures in $Tan_{D}(\nu ,x)$ and $Tan_{D}^{st}(\nu
,x),$ respectively. \newline
This second method gives spaces of tangent equivalence classes which are
particular cases of these in our primary approach. Are these equivalent
methods? In order to answer this question, let a sequence $\{c_{n}\nu
_{x,t_{n}}\}_{n=0}^{\infty },$ $\ $as in (\ref{10}), converge to a non-null
Radon measure $\alpha .$ By Lemma \ref{classes} we may assume $c_{n}=1$ for
all $n\in 
\mathbb{N}
^{+}.$ Since the measures $\nu _{x,t_{n}}$ are supported on balls $%
B(z_{n},d) $ with $d\leq 1$ (see Theorem \ref{quasitangent} (i)), the
limiting measure $\alpha $ must also be supported on a ball $B(z,d)$ with $%
z_{n}\underset{n\rightarrow \infty }{\longrightarrow }z.$ Each measure $\nu
_{x,t_{n}}^{\prime }=\left( \tau _{z_{n}}\right) _{\sharp }\nu _{x,t_{n}},$
\ where $\tau _{z_{n}}(y)=y-z_{n},$ is equivalent by translation to $\nu
_{x,t_{n}},$ and $\nu _{x,t_{n}}^{\prime }$ is supported on $D.$ It is easy
to see that $\nu _{x,t_{n}}\xrightarrow[n \to \infty]{w}\alpha $ implies
that $\nu _{x,t_{n}}^{\prime }\xrightarrow[n \to \infty]{w}\alpha ^{\prime
}=\left( \tau _{z}\right) _{\sharp }\alpha ,$ so $\alpha ^{\prime }$ is
equivalent to $\alpha $ and supported on $D.$ Thus, the second method gives
the same space $\widetilde{Tan}(\nu ,x)$ than our primary method. But $\nu
_{x,t_{n}}\xrightarrow[n \to \infty]{st}\alpha $ does not imply that $\nu
_{x,t_{n}}^{\prime }\xrightarrow[n \to \infty]{st}\alpha ^{\prime },$ so the
second method does not produce the same space $\widetilde{Tan}^{st}(\nu ,x)$
than our method. \newline
Observe that, if we let $\nu _{x,t_{n}}^{\prime }=\left( \tau _{z}\right)
_{\sharp }\nu _{x,t_{n}},$ then $\nu _{x,t_{n}}\xrightarrow[n \to \infty]{st}%
\alpha $ does imply $\nu _{x,t_{n}}^{\prime }\xrightarrow[n \to \infty]{st}%
\alpha ^{\prime }=\left( \tau _{z}\right) _{\sharp }\alpha .$ But now the
measure $\nu _{x,t_{n}}^{\prime }$ is supported on the ball $B(z_{n}-z,d)$
rather than on $D.$ This observation is useful because $D$ and all the balls 
$B(z_{n}-z,d)$ are contained in some ball $B(0,R)$ for $R$ large enough
(notice that $z_{n}$ is a convergent sequence of points), so the convergence 
$\nu _{x,t_{n}}\underset{n\rightarrow \infty }{\longrightarrow }\alpha $ (weak or strong) occurs in $\mathcal{M(}B(0,R)),$
and we can see that, if we consider vague convergence of measures, we do not
obtain anything new, since in the Polish space $B(0,R)$ both convergences
are equivalent (\cite{Mortens}, Appendix).
\end{remark}

\begin{lemma}
\label{classes} \hfill \break (i) The sequences $\{c_{n}\}_{n=0}^{\infty }$
in the construction of $\ \widetilde{Tan}(\nu ,x)$ and $\widetilde{Tan}%
^{st}(\nu ,x)$ can be taken to be $c_{n}=1,$ $n=0,1,2,...$ \newline
(ii) Let $\nu \in \mathcal{M}(\mathbb{R}^{n}),$ $x\in spt(\nu )$ and $\alpha
\in Tan(\nu ,x).$ Let $\{t_{n}\}_{n=0}^{\infty }\uparrow \infty $ be such
that $\{\nu _{x,t_{n}}\}_{n=0}^{\infty }$ $\xrightarrow[n \to \infty]{w}%
\alpha .$ Assume also that there is a sequence $\{f_{n}\}_{n=0}^{\infty }$ $%
\ $in the set of isometries $\mathcal{I}_{n}$ such that $\{\left(
f_{n}\right) _{\sharp }\nu _{x,t_{n}}\}_{n=0}^{\infty }%
\xrightarrow[n \to
\infty]{w}\alpha ^{\prime }.$ Then, there is $f$ $\in \mathcal{I}_{n}$ such
that $(f)_{\sharp }\alpha =\alpha ^{\prime }.$ The same is true if $\ $the
convergence holds in the topology of the strong convergence in $\mathcal{M}(%
\mathbb{R}^{n}).$
\end{lemma}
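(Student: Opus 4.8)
The plan is to prove the two parts separately, as they address independent normalisation issues.

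For part (i), the claim is that the scalars $c_n$ are redundant. First I would observe that, by the definition in (\ref{9}), having a non-null weak (or strong) limit $c_n \nu_{x,t_n} \xrightarrow{w} \alpha$ with $\alpha \neq 0$ forces the sequence $\{c_n\}$ to behave controllably. Since each $\nu_{x,t_n}$ is a probability measure (by construction in (\ref{10})), the total mass $c_n = (c_n\nu_{x,t_n})(\mathbb{R}^n)$ is just the total mass of the $n$-th term of the sequence. I would test the limit against a fixed continuous function (or use a nested-ball/lower-semicontinuity argument for the mass) to conclude that $c_n \to c := \alpha(\mathbb{R}^n) \in (0,\infty)$; the limit $c$ is finite because $\alpha$ is locally finite and supported on a bounded ball $B(z,d)$ (as noted in the preceding remark), and positive because $\alpha \neq 0$. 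Then $\nu_{x,t_n} = c_n^{-1}(c_n\nu_{x,t_n}) \xrightarrow{w} c^{-1}\alpha$, which is a non-null scalar multiple of $\alpha$, hence $\widetilde{c^{-1}\alpha} = \widetilde{\alpha}$ by (\ref{equivalent measures}). Therefore the same equivalence class is reached with all $c_n = 1$. The identical argument works in the strong topology, since $c_n \to c$ and strong convergence is preserved under multiplication by convergent scalars.

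For part (ii), the goal is to transfer an isometric reparametrisation across the limit. The setup gives $\nu_{x,t_n} \xrightarrow{w} \alpha$ and $(f_n)_\sharp \nu_{x,t_n} \xrightarrow{w} \alpha'$ for isometries $f_n \in \mathcal{I}_n$, and I must produce a single isometry $f$ with $f_\sharp \alpha = \alpha'$. The key first step is a compactness argument on the sequence $\{f_n\}$. Because $\alpha \neq 0$ and $\alpha' \neq 0$ are both supported on bounded balls, and the measures $\nu_{x,t_n}$ are supported on balls $B(z_n,d)$ with $z_n$ convergent, the isometries $f_n$ cannot send the (bounded) supports off to infinity without destroying the mass of the limit $\alpha'$; this pins the translational parts of the $f_n$ in a bounded region. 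Writing each isometry as $f_n = \tau_{b_n} \circ O_n$ with $O_n$ orthogonal and $b_n$ a translation vector, the orthogonal parts $O_n$ live in the compact group $O(n)$ and the $b_n$ stay bounded, so by passing to a subsequence I obtain $f_n \to f$ uniformly on compacta for some $f \in \mathcal{I}_n$.

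The second step is to pass to the limit: I claim $(f_n)_\sharp \nu_{x,t_n} \xrightarrow{w} f_\sharp \alpha$, which combined with the hypothesis $(f_n)_\sharp \nu_{x,t_n} \xrightarrow{w} \alpha'$ and uniqueness of weak limits yields $f_\sharp \alpha = \alpha'$, as required. To justify this convergence I would fix $\varphi \in C_c(\mathbb{R}^n)$ and estimate
\begin{equation*}
\left| \int \varphi \, d(f_n)_\sharp \nu_{x,t_n} - \int \varphi \, df_\sharp \alpha \right| \leq \left| \int (\varphi \circ f_n - \varphi \circ f)\, d\nu_{x,t_n} \right| + \left| \int \varphi \circ f \, d\nu_{x,t_n} - \int \varphi \circ f \, d\alpha \right|.
\end{equation*}
The second term tends to zero because $\varphi \circ f \in C_c(\mathbb{R}^n)$ and $\nu_{x,t_n} \xrightarrow{w} \alpha$. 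The first term tends to zero because $f_n \to f$ uniformly on the compact support of the relevant data, so $\varphi \circ f_n \to \varphi \circ f$ uniformly (using uniform continuity of $\varphi$), while the $\nu_{x,t_n}$ have uniformly bounded (indeed unit) mass. The main obstacle is the compactness step: I must rule out escape of mass, i.e. confirm that no subsequence of $\{f_n\}$ translates the uniformly bounded supports of $\nu_{x,t_n}$ out to infinity, for otherwise $\alpha'$ could not retain positive mass on a bounded set. The strong-topology version follows along the same lines, replacing the test against $C_c$ functions by the total-variation estimates and again invoking the uniform convergence $f_n \to f$ on the common bounded region containing all supports.
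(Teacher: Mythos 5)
Your treatment of part (i) and of the weak-topology case of part (ii) is correct and is essentially the paper's own argument. For (i), the paper merely notes that $\{c_{n}\}$ is bounded above and below and extracts a convergent subsequence, whereas you prove directly that $c_{n}\rightarrow \alpha (\mathbb{R}^{n})$ using the uniformly bounded supports; this is a harmless (slightly stronger) variant of the same normalisation idea. For (ii), your steps coincide with the paper's: decompose each isometry into an orthogonal part plus a translation, rule out escape of mass to bound the translations, use compactness of $O(n)$ to extract a convergent subsequence $f_{n_{k}}\rightarrow f$, and pass to the limit with the same two-term triangle-inequality estimate against a test function $\varphi $, concluding by uniqueness of weak limits.

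The one genuine flaw is your final sentence, the strong-topology case. You propose to rerun the argument \textquotedblleft replacing the test against $C_{c}$ functions by the total-variation estimates,\textquotedblright\ i.e. to prove $(f_{n})_{\sharp }\nu _{x,t_{n}}\rightarrow f_{\sharp }\alpha $ in the strong (setwise) sense. That step fails: setwise convergence is not preserved under pushforward by a sequence of maps that converges only uniformly, because $\mathbf{1}_{A}\circ f_{n}$ need not converge to $\mathbf{1}_{A}\circ f$ in any sense that can be integrated against the $\nu _{x,t_{n}}.$ The simplest illustration: $\delta _{0}$ converges setwise to $\delta _{0},$ and if $f_{n}$ is the translation by $1/n,$ then $(f_{n})_{\sharp }\delta _{0}=\delta _{1/n}$ does not converge setwise to $f_{\sharp }\delta _{0}=\delta _{0}$ (test on $A=\{0\}).$ The paper's remark preceding the lemma makes exactly this point: strong convergence is destroyed by the varying translations $\tau _{z_{n}}.$ The correct fix is trivial and is what the paper does: strong convergence implies weak convergence, so under the strong hypotheses both sequences converge weakly to $\alpha $ and $\alpha ^{\prime }$ respectively, and the weak case you have already proved yields $f_{\sharp }\alpha =\alpha ^{\prime }.$ No total-variation estimate is needed, and none is available.
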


\begin{proof}
\hfill \break (i) By definition, a weak limiting measure $\alpha $ as in (%
\ref{9}) is a non-null measure in $\mathcal{M}(\mathbb{R}^{n}).$ Therefore,
the sequence of constants $\left\{ c_{n}\right\} $ must be bounded above and
below by two positive and finite constants. We can choose a subsequence $%
\left\{ c_{n_{k}}\right\} _{k=0}^{\infty }$ that converges to a constant $c,$
and then the sequence $c\nu _{x,t_{n}}$ must converge to the weak
limit $\alpha .$ This gives $\nu _{x,t_{n}}\xrightarrow[n \to \infty]{w}%
c^{-1}\alpha ,$ which belongs to the same equivalence class in $\widetilde{%
Tan}(\nu ,x)$ as $\alpha .$ On the other hand, the non-null weak limits in $%
\mathcal{M}(\mathbb{R}^{n})$ of sequences $\{\nu _{x,t_{n}}\}_{n=0}^{\infty
} $ are particular cases of those of sequences $\left\{ c_{n}\nu
_{x,t_{n}}\right\} .$ This completes the proof of part (i) for weak limits$.$
The argument also holds true for strong limits. \newline
(ii) For any $n\in \mathbb{N}^{+},$ we can write $f_{n}(\cdot )=g_{n}(\cdot
)+a_{n},$ where $g_{n}$ is an orthogonal map and $a_{n}\in \mathbb{R}^{n}.$
Recall that $\nu _{x,t_{n}}$ is supported on $B(z_{n},d),$ so $\left(
g_{n}+a_{n}\right) _{\sharp }(\nu _{x,t_{n}})$ is supported on $%
a_{n}+B(z_{n},d)$ (\cite{MAT2}, Theorem 1.18), with $z_{n}\underset{%
n\rightarrow \infty }{\longrightarrow }z.$ This means that, if $\nu
_{x,t_{n}}^{\prime }:=\left( f_{n}\right) _{\sharp }\nu _{x,t_{n}}$
converges, in the weak topology of $\mathcal{M}(\mathbb{R}^{n}),$ to some
non-null measure $\alpha ^{\prime }$ in $\mathcal{M}(\mathbb{R}^{n}),$ the
sequence $a_{n}$ must be bounded, and then the sequence $\{f_{n}\}_{n=0}^{%
\infty }$ is also bounded in the supremum norm. Therefore, there is a
convergent subsequence, $\{f_{n_{k}}\}_{k=0}^{\infty },$ of $%
\{f_{n}\}_{n=0}^{\infty }.$ Let $f:=\lim_{k\rightarrow \infty }f_{n_{k}}.$
Since the sequence $\{\left( f_{n_{k}}\right) _{\sharp }(\nu
_{x,t_{n_{k}}})\}_{k=0}^{\infty }$ converges to $\alpha ^{\prime },$ we have
that 
\begin{equation}
\alpha ^{\prime }=\lim_{k\rightarrow \infty }f_{n_{k}\sharp }(\nu
_{x,t_{n_{k}}})=f_{\sharp }\alpha ,  \label{limit}
\end{equation}%
which proves that $\alpha ^{\prime }\cong \alpha .$ The second equality in (%
\ref{limit}) holds true because, for any $\varphi $ in the space $C_{0}(%
\mathbb{R}^{n})$ of continuous, compactly supported functions on $\mathbb{R}%
^{n}$ and for any $\varepsilon >0,$ there is $k_{0}>0$ such that for $k\geq
k_{0},$ we have 
\begin{equation*}
\left\Vert \varphi \circ f_{n_{k}}-\varphi \circ f\right\Vert \leq \frac{%
\varepsilon }{2},
\end{equation*}%
\begin{equation*}
\left\Vert \int \varphi \circ f\text{ }d(\nu _{x,t_{n_{k}}})-\int \varphi
\circ f\text{ }d\alpha \right\Vert \leq \frac{\varepsilon }{2},
\end{equation*}%
and then 
\begin{align*}
& \left\Vert \int \varphi \text{ }d\left( f_{n_{k}}{}_{\sharp }(\nu
_{x,t_{n_{k}}})\right) -\int \varphi \text{ }d(f_{\sharp }\alpha )\right\Vert
\\
& \leq \int \left\Vert \varphi \circ f_{n_{k}}-\varphi \circ f\text{ }%
\right\Vert d(\nu _{x,t_{n_{k}}})+\left\Vert \int \varphi \circ f\text{ }%
d(\nu _{x,t_{n_{k}}})-\int \varphi \circ f\text{ }d\alpha \right\Vert \leq
\varepsilon .
\end{align*}%
If $\{\nu _{x,t_{n}}^{\prime }\}_{n=0}^{\infty }$ converges to $\alpha
^{\prime }$ in the strong topology of $\mathcal{M}(D),$ then it also
converges in the weak topology and the argument above applies.
\end{proof}

\subsubsection{Scaling properties of typical balls and scenery flow}

We need some preliminary lemma and the following definition.

\begin{definition}
Given a measure $\alpha\in\mathcal{M}^{s}\lfloor_{E},$ two Euclidean balls $%
B(x,d)$ and $B(x^{\prime},d^{\prime})$ are said to be $\alpha$-density
equivalent if $\theta_{\alpha}^{s}(x,d)=\theta_{\alpha}^{s}(x^{\prime
},d^{\prime}).$
\end{definition}

We start with two elementary scaling properties of typical balls for
measures in $\mathcal{M}_{\mathcal{S}}(E)$ and in $\mathcal{M}%
^{s}\lfloor_{E}.$

\begin{lemma}
\label{scalingselfsimilar}Let $E$ be a self-similar set generated by the
system $\ \Psi =\left\{ f_{i}\right\} _{i\in M}$ of similarities of $\ 
\mathbb{R}^{n},$ with  $M=\left\{ 0,1,\dots ,m-1\right\}, $ and similarity
dimension $s.$ Let $\mathcal{O}$ be a feasible open set (for $\Psi )$ and
let $i\in M^{\ast }.$ Then \newline
(i) 
\begin{equation}
\mu _{\mathbf{p}}(f_{i}(A))=p_{i}\mu _{\mathbf{p}}(A),\text{ for }\mu _{%
\mathbf{p}}\in \mathcal{M}_{\mathcal{S}}(E)\quad \text{and }\mu _{\mathbf{p}}%
\text{-measurable }A\subset \mathcal{O},  \label{selfsimilarscaling}
\end{equation}%
(ii) 
\begin{equation}
\mu _{\mathbf{p}}(f_{i}^{-1}(C))=p_{i}^{-1}\mu _{\mathbf{p}}(C)\text{ for }%
\mu _{\mathbf{p}}\in \mathcal{M}_{\mathcal{S}}(E)\text{ and }\mu _{\mathbf{p}%
}\text{-measurable }C\subset \mathcal{O}_{i},
\label{inversescalingselfsimilar}
\end{equation}%
(iii)%
\begin{equation}
B(f_{i}(x),r_{i}d)\text{ is}\ \alpha \text{-density equivalent to}\ B(x,d)%
\text{ for }\alpha \in \mathcal{M}^{s}\lfloor _{E}\ \text{and }B(x,d)\subset 
\mathcal{O},  \label{directscaling}
\end{equation}%
(iv)%
\begin{equation}
f_{i}^{-1}(B(f_{i}(x),r_{i}d))\text{ is}\ \alpha \text{-density equivalent to%
}\ B(x,d)\ \text{for }\alpha \ \in \mathcal{M}^{s}\lfloor _{E}\ \text{and }\
B(x,d)\subset \mathcal{O}_{i}.  \label{inversescaling}
\end{equation}
\end{lemma}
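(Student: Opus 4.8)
The plan is to derive all four statements from part (i), and to reduce (i) in turn to a single measure-theoretic fact: that the self-similar measure charges neither $\partial\mathcal{O}$ nor, equivalently, the overlap set $E\setminus\mathcal{O}$. Indeed, once (i) is in hand, (ii) is purely algebraic: given $C\subset\mathcal{O}_i=f_i(\mathcal{O})$, write $C=f_i(A)$ with $A:=f_i^{-1}(C)\subset\mathcal{O}$, so (i) gives $\mu_{\mathbf{p}}(C)=p_i\,\mu_{\mathbf{p}}(A)=p_i\,\mu_{\mathbf{p}}(f_i^{-1}(C))$, which is \eqref{inversescalingselfsimilar}. For (iii) and (iv) I would specialise to the natural measure $\mu=\mu_{\mathbf{p}_s}$, for which $p_i=r_i^s$, and use that $\alpha=\alpha(E)\,\mu$ for every $\alpha\in\mathcal{M}^s\lfloor_E$ by \eqref{muvshaus}. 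Since $f_i$ is a similarity of ratio $r_i$, it maps $B(x,d)$ onto $B(f_i(x),r_id)$; hence (i) yields $\alpha(B(f_i(x),r_id))=\alpha(f_i(B(x,d)))=r_i^s\,\alpha(B(x,d))$ whenever $B(x,d)\subset\mathcal{O}$, and dividing by $(2r_id)^s=r_i^s(2d)^s$ gives exactly $\theta^s_\alpha(f_i(x),r_id)=\theta^s_\alpha(x,d)$, which is \eqref{directscaling}. Statement \eqref{inversescaling} follows symmetrically from (ii) (equivalently, by applying \eqref{directscaling} to the preimage ball, which lies in $\mathcal{O}$).

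For (i) itself I would first treat a single letter $i\in M$ and then pass to words $i\in M^{\ast}$ by induction. Applying the self-similarity identity \eqref{invariant3} in the form $\mu_{\mathbf{p}}(B)=\sum_{j\in M}p_j\,\mu_{\mathbf{p}}(f_j^{-1}(B))$ to $B=f_i(A)$, the term $j=i$ contributes $p_i\,\mu_{\mathbf{p}}(f_i^{-1}f_i(A))=p_i\,\mu_{\mathbf{p}}(A)$ by injectivity of $f_i$, so it remains to kill the cross terms $j\neq i$. Here I would use the OSC: for $A\subset\mathcal{O}$ and $y\in f_j^{-1}(f_i(A))\cap E$ one has $f_j(y)\in f_j(E)\cap f_i(\mathcal{O})\subset\overline{f_j(\mathcal{O})}\setminus f_j(\mathcal{O})=\partial f_j(\mathcal{O})$ (using $E\subset\overline{\mathcal{O}}$ and $f_i(\mathcal{O})\cap f_j(\mathcal{O})=\varnothing$), whence $y\in\partial\mathcal{O}\cap E$. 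Thus each cross term is bounded by $\mu_{\mathbf{p}}(\partial\mathcal{O}\cap E)=\mu_{\mathbf{p}}(E\setminus\mathcal{O})$, and the single-letter case reduces to showing this quantity vanishes. The extension to $i=i_1\cdots i_k$ is then immediate by writing $f_i(A)=f_{i_1}(f_{i_2\cdots i_k}(A))$, noting $f_{i_2\cdots i_k}(A)\subset\mathcal{O}$ since $f_j(\mathcal{O})\subset\mathcal{O}$, and iterating.

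The main obstacle is precisely the boundary‑null fact $\mu_{\mathbf{p}}(E\setminus\mathcal{O})=0$. I would prove it through the coding map $\pi:\Sigma\to E$ and the Bernoulli measure $\mathbb{P}=\mathbf{p}^{\otimes\mathbb{N}}$, for which $\mu_{\mathbf{p}}=\pi_{\sharp}\mathbb{P}$. Set $N:=E\setminus\mathcal{O}$. If $\pi(i)\in N$, then from $\pi(i)=f_{i_1}(\pi(\sigma i))$ one cannot have $\pi(\sigma i)\in\mathcal{O}$ (else $\pi(i)\in f_{i_1}(\mathcal{O})\subset\mathcal{O}$), so $\pi(\sigma i)\in N$; hence $\pi^{-1}(N)\subset\sigma^{-1}(\pi^{-1}(N))$, and since $\sigma$ preserves $\mathbb{P}$, the set $\pi^{-1}(N)$ is $\sigma$-invariant modulo $\mathbb{P}$-null sets. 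By ergodicity of the Bernoulli shift, $\mathbb{P}(\pi^{-1}(N))\in\{0,1\}$. The SOSC rules out the value $1$: since $\mathcal{O}\cap E\neq\varnothing$ and $\mathrm{spt}(\mu_{\mathbf{p}})=E$ (all $p_i>0$), the open set $\mathcal{O}$ meets the support, so $\mu_{\mathbf{p}}(\mathcal{O})>0$ and $\mu_{\mathbf{p}}(N)=1-\mu_{\mathbf{p}}(E\cap\mathcal{O})<1$. Therefore $\mu_{\mathbf{p}}(E\setminus\mathcal{O})=0$, which closes the argument for (i) and hence for all four parts. Alternatively, this boundary‑null property is standard under the OSC and could simply be quoted from \cite{HUTCH} or \cite{Sc}.
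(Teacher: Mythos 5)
Your proposal is correct and follows essentially the same route as the paper's own proof: part (i) comes from the invariance identity \eqref{invariant3} by isolating the $j=i$ term and annihilating the cross terms via the inclusion $f_{j}^{-1}(f_{i}(A))\cap E\subset \partial \mathcal{O}$, part (ii) is the substitution $A=f_{i}^{-1}(C)$, and parts (iii)--(iv) follow by transporting (i)--(ii) to any $\alpha \in \mathcal{M}^{s}\lfloor _{E}$ through \eqref{muvshaus}. The one substantive difference is the treatment of the key null-set fact: the paper simply cites \cite{Mo4} for $\mu _{\mathbf{p}}(\partial \mathcal{O})=0$, whereas you prove it from scratch, and correctly so, by writing $\mu _{\mathbf{p}}=\pi _{\sharp }\mathbb{P}$ for the Bernoulli measure $\mathbb{P}$, observing that $\pi ^{-1}(E\setminus \mathcal{O})$ is almost invariant under the shift (since $\pi (i)=f_{i_{1}}(\pi (\sigma i))$ and $f_{i_{1}}(\mathcal{O})\subset \mathcal{O}$), and invoking ergodicity together with the SOSC, which forces $\mu _{\mathbf{p}}(\mathcal{O}\cap E)>0$ because $spt(\mu _{\mathbf{p}})=E$, to exclude the value $1$; this is in fact the standard proof of the fact the paper cites, so your write-up is a self-contained version of the same argument rather than a different one, and, as you note yourself, the ergodic step could be replaced by a citation. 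Two minor remarks: your reduction of words $i\in M^{\ast }$ to single letters by induction is a slight variant of the paper, which implicitly applies the iterated invariance identity $\mu _{\mathbf{p}}=\sum_{j\in M^{k}}p_{j}\,\mu _{\mathbf{p}}\circ f_{j}^{-1}$ with cross terms indexed by words $j\neq i$ of length $k$ (both work); and the cross-term step in both your argument and the paper's tacitly needs $E\subset \overline{\mathcal{O}}$, which does hold under the SOSC because the $G$-orbit of any point of $\mathcal{O}\cap E$ stays in $\mathcal{O}$ and is dense in $E$ --- you at least flag this hypothesis, while the paper leaves it implicit.
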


\begin{proof}
The proof of \ (\ref{selfsimilarscaling}) is trivial from (\ref{invariant3})
if $E$ satisfies SSC. If SSC does not hold, then%
\begin{equation*}
\mu _{\mathbf{p}}(f_{j}^{-1}(f_{i}(A)))\leq \mu _{\mathbf{p}}(\partial 
\mathcal{O)}=0\text{ for }j\neq i,
\end{equation*}%
because $A\subset \mathcal{O}$ and, hence, $f_{j}^{-1}(f_{i}(A))\cap
E\subset \partial \mathcal{O},$ which is known to be a $\mu _{\mathbf{p}}$-$%
\,$null set (cf. \cite{Mo4}), so (\ref{selfsimilarscaling}) also follows
from (\ref{invariant3}). If we set $A=f_{i}^{-1}(C)$ in (\ref%
{selfsimilarscaling}), we obtain (\ref{inversescalingselfsimilar}) (see also 
\cite{BAN}). By (\ref{muvshaus}), we can apply (\ref{selfsimilarscaling})
and (\ref{inversescalingselfsimilar}) to any measure $\alpha \in \mathcal{M}%
^{s}\lfloor _{E},$ which easily gives (\ref{directscaling}) and (\ref%
{inversescaling}).
\end{proof}

Before stating the main theorem of this section, we will see the following
lemma.

\begin{lemma}
\label{imagemeasureslemma} \hfill \break (i) Let $g,$ $f:\mathbb{R}%
^{n}\rightarrow \mathbb{R}^{n},$ $\alpha \in \mathcal{M}(\mathbb{R}^{n}),$ $%
\lambda >0$ and $A\subset \mathbb{R}^{n}$ be an $\alpha $-measurable subset.
Then, the following equalities hold true:\newline
$\mathbf{\bullet }$ $\lambda \left( g\right) _{\sharp }(\alpha )=\left(
g\right) _{\sharp }(\lambda \alpha ),\newline
\mathbf{\bullet }$ $(f\circ g)_{\sharp }\alpha =f_{\sharp }(g)_{\sharp
}(\alpha ),$ and\newline
$\mathbf{\bullet }$ $\left( g_{\sharp }\alpha \right) \lfloor _{A}=g_{\sharp
}(\alpha \lfloor _{g^{-1}(A)})$\newline
(ii) Let $\alpha $ be a measure on $\mathcal{M}(\mathbb{R}^{n}),$ $g:\mathbb{%
R}^{n}\rightarrow \mathbb{R}^{n}$ a bijective mapping and $\beta :=g_{\sharp
}\left( \alpha \right) .$ Then, $\alpha =(g^{-1})_{\sharp }\beta .$\newline
(iii) If $\left\{ \alpha _{k}\right\} _{k\in \mathbb{N}}$ is a sequence of
measures on $\mathcal{M}(\mathbb{R}^{n})$ and $(g)_{\sharp }\left( \alpha
_{k}\right) \xrightarrow[k \to \infty]{st}\beta ,$ then $\alpha _{k}%
\xrightarrow[k \to
\infty]{st}\left( g^{-1}\right) _{\sharp }\left( \beta \right) .$\newline
(iv) Let $B(x_{n},d):=B_{n}$ be a sequence of closed balls that converges in
the Hausdorff metric to a closed ball $B(x,d):=B,$ and let $\alpha \in 
\mathcal{M}(B)$ with $\alpha (\partial B)=0.$ Then $\alpha \lfloor
_{B_{n}}:=\alpha _{n}$ $\xrightarrow[n \to \infty]{st}\alpha .$
\end{lemma}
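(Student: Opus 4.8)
The plan is to dispatch parts (i)--(iii) by unwinding definitions and to concentrate the genuine work on part (iv).

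For (i), I would verify each identity on an arbitrary $\alpha$-measurable set $B$. The first follows from $(g)_{\sharp}(\lambda\alpha)(B)=(\lambda\alpha)(g^{-1}(B))=\lambda\,\alpha(g^{-1}(B))=\lambda(g_{\sharp}\alpha)(B)$; the second from $(f\circ g)^{-1}=g^{-1}\circ f^{-1}$, giving $(f\circ g)_{\sharp}\alpha(B)=\alpha(g^{-1}(f^{-1}(B)))=(g_{\sharp}\alpha)(f^{-1}(B))=f_{\sharp}(g_{\sharp}\alpha)(B)$; and the third from $g^{-1}(A\cap B)=g^{-1}(A)\cap g^{-1}(B)$, so that $(g_{\sharp}\alpha)\lfloor_{A}(B)=\alpha(g^{-1}(A)\cap g^{-1}(B))=g_{\sharp}(\alpha\lfloor_{g^{-1}(A)})(B)$. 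Part (ii) is the same style of check: for bijective $g$ one has $(g^{-1})_{\sharp}\beta(A)=\beta(g(A))=\alpha(g^{-1}(g(A)))=\alpha(A)$.

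For (iii) I would combine (ii) with the observation that pushforward under a bijection preserves strong convergence, since it acts as an isometry for the total variation norm governing the strong topology (the Jordan components of a signed measure retain their mutual singularity under a bijective image, so the total mass is unchanged). Writing $\alpha_{k}=(g^{-1})_{\sharp}(g_{\sharp}\alpha_{k})$ via (ii) and using linearity of the pushforward, the hypothesis $g_{\sharp}\alpha_{k}\to\beta$ strongly yields $\|\alpha_{k}-(g^{-1})_{\sharp}\beta\|=\|(g^{-1})_{\sharp}(g_{\sharp}\alpha_{k}-\beta)\|=\|g_{\sharp}\alpha_{k}-\beta\|\to 0$, so $\alpha_{k}\xrightarrow{st}(g^{-1})_{\sharp}\beta$.

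The heart of the lemma is part (iv). Since $\alpha$ is concentrated on $B$, a short computation shows that for any measurable $A$ one has $(\alpha\lfloor_{B_{n}}-\alpha)(A)=-\alpha(A\cap(B\setminus B_{n}))$, i.e. $\alpha\lfloor_{B_{n}}-\alpha=-\alpha\lfloor_{B\setminus B_{n}}$, whose total variation is exactly $\alpha(B\setminus B_{n})$. Thus the whole matter reduces to proving $\alpha(B\setminus B_{n})\to 0$. The Hausdorff convergence $B_{n}\to B$ forces $x_{n}\to x$, and I would then show $\limsup_{n}(B\setminus B_{n})\subset\partial B$: if $y$ lies in the interior of $B$ then $|y-x|<d$, so for $n$ large $|y-x_{n}|<d$ and $y\in B_{n}$, whence $y$ can lie in $B\setminus B_{n}$ for only finitely many $n$. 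Reverse Fatou for the finite measure $\alpha$ then gives $\limsup_{n}\alpha(B\setminus B_{n})\le\alpha(\limsup_{n}(B\setminus B_{n}))\le\alpha(\partial B)=0$, and the hypothesis $\alpha(\partial B)=0$ closes the argument, yielding $\alpha\lfloor_{B_{n}}\xrightarrow{st}\alpha$.

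I expect the main obstacle to be exactly this boundary control in (iv): the sets $B\setminus B_{n}$ need not decrease monotonically, so continuity from above is unavailable, and the role of the hypothesis $\alpha(\partial B)=0$ is precisely to neutralise the mass that accumulates on the sphere $\partial B$ in the $\limsup$. The reverse Fatou inequality for finite measures is the clean device that packages this, and it is where the hypothesis is used in an essential way.
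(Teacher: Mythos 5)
Your proposal is correct, and for the substantive part (iv) it follows a genuinely different route from the paper. The paper first shows that for any compact $K$ contained in the interior of $B$ one has the \emph{eventual equality} $\alpha_{n}(K)=\alpha(K)$ (since $K\subset B\cap B_{n}$ for large $n$), and then extends convergence to all Borel subsets of $B$ by proving that the class $\mathcal{A}$ of sets on which $\lim_{n}\alpha_{n}(A)=\alpha(A)$ holds is a $\sigma$-field containing the closed subsets of $B$ (the hypothesis $\alpha(\partial B)=0$ enters when splitting a closed set into its boundary part and a countable union of compacts in the interior). You instead collapse the whole problem into the single identity $\alpha\lfloor_{B_{n}}-\alpha=-\alpha\lfloor_{B\setminus B_{n}}$, so that everything reduces to $\alpha(B\setminus B_{n})\rightarrow 0$, which you obtain from $\limsup_{n}(B\setminus B_{n})\subset\partial B$ plus reverse Fatou for the finite measure $\alpha$. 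Your argument is shorter, uses the hypothesis $\alpha(\partial B)=0$ in one transparent place, and actually proves convergence in total variation, which is strictly stronger than the paper's notion of strong convergence (the paper explicitly defines $\alpha_{n}\xrightarrow{st}\alpha$ as setwise convergence, $\alpha_{n}(A)\rightarrow\alpha(A)$ for every Borel $A$); it also avoids the interchange of $\lim_{n}$ with an infinite sum that the paper's $\sigma$-field argument performs without comment. One small adjustment: in (iii) you justify preservation of convergence by saying the pushforward is a total-variation isometry, but under the paper's setwise definition of the strong topology the cleaner (and equally trivial) justification is the identity $\alpha_{k}(A)=\left(g_{\sharp}\alpha_{k}\right)(g(A))$ for bimeasurable bijective $g$, which gives $\alpha_{k}(A)\rightarrow\beta(g(A))=\left(g^{-1}\right)_{\sharp}\beta(A)$ directly; this is a cosmetic repair, not a gap.
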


\begin{proof}
\newline
Parts (i)-(iii) easily follows from the definitions. \newline
Recall that $\alpha _{n}\xrightarrow[n \to \infty]{st}$ $\alpha $ means that 
$\alpha _{n}(A)\underset{n\rightarrow \infty }{\longrightarrow }\alpha (A)$
for any Borel set $A\subset \mathbb{R}^{n}.$ Let $\alpha \in \mathcal{M(}B)$
and let $K$ be any compact set contained in the interior $U$ of $B.$ The
distance $d(K,\partial B)=\min \left\{ \left\Vert x-y\right\Vert :x\in K,%
\text{ }y\in \partial B\right\} $ must be a quantity $\varepsilon >0$ and
then $K\subset B(x,d-\varepsilon ).$ The convergence of $B_{n}$ to $B$
implies that there is an $n_{0}\in \mathbb{N}^{+}$ such that, for $n>n_{0},$ 
$\left\Vert x-x_{n}\right\Vert \leq \varepsilon .$ Then, if $z\in $ $K,$%
\begin{equation*}
\left\Vert z-x_{n}\right\Vert \leq \left\Vert z-x\right\Vert +\left\Vert
x-x_{n}\right\Vert \leq d,
\end{equation*}%
which shows that $K\subset B\cap B_{n}$ for $n>n_{0}.$ Then, for such values
of $n,$ we have%
\begin{equation*}
\alpha _{n}\left( K\right) =\alpha (B_{n}\cap K)=\alpha (K)
\end{equation*}%
We now prove that $\alpha _{n}\xrightarrow[n \to \infty]{st}\alpha $ also
holds in the $\sigma $-field $\mathfrak{B}(B)$ of Borel subsets of $B.$ Let 
\begin{equation*}
\mathcal{A}:=\left\{ A\subset B:A\text{ is }\alpha \text{-measurable and }%
\lim_{n\rightarrow \infty }\alpha _{n}(A)=\alpha (A)\right\} .
\end{equation*}%
(Notice that any $\alpha $-measurable set is also $\alpha _{n}$-measurable
for all $n\in 
\mathbb{N}
^{+}).$ It is easy to check that $B\in \mathcal{A},$ that $B-A:=A^{c}\in 
\mathcal{A}$ if $A\in \mathcal{A},$ and that $\mathcal{A}$ is closed under a
finite union of its members or, in short, that $\mathcal{A}$ is a field. Let 
$F_{k}$ be a sequence of members of $\mathcal{A}.$ In order to show that $%
\cup _{k\in \mathbb{N}^{+}}F_{k}\in \mathcal{A},$ we first write $\cup
_{k\in \mathbb{N}^{+}}F_{k}=$ $\cup _{k\in \mathbb{N}^{+}}G_{k},$ where $%
G_{k}=\cup _{i=1}^{k}F_{i}.$ This shows that $\cup _{k\in \mathbb{N}%
^{+}}F_{k}$ can be expressed as a countable union of the increasing sequence 
$G_{k}$ of members of $\mathcal{A}.$ Furthermore, $\cup _{k\in \mathbb{N}%
^{+}}F_{k}=$ $\cup _{k\in \mathbb{N}^{+}}H_{k},$ where $%
H_{k}=(G_{k}-G_{k-1}) $ with $G_{0}=\varnothing .$ Now, each $H_{k}\in 
\mathcal{A}$ and $H_{k}\cap H_{k^{\prime }}=\varnothing $ for $k\neq
k^{\prime }.$ Then, using that each $\alpha _{n}$ is a measure, we have 
\begin{equation*}
\lim_{n\rightarrow \infty }\alpha _{n}\left( {\displaystyle%
\bigcup\limits_{k\in \mathbb{N}^{+}}}H_{k}\right) ={\displaystyle%
\sum\limits_{k\in \mathbb{N}^{+}}}\lim_{n\rightarrow \infty }\alpha
_{n}\left( H_{k}\right) ={\displaystyle\sum\limits_{k\in \mathbb{N}^{+}}}%
\alpha \left( H_{k}\right) =\alpha \left( {\displaystyle\bigcup\limits_{k\in 
\mathbb{N}^{+}}}H_{k}\right) .
\end{equation*}%
This completes the proof that $\mathcal{A}$ is a $\sigma $-field. Notice
that any closed set $K\subset B$ can be written as the union of the $\alpha $
and $\alpha _{n}$-null set $K\cap \partial B$ and of the set $K-\partial B,$
which belongs to $\mathcal{A}$ as a countable union of compact sets $K\cap
B(x,d-n^{-1})\subset U.$ Thus, the class $\mathcal{K}$ of closed subsets of $%
B$ is contained in $\mathcal{A}.$ We know that the $\sigma $-fields
generated by $\mathcal{K}$ and by $\mathcal{A}$ satisfy $\mathfrak{B}%
\mathcal{(}B)=\sigma (\mathcal{K)\subset \sigma (A)=A}.$ This gives the
strong convergence of $\alpha _{n}$ to $\alpha $ on $\mathfrak{B}\mathcal{(}%
B).$
\end{proof}

We can now go to the scenery flow of measures in $\mathcal{M}_{\mathcal{S}%
}(E).$

\begin{theorem}
\label{quasitangent}Let $E$ be a self-similar set generated by the system $%
\Psi =\left\{ f_{i}\right\} _{i\in M}$ of similarities on $\mathbb{R}^{n},$
with $M=\left\{ 0,1,\dots ,m-1\right\} $ and similarity dimension $s.$ Let $%
\mathcal{O}$ be a feasible open set (for $\Psi $) and $\mu _{\mathbf{p}}\in 
\mathcal{M}_{\mathcal{S}}(E).$ Then, for any $\mu _{\mathbf{p}}$-measurable
set $B\subset \mathcal{O}$ and $i\in M^{\ast },$ the following statements
hold true.\newline
(i) 
\begin{equation*}
\mu _{\mathbf{p}}\lfloor _{B_{i}}=p_{i}\left( f_{i}\right) _{\sharp }\left(
\mu _{\mathbf{p}}\lfloor _{B}\right) .
\end{equation*}%
(ii) 
\begin{equation}
\mu _{\mathbf{p}}\lfloor _{B}=p_{i}^{-1}\left( f_{i}^{-1}\right) _{\sharp
}\left( \mu _{\mathbf{p}}\lfloor _{B_{i}}\right) .  \label{1b}
\end{equation}%
(iii) There is a subset $\widehat{E}\subset E$ with $\mu _{\mathbf{p}}(%
\widehat{E})=1$ such that, if $x\in E$ and $B(x,d)\subset \mathcal{O},$ then
for any $y\in \widehat{E},$ there is a sequence $\left\{ i_{j}\right\}
_{j\in \mathbb{N}^{+}}$ with $i_{j}\in M^{\ast }$ and a sequence of balls $%
\left\{ B(y,dr_{i_{j}})\right\} _{j\in \mathbb{N}^{+}}$ such that 
\begin{equation*}
p_{i_{j}}^{-1}\left( f_{i_{j}}^{-1}\right) _{\sharp }\left( \mu _{\mathbf{p}%
}\lfloor _{B(y,dr_{i_{j}})}\right) \ \xrightarrow[j \to \infty]{st}\mu _{%
\mathbf{p}}\lfloor _{B(x,d)}
\end{equation*}%
(iv) For any $x\in \widehat{E},$ 
\begin{equation*}
\widetilde{\mathcal{M}}_{\mathcal{S}}\mathcal{(B)\subset }\widetilde{Tan}%
^{st}(\mu _{\mathbf{p}},x),
\end{equation*}%
where $\mathcal{M}_{\mathcal{S}}\mathcal{(B)}$ is defined in (\ref{typical
b-measures}).
\end{theorem}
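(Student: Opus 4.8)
The plan is to dispatch the four parts in order: (i) and (ii) are algebraic consequences of the scaling already recorded, and I would feed them into a density/ergodicity argument to manufacture the tangent sequences in (iii), after which (iv) is a repackaging of (iii). For (i), unwind the restriction: for Borel $C$,
\[
\mu_{\mathbf{p}}\lfloor_{B_i}(C)=\mu_{\mathbf{p}}(C\cap f_i(B))=\mu_{\mathbf{p}}\big(f_i(f_i^{-1}(C)\cap B)\big),
\]
and since $f_i^{-1}(C)\cap B\subset B\subset\mathcal{O}$, Lemma \ref{scalingselfsimilar}(i) turns the right-hand side into $p_i\,\mu_{\mathbf{p}}(f_i^{-1}(C)\cap B)=p_i\,(f_i)_{\sharp}(\mu_{\mathbf{p}}\lfloor_B)(C)$. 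Part (ii) then follows by pushing (i) forward under $f_i^{-1}$, using the functoriality of image measures in Lemma \ref{imagemeasureslemma}(i)--(ii) together with $f_i^{-1}\circ f_i=\mathrm{id}$.

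For (iii) the idea is to realise $B(x,d)$ as the magnification of a ball sitting around $y$ at arbitrarily small scale. I would take $\widehat{E}$ to be the set of points whose symbolic orbit is dense. Writing $\mu_{\mathbf{p}}=\pi_{\sharp}\beta_{\mathbf{p}}$, where $\beta_{\mathbf{p}}$ is the Bernoulli measure on $\Sigma$ determined by $\mathbf{p}$ (all $p_i>0$), ergodicity of the shift gives that $\beta_{\mathbf{p}}$-a.e.\ code contains every finite word, so the orbit $\{\pi(\sigma^{k}\omega):k\in\mathbb{N}\}$ is dense in $E$; hence $\mu_{\mathbf{p}}(\widehat{E})=1$. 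Fix $y=\pi(\omega)\in\widehat{E}$ and a typical ball $B(x,d)\subset\mathcal{O}$. By density I choose $k_j\uparrow\infty$ with $z_j:=\pi(\sigma^{k_j}\omega)\to x$ and set $i_j:=\omega(k_j)=\omega_1\cdots\omega_{k_j}\in M^{\ast}$, so that $f_{i_j}(z_j)=y$, i.e.\ $z_j=f_{i_j}^{-1}(y)$, and $r_{i_j}\to 0$.

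Since $B(x,d)$ is compact and contained in the open set $\mathcal{O}$, it has positive distance to $\mathcal{O}^{c}$; as $z_j\to x$, the balls $B(z_j,d)$ lie in $\mathcal{O}$ for all large $j$. For such $j$, $f_{i_j}(B(z_j,d))=B(f_{i_j}(z_j),r_{i_j}d)=B(y,dr_{i_j})$, so (ii) applied with the ball $B(z_j,d)$ and the word $i_j$ gives exactly
\[
p_{i_j}^{-1}\left(f_{i_j}^{-1}\right)_{\sharp}\left(\mu_{\mathbf{p}}\lfloor_{B(y,dr_{i_j})}\right)=\mu_{\mathbf{p}}\lfloor_{B(z_j,d)}.
\]
It remains to prove $\mu_{\mathbf{p}}\lfloor_{B(z_j,d)}\xrightarrow[j\to\infty]{st}\mu_{\mathbf{p}}\lfloor_{B(x,d)}$. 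As the two balls share the radius $d$ and $z_j\to x$, their symmetric difference lies in the shell $\{w:\,d-\|z_j-x\|<\|w-x\|\le d+\|z_j-x\|\}$, whose $\mu_{\mathbf{p}}$-measure decreases to $\mu_{\mathbf{p}}(\partial B(x,d))$; hence $\big|\mu_{\mathbf{p}}\lfloor_{B(z_j,d)}(A)-\mu_{\mathbf{p}}\lfloor_{B(x,d)}(A)\big|$ is bounded, uniformly in Borel $A$, by this shell mass. This is exactly the situation covered by Lemma \ref{imagemeasureslemma}(iv), and the convergence is strong as soon as $\mu_{\mathbf{p}}(\partial B(x,d))=0$.

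I expect the nullity $\mu_{\mathbf{p}}(\partial B(x,d))=0$ of the bounding sphere to be the main obstacle: it is precisely what upgrades weak to strong convergence, and so what separates $\widetilde{Tan}^{st}$ from $\widetilde{Tan}$. I would isolate it as the one genuinely measure-geometric input (for the natural measure with $s>1$ it is immediate, a sphere being $1$-rectifiable and hence $\mathcal{H}^{s}$-null). Finally, (iv) repackages (iii): by Lemma \ref{classes} the equivalence class of a strong limit depends neither on the normalising constants nor on the representatives chosen along the way, so (iii) places $\widetilde{\mu_{\mathbf{p}}\lfloor_B}$ in $\widetilde{Tan}^{st}(\mu_{\mathbf{p}},x)$ for every typical ball $B$ and every $x\in\widehat{E}$; letting $B$ range over $\mathcal{B}$ yields the inclusion $\widetilde{\mathcal{M}}_{\mathcal{S}}(\mathcal{B})\subset\widetilde{Tan}^{st}(\mu_{\mathbf{p}},x)$, uniformly in $x\in\widehat{E}$.
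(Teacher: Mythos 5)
Your architecture coincides with the paper's own proof almost step for step: you derive (i) from Lemma~\ref{scalingselfsimilar} (using \eqref{selfsimilarscaling} where the paper uses \eqref{inversescalingselfsimilar}, an immaterial difference) and (ii) by pushing (i) forward under $f_i^{-1}$; for (iii) you take $\widehat{E}$ to be the points with dense shift orbit (the paper's set \eqref{hatE}, whose full measure the paper quotes from \cite{Wal} and you obtain from ergodicity of the Bernoulli measure), choose $z_j=f_{i_j}^{-1}(y)\rightarrow x$, apply (ii) to $B(z_j,d)$ to identify the rescaled zoom with $\mu_{\mathbf{p}}\lfloor_{B(z_j,d)}$, and finish with the same Hausdorff-convergence step; and (iv) is the same repackaging through Lemma~\ref{classes} and the definitions \eqref{9}--\eqref{10}. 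Your shell estimate is in fact a clean, self-contained proof of exactly the instance of Lemma~\ref{imagemeasureslemma}(iv) that is needed (note that the lemma as stated assumes $\alpha\in\mathcal{M}(B)$, which $\mu_{\mathbf{p}}$ does not satisfy, so arguing directly as you do is preferable to citing it).

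The one genuine gap is the fact you flag but do not close: $\mu_{\mathbf{p}}(\partial B(x,d))=0$ for every ball and every $\mu_{\mathbf{p}}\in\mathcal{M}_{\mathcal{S}}(E)$. This is indispensable --- without it even the weak convergence of $\mu_{\mathbf{p}}\lfloor_{B(z_j,d)}$ to $\mu_{\mathbf{p}}\lfloor_{B(x,d)}$ can fail, since mass sitting on the sphere may be lost along the sequence --- and the fallback justification you sketch does not survive scrutiny. First, it only addresses the natural measure, whereas the theorem (and its later applications in the paper) concerns arbitrary $\mathbf{p}$; for $\mathbf{p}\neq\mathbf{p}_s$ the measure $\mu_{\mathbf{p}}$ is singular with respect to the relevant Hausdorff-type measures (cf. \cite{Mo3}, \cite{Mo4}), so no comparison with $\mathcal{H}^t$ can yield the nullity. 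Second, even for the natural measure the rectifiability argument is wrong in general dimension: a sphere in $\mathbb{R}^n$ is $(n-1)$-rectifiable, not $1$-rectifiable, so $\mathcal{H}^s(\partial B)=0$ requires $s>n-1$; and when $s\leq n-1$ the sphere has positive or infinite $\mathcal{H}^s$-measure, from which nothing follows about $\mathcal{H}^s(E\cap\partial B)$. The paper closes this point by invoking Mattila's structure result for self-similar fractals \cite{Mat1}, which gives precisely that measures in $\mathcal{M}_{\mathcal{S}}(E)$ vanish on boundaries of balls under the OSC; your proof needs either that citation or an actual argument (e.g.\ the standard blow-up/self-similarity argument showing that a sphere of positive measure would force $E$ to lie in a sphere or hyperplane), and as written it has neither.
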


\begin{proof}
In order to show (i), let $\mu _{\mathbf{p}}\in \mathcal{M}_{\mathcal{S}%
}(E), $ $i\in M^{\ast }$ and let $B\subset \mathcal{O}$ and $A\subset 
\mathbb{R}^{n}$ be $\mu _{\mathbf{p}}$-measurable sets. Then, 
\begin{align*}
\left( p_{i}({f_{i}})_{\sharp }\left( \mu _{\mathbf{p}}\lfloor _{B}\right)
\right) (A)& =p_{i}\left( \mu _{\mathbf{p}}\lfloor _{B}\right) \left(
f_{i}^{-1}(A\right) ) \\
& =p_{i}\mu _{\mathbf{p}}(f_{i}^{-1}(A\cap B_{i}))=\mu _{\mathbf{p}}\lfloor
_{B_{i}}(A),
\end{align*}%
where the third equality follows from \eqref{inversescalingselfsimilar} and
(i) is proved. Analogously, (ii) follows from \eqref{selfsimilarscaling}.%
\newline
Now, let 
\begin{equation}
\widehat{E}=\left\{ y\in E:\left\{ \mathcal{T}^{k}(y):k\in \mathbb{N^{+}}%
\right\} \text{ is dense in }E\right\}  \label{hatE}
\end{equation}%
(see (\ref{shift}) in Sec. \ref{Notation} the definition of $\mathcal{T}).$
It is well known (cf. \cite{Wal}) that the set $\widehat{E}$ has a full $\mu
_{\mathbf{p}}$-measure. Let $x\in E,$ $B(x,d)\subset \mathcal{O},$ $y\in 
\widehat{E}$ and $\left\{ x_{j}\right\} _{j\in 
\mathbb{N}
^{+}}$ such that $\lim_{j\rightarrow \infty }x_{j}=x$ (in the Euclidean
metric) with $x_{j}\in \mathcal{T}^{k_{j}}(y)$ for every ${j}\in \mathbb{%
N^{+}}.$ We may also assume that $B(x_{j},d)\subset \mathcal{O}$ for every $%
j\in \mathbb{N^{+}}.$ We shorten $B(x_{j},d)$ to $B^{j}$ and $B(x,d)$ to $B.$
Since $\lim_{j\rightarrow \infty }x_{j}=x,$ it follows that $\left\{
B^{j}\right\} _{j\in \mathbb{N}}$ converges to $B$ in the Hausdorff metric.
Also, $\mu _{\mathbf{p}}(\partial B)=0$ because $\mu _{\mathbf{p}}\in 
\mathcal{M}_{\mathcal{S}}(E)$ (cf. \cite{Mat1}). Then, Lemma~\ref{imagemeasureslemma} {(iv)} implies that 
\begin{equation}
\mu _{\mathbf{p}}\lfloor _{B^{j}}\xrightarrow[j \to \infty]{st}\mu _{\mathbf{%
p}}\lfloor _{B}.  \label{2}
\end{equation}%
Now, notice that, since $x_{j}\in \mathcal{T}^{k_{j}}(y)$ for each $j\in 
\mathbb{N},$ there is $i_{j}$ $\in M^{k_{j}}$ such that $f_{i_{j}}(x_{j})=y$
(see Remark \ref{Geometric shift}). Then, $%
f_{i_{j}}^{-1}(B(y,dr_{i_{j}}))=B^{j}.$ By \eqref{1b} applied to $B^{j}$ and 
$i_{j}\in M^{\ast },$ we see that 
\begin{equation}
\mu _{\mathbf{p}}\lfloor _{B^{j}}=p_{i_{j}}^{-1}\left( f_{i_{j}}^{-1}\right)
_{\sharp }\left( \mu _{\mathbf{p}}\lfloor _{{B(y,dr_{i_{j}})}}\right) ,
\label{3}
\end{equation}%
which concludes the proof of (iii).\newline
Observe that, in the terminology of Sec. \ref{Identifications of tangent
measures}, the right hand term in (\ref{3}) is, $c_{t_{j}}\nu _{y,t_{j}}$
for $\nu =\mu _{\mathbf{p}},$ $t_{j}=r_{i_{j}}^{-1}$ and $%
c_{t_{j}}=p_{i_{j}}^{-1}\mu _{\mathbf{p}}(B(y,dr_{i_{j}}))$ (recall that $%
\nu _{y,t_{j}}$ was a normalised blowup and notice also that we may assume,
rescaling $E$ if necessary, that all typical balls have a radius $d\leq 1).$
So, (\ref{2}) and (\ref{9}) give $\widetilde{\mu _{\mathbf{p}}\lfloor _{B}}%
\in \widetilde{Tan}^{st}(\mu _{\mathbf{p}},x)$ and part (iv) is proved.
\end{proof}

\subsection{Asymptotic spectra and measure-exact self-similar sets \label%
{Asymptotic spectra}}

We shall write $\mathit{Im}(\theta _{\alpha }^{s},\mathcal{B})$ to designate
the set 
\begin{equation*}
\mathit{Im}(\theta _{\alpha }^{s},\mathcal{B}):=\left\{ \theta _{\alpha
}^{s}(x,d):B(x,d)\in \mathcal{B}\right\}
\end{equation*}%
(see notation in Definition~\ref{typicalball}), which plays a relevant role
in the geometric analysis of $E$ (see (\ref{packing}) and the lemma below).

\begin{lemma}
\label{centeredtypballs} Let $E$ be a self-similar set generated by the
system of similarities of $\mathbb{R}^{n},\ \Psi =\left\{ f_{i}\right\}
_{i\in M},$ with $M=\left\{ 0,1,\dots ,m-1\right\} ,$ and similarity
dimension $s.$ If $E$ satisfies the OSC, then\newline
(i) 
\begin{equation*}
C^{s}(E)=\left( \sup \left\{ \theta _{\mu }^{s}(x,d):B(x,d)\in \mathcal{B}_{%
\mathcal{O}}\right\} \right) ^{-1},
\end{equation*}%
where $\mathcal{B}_{\mathcal{O}}:=\left\{ B(x,d)\in \mathcal{B}:\text{ }%
B(x,d)\subset \mathcal{O}\right\} $ and $\mathcal{O}$ is any feasible open
set for $\Psi .$\newline
(ii)%
\begin{equation*}
C^{s}(E)=\left( \sup \mathit{Im}(\theta _{\mu }^{s},\mathcal{B})\right)
^{-1}.
\end{equation*}
\end{lemma}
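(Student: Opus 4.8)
The plan is to establish (i) for an arbitrary feasible open set $\mathcal O$ and then read off (ii). Put $\Theta_{\mathcal O}:=\sup\{\theta_\mu^s(x,d):B(x,d)\in\mathcal B_{\mathcal O}\}$. Once (i) gives $C^s(E)=\Theta_{\mathcal O}^{-1}$ for every feasible $\mathcal O$, the value $\Theta_{\mathcal O}$ is forced to be independent of $\mathcal O$; since every typical ball belongs to $\mathcal B_{\mathcal O'}$ for some feasible $\mathcal O'$, we get $\theta_\mu^s\le\Theta_{\mathcal O'}=C^s(E)^{-1}$ on all of $\mathcal B$, and hence $\sup\mathit{Im}(\theta_\mu^s,\mathcal B)=C^s(E)^{-1}$, which is (ii). Throughout I will use that $\mu(E\cap\mathcal O)=1$, because $E\subset\overline{\mathcal O}$ and $\mu(\partial\mathcal O)=0$ (cf.\ \cite{Mo4}), together with the proportionality $\mu=C^s\lfloor_E/C^s(E)$ from \eqref{muvshaus}.

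For the lower bound $C^s(E)\ge\Theta_{\mathcal O}^{-1}$ I would argue directly from the premeasure. For $\eta>0$ set $F_\eta:=\{x\in E\cap\mathcal O:\operatorname{dist}(x,\partial\mathcal O)\ge\eta\}$; this is closed, $\mu(F_\eta)\to1$ as $\eta\downarrow0$, and for any covering $\{B(x_i,d_i)\}$ of $F_\eta$ with $x_i\in F_\eta$ and $2d_i\le 2\eta$ each ball satisfies $B(x_i,d_i)\subset\mathcal O$, is therefore typical, and obeys $\mu(B(x_i,d_i))\le\Theta_{\mathcal O}(2d_i)^s$. Summing and using that the balls cover $F_\eta$ gives $\sum_i(2d_i)^s\ge\Theta_{\mathcal O}^{-1}\mu(F_\eta)$, whence $C_0^s(F_\eta)\ge\Theta_{\mathcal O}^{-1}\mu(F_\eta)$ and, by the definition of $C^s$ as a supremum over closed subsets, $C^s(E)\ge\Theta_{\mathcal O}^{-1}\mu(F_\eta)\to\Theta_{\mathcal O}^{-1}$.

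The substance of the upper bound $C^s(E)\le\Theta_{\mathcal O}^{-1}$ is the identity $\overline\theta_\mu^s(x)=\Theta_{\mathcal O}$ for $\mu$-a.e.\ $x$. The inequality $\overline\theta_\mu^s\le\Theta_{\mathcal O}$ holds at every $x\in E\cap\mathcal O$, since for $d$ small $B(x,d)\subset\mathcal O$ is typical. For the reverse inequality I would work on the full-measure set $\widehat E$ of \eqref{hatE}. Fix $\epsilon>0$ and a typical ball $B(x_0,d_0)\subset\mathcal O$ with $\theta_\mu^s(x_0,d_0)>\Theta_{\mathcal O}-\epsilon$, choosing $d_0$ with $\mu(\partial B(x_0,d_0))=0$. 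For $x\in\widehat E$ the density of its shift orbit furnishes words $l_j\in M^{k_j}$ with $r_{l_j}\to0$ and $f_{l_j}^{-1}(x)\to x_0$; then $B(x,r_{l_j}d_0)=f_{l_j}(B(f_{l_j}^{-1}(x),d_0))$, the pre-image ball lies in $\mathcal O$ for $j$ large, and the scaling identity \eqref{directscaling} gives $\theta_\mu^s(x,r_{l_j}d_0)=\theta_\mu^s(f_{l_j}^{-1}(x),d_0)$. Hausdorff-continuity of these balls together with $\mu(\partial B(x_0,d_0))=0$ (Lemma~\ref{imagemeasureslemma}(iv)) forces the right-hand side to $\theta_\mu^s(x_0,d_0)>\Theta_{\mathcal O}-\epsilon$, so $\overline\theta_\mu^s(x)\ge\Theta_{\mathcal O}-\epsilon$ and, letting $\epsilon\to0$, $\overline\theta_\mu^s(x)\ge\Theta_{\mathcal O}$.

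With $\overline\theta_\mu^s\equiv\Theta_{\mathcal O}$ on a set $A$ of full $\mu$-measure, I would invoke the density comparison theorem for the centred Hausdorff measure (\cite{SRT}; see also \cite{MAT2}), which for centred balls carries the sharp constant $1$: from $\overline\theta_\mu^s\ge\Theta_{\mathcal O}$ on $A$ it yields $C^s(A)\le\mu(A)\,\Theta_{\mathcal O}^{-1}=\Theta_{\mathcal O}^{-1}$. Since $\mu(E\setminus A)=0$, the proportionality \eqref{muvshaus} makes $E\setminus A$ also $C^s$-null, so $C^s(E)=C^s(A)\le\Theta_{\mathcal O}^{-1}$, which together with the previous paragraph proves (i). The step I expect to be the main obstacle is precisely this upper bound: the density theorem rests on a Vitali–Besicovitch selection of the near-maximal-density balls produced above into an \emph{honest} cover of $A$ whose remainder is $C^s$-negligible, and one must ensure that the exceptional $\mu$-null set carries no $C^s$-mass — this is where \eqref{muvshaus} is indispensable. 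By contrast, the lower bound and the a.e.\ attainment $\overline\theta_\mu^s\ge\Theta_{\mathcal O}$ are routine consequences of the scaling lemma and the density of shift orbits already in hand.
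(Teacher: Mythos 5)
Your proof is correct, but it follows a genuinely different route from the paper's. The paper's proof is a short reduction: it quotes the known identity \eqref{centradasup} from \cite{Mo1} and \cite{LLM2}, namely $C^{s}(E)=\left(\sup\{\theta_{\mu}^{s}(x,d):x\in E,\ d>0\}\right)^{-1}$, and then only has to show that the supremum over \emph{all} centred balls is dominated by the supremum over $\mathcal{B}_{\mathcal{O}}$; this is done by picking a periodic point $x^{\ast}=f_{i}(x^{\ast})\in E\cap\mathcal{O}$ and pushing any candidate ball $B(x_{0},d_{0})$ into $\mathcal{O}$ by $f_{i}^{k}$, the scaling inequality $\mu(f_{i}^{k}(A))\geq r_{i}^{ks}\mu(A)$ guaranteeing the image ball (which lies in $\mathcal{B}_{\mathcal{O}}$) has at least the same density. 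You instead prove the two-sided estimate from scratch: the lower bound $C^{s}(E)\geq\Theta_{\mathcal{O}}^{-1}$ by covering the closed sets $F_{\eta}$ with small centred balls that are automatically typical, and the upper bound by establishing $\overline{\theta}_{\mu}^{s}\geq\Theta_{\mathcal{O}}$ on the full-measure set $\widehat{E}$ of \eqref{hatE} (dense orbits plus Lemma \ref{scalingselfsimilar}) and then invoking the Saint Raymond--Tricot density theorem with constant $1$ together with \eqref{muvshaus} to discard the $\mu$-null remainder. In effect you re-derive, in the typical-ball setting, the very result \eqref{centradasup} that the paper takes as input; what this buys is self-containedness (modulo \cite{SRT}) and, as a by-product, the a.e.\ identity $\overline{\theta}_{\mu}^{s}(x)=\Theta_{\mathcal{O}}$, while the paper's argument is shorter, elementary given \eqref{centradasup}, and additionally shows that the typical-ball supremum coincides with the global one. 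Two cosmetic repairs: take the covering radii strictly smaller than $\eta$ (or build a margin into $F_{\eta}$) so that the balls really lie in the open set $\mathcal{O}$; and in the orbit argument note that $k_{j}\rightarrow\infty$ can be arranged (since $E$ is perfect, the tail of a dense orbit is still dense), so that the scales $r_{l_{j}}d_{0}$ indeed tend to $0$. Also, finiteness of $\Theta_{\mathcal{O}}$ (standard Ahlfors regularity of $\mu$) should be recorded before inverting it.
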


\begin{proof}
It is known that for a general self-similar set that satisfies the OSC\ (see 
\cite{Mo1} and \cite{LLM2}), 
\begin{equation}
C^{s}(E)=\left( \sup \{\theta _{\mu }^{s}(x,d):x\in E\text{ and }%
d>0\}\right) ^{-1}  \label{centradasup}
\end{equation}%
holds. Let $\mathcal{O}$ be any feasible open set. Then, it is enough to
show that 
\begin{equation*}
\sup_{(x,d)\in E\times \mathbb{R}^{+}}\theta _{\mu }^{s}(x,d)\leq
\sup_{B(x,d)\in \mathcal{B}_{\mathcal{O}}}\theta _{\mu }^{s}(x,d).
\end{equation*}%
Should this not be the case, there would exist $(x_{0},d_{0})\in E\times 
\mathbb{R}^{+}$ such that $B(x_{0},d_{0})\notin \mathcal{B}_{\mathcal{O}}$
and 
\begin{equation*}
\theta _{\mu }^{s}(x_{0},d_{0})>\sup_{B(x,d)\in \mathcal{B}_{\mathcal{O}%
}}\theta _{\mu }^{s}(x,d).
\end{equation*}%
In order to show that this contradicts (\ref{centradasup}), take $x^{\ast
}\in E\cap \mathcal{O}$ such that there is $i\in M^{\ast }$ with $%
f_{i}(x^{\ast })=x^{\ast }.$ Let $\rho _{1}:=\min \left\{ \left\Vert x^{\ast
}-z\right\Vert :z\in \partial \mathcal{O}\right\} .$ Observe that, if we
take $\rho _{2}>0$ so that $B(x_{0},d_{0})\subset B(x^{\ast },\rho _{2})$
and $k\in \mathbb{N}^{+},$ satisfying that $r_{i}^{k}\rho _{2}<\rho _{1},$
then 
\begin{equation*}
f_{i}^{k}(B(x_{0},d_{0}))\subset f_{i}^{k}(B(x^{\ast },\rho _{2}))=B(x^{\ast
},r_{i}^{k}\rho _{2})\subset \mathcal{O},
\end{equation*}%
which, using that $f_{i}^{k}(B(x_{0},d_{0})\cap S)\subset
f_{i}^{k}(B(x_{0},d_{0}))\cap S,$ raises the contradiction 
\begin{equation*}
\theta _{\mu }^{s}(x_{0},d_{0})\leq \frac{r_{i}^{-ks}\mu
(f_{i}^{k}(B(x_{0},d_{0})))}{(2d_{0})^{s}}=\frac{\mu
(B(f_{i}^{k}(x_{0}),r_{i}^{k}d_{0}))}{(2d_{0}r_{i}^{k})^{s}}\leq
\sup_{B(x,d)\in \mathcal{B}_{\mathcal{O}}}\theta _{\mu }^{s}(x,d).
\end{equation*}%
Part (ii) is trivial from (i).
\end{proof}

In the next theorem, we shall establish the relationships between the
pointwise and global spectra, the set $\mathit{Im}(\theta _{\alpha }^{s},%
\mathcal{B})$ and its extreme values $\alpha (E)\left( P^{s}(E)\right) ^{-1}$
and $\alpha (E)\left( C^{s}(E)\right) ^{-1}.$

\begin{theorem}
\label{Spectrum} Let $E\subset \mathbb{R}^{n}$ be a self-similar set that
satisfies the SOSC with feasible open set $\mathcal{O}$ and similarity
dimension $s,$ and let $\alpha \in \mathcal{M}^{s}\lfloor _{E}.$ Then, the
following statements hold true.\newline
(i) For $x\in E,$ it holds that 
\begin{equation*}
Spec(\alpha ,x)=\left[ \underline{\theta }_{\alpha }^{s}(x),\overline{\theta 
}_{\alpha }^{s}(x)\right]
\end{equation*}%
(see (\ref{lower-density}) and (\ref{upper-density}) for notation) 
\begin{equation*}
Spec(\alpha ,E)\subset \left[ \kappa _{1},\kappa _{2}\right]
\end{equation*}%
with $0<\kappa _{1}\leq \kappa _{2}<\infty .$\newline
(ii) There is a subset $\widehat{E}\subset E$ with $\mu (\widehat{E})=1$
such that, for any $y\in \widehat{E},$ 
\begin{equation*}
Spec(\alpha ,y)=Spec(\alpha ,\widehat{E})=Spec(\alpha ,\mathcal{O}\cap E).
\end{equation*}%
(iii) 
\begin{equation*}
\left( \frac{\alpha (E)}{P^{s}(E)},\frac{\alpha (E)}{C^{s}(E)}\right)
\subset \mathit{Im}(\theta _{\alpha }^{s},\mathcal{B})\subset Spec(\alpha ,%
\mathcal{O\cap }E)\subset \left[ \frac{\alpha (E)}{P^{s}(E)},\frac{\alpha (E)%
}{C^{s}(E)}\right] .
\end{equation*}
\end{theorem}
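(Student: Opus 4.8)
The plan is to reduce everything to the natural measure $\mu$ and then combine two ingredients: continuity of the radial density profile and the characterisations of the extreme densities. Since $\alpha=\alpha(E)\,\mu$ by (\ref{muvshaus}), we have $\theta_\alpha^s(x,d)=\alpha(E)\,\theta_\mu^s(x,d)$, so $Spec(\alpha,A)=\alpha(E)\,Spec(\mu,A)$ and $\mathit{Im}(\theta_\alpha^s,\mathcal B)=\alpha(E)\,\mathit{Im}(\theta_\mu^s,\mathcal B)$; it therefore suffices to prove the statements for $\mu$ and multiply by $\alpha(E)$ at the end. The first ingredient is that self-similar measures do not charge the boundary of a ball (the fact $\mu(\partial B)=0$ already used in the proof of Theorem~\ref{quasitangent}, cf.~\cite{Mat1}); consequently, for fixed $x$ the map $d\mapsto\theta_\mu^s(x,d)=\mu(B(x,d))/(2d)^s$ is continuous. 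The second ingredient is (\ref{packing}) together with Lemma~\ref{centeredtypballs} (equivalently (\ref{centrada})), which identify $\inf \mathit{Im}(\theta_\mu^s,\mathcal B)=1/P^s(E)$ and $\sup \mathit{Im}(\theta_\mu^s,\mathcal B)=1/C^s(E)$.

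For the first statement of (i), continuity of $d\mapsto\theta_\mu^s(x,d)$ makes its set of cluster values as $d\to0$ a closed interval containing both the liminf and the limsup, i.e. exactly $[\underline\theta_\mu^s(x),\overline\theta_\mu^s(x)]$; the reverse inclusion $Spec(\mu,x)\subseteq[\underline\theta_\mu^s(x),\overline\theta_\mu^s(x)]$ is automatic. Next I compute these extremes at points of $\widehat E$ (defined in (\ref{hatE}), of full measure): I claim $\underline\theta_\mu^s(z)=1/P^s(E)$ and $\overline\theta_\mu^s(z)=1/C^s(E)$ for every $z\in\widehat E\cap\mathcal O$. One inequality is immediate, since all sufficiently small balls at $z$ are typical, so their densities lie in $[1/P^s(E),1/C^s(E)]$. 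For the reverse, I transfer near-extremal typical balls to the centre $z$ exactly as in the proof of Theorem~\ref{quasitangent}(iii): given a typical ball $B(x_0,d_0)$ with density close to the supremum, density of the shift orbit $\{\mathcal T^k(z)\}$ in $E$ yields points $x_j=\mathcal T^{k_j}(z)\to x_0$ with $f_{i_j}(x_j)=z$ for some $i_j\in M^{k_j}$ (Remark~\ref{Geometric shift}); then Lemma~\ref{scalingselfsimilar}(iii) gives $\theta_\mu^s(z,r_{i_j}d_0)=\theta_\mu^s(x_j,d_0)\to\theta_\mu^s(x_0,d_0)$ with $r_{i_j}d_0\to0$, the convergence using $\mu(\partial B(x_0,d_0))=0$. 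Hence arbitrarily small radii at $z$ realise densities approaching $1/C^s(E)$, so $\overline\theta_\mu^s(z)\ge 1/C^s(E)$; the argument for $\underline\theta_\mu^s(z)=1/P^s(E)$ is identical with infimising balls.

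I then prove (iii), easiest inclusions first. For $Spec(\mu,\mathcal O\cap E)\subseteq[1/P^s(E),1/C^s(E)]$, note that for $z\in\mathcal O\cap E$ and small $d$ the ball $B(z,d)$ is typical, so $\theta_\mu^s(z,d)\in\mathit{Im}(\theta_\mu^s,\mathcal B)\subseteq[1/P^s(E),1/C^s(E)]$, and this closed interval absorbs the limits defining $Spec$. For $\mathit{Im}(\theta_\mu^s,\mathcal B)\subseteq Spec(\mu,\mathcal O\cap E)$, the transfer argument above shows that each density $v=\theta_\mu^s(x_0,d_0)$ of a typical ball equals $\lim_j\theta_\mu^s(z,r_{i_j}d_0)$ with $r_{i_j}d_0\to0$, hence $v\in Spec(\mu,z)\subseteq Spec(\mu,\mathcal O\cap E)$ for any fixed $z\in\widehat E\cap\mathcal O$. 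For the delicate inclusion $(1/P^s(E),1/C^s(E))\subseteq\mathit{Im}(\theta_\mu^s,\mathcal B)$, I fix $z\in\widehat E\cap\mathcal O$: the continuous function $d\mapsto\theta_\mu^s(z,d)$ has $\liminf=1/P^s(E)$ and $\limsup=1/C^s(E)$ as $d\to0$, so for any $c$ in the open interval there are arbitrarily small radii $d',d''$ with $\theta_\mu^s(z,d')<c<\theta_\mu^s(z,d'')$; choosing them small enough that the larger ball lies in $\mathcal O$, the intermediate value theorem yields $d$ with $\theta_\mu^s(z,d)=c$ and $B(z,d)\subset\mathcal O$ typical, so $c\in\mathit{Im}(\theta_\mu^s,\mathcal B)$.

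Finally I address the second statement of (i) and part (ii). The two-sided bound $Spec(\mu,E)\subset[\kappa_1,\kappa_2]$ with $0<\kappa_1\le\kappa_2<\infty$ follows from the standard Ahlfors regularity of $\mu$ on a self-similar $s$-set with OSC (uniform $c_1 d^s\le\mu(B(x,d))\le c_2 d^s$ for $x\in E$). For (ii), take $\widehat E$ of full measure as above (intersected with $\mathcal O$); by (i) and the extreme-value computation, $Spec(\mu,y)=[\underline\theta_\mu^s(y),\overline\theta_\mu^s(y)]=[1/P^s(E),1/C^s(E)]$ is the same closed interval for every $y\in\widehat E$, whence $Spec(\mu,\widehat E)=\bigcup_{y\in\widehat E}Spec(\mu,y)$ equals this interval, and since every $x\in\mathcal O\cap E$ has $Spec(\mu,x)\subseteq[1/P^s(E),1/C^s(E)]$ by the third inclusion of (iii), $Spec(\mu,\mathcal O\cap E)$ coincides with it too. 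Multiplying throughout by $\alpha(E)$ gives all the stated identities and inclusions. The main obstacle is precisely $(1/P^s(E),1/C^s(E))\subseteq\mathit{Im}(\theta_\mu^s,\mathcal B)$: because $E$ is typically disconnected one cannot sweep densities by moving the centre within $E$, so the crossing of an intermediate value $c$ must be produced at a single, carefully chosen centre $z\in\widehat E$ by varying only the radius; this is made possible by the continuity of $d\mapsto\theta_\mu^s(z,d)$ (from $\mu(\partial B)=0$) combined with the orbit-transfer computation of $\liminf=1/P^s(E)$ and $\limsup=1/C^s(E)$.
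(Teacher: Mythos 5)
Your proposal is correct, and it uses the same toolbox as the paper --- continuity of $d\mapsto\theta_{\mu}^{s}(x,d)$ coming from $\mu(\partial B)=0$, the orbit-transfer mechanism of Theorem~\ref{quasitangent}~(iii) (via Lemma~\ref{scalingselfsimilar} and Remark~\ref{Geometric shift}), the identities (\ref{packing}) and (\ref{centrada}), and the reduction $\theta_{\alpha}^{s}=\alpha(E)\theta_{\mu}^{s}$ from (\ref{muvshaus}) --- but it organises the argument differently at the one genuinely delicate point. The paper proves (ii) directly by the transfer argument and then disposes of the first inclusion of (iii) in one line, asserting that $\left(\frac{1}{P^{s}(E)},\frac{1}{C^{s}(E)}\right)\subset \mathit{Im}(\theta_{\mu}^{s},\mathcal{B})$ ``follows from the continuousness of $\theta_{\mu}^{s}(x,d)$ on $\mathbb{R}^{n}\times\mathbb{R}^{+}$'' together with the bounds on $\mathcal{B}$; as stated, that appeal to joint continuity needs some connectedness of the parameter set of typical balls, which fails when $E$ (hence the set of admissible centres) is totally disconnected. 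You instead first establish the pointwise identities $\underline{\theta}_{\mu}^{s}(z)=\frac{1}{P^{s}(E)}$ and $\overline{\theta}_{\mu}^{s}(z)=\frac{1}{C^{s}(E)}$ for every $z\in\widehat{E}$ (one inequality because all small balls centred at $z$ are typical, the other by transferring near-extremal typical balls to $z$ along the dense orbit), and then obtain the inclusion by the intermediate value theorem applied in the radius variable at the single centre $z$, with the radii kept small enough that $B(z,d)\subset\mathcal{O}$. This is more robust than the paper's justification --- it is exactly the fix needed for disconnected $E$, as you correctly flag --- and it yields as a by-product the explicit identification $Spec(\alpha,y)=\left[\frac{\alpha(E)}{P^{s}(E)},\frac{\alpha(E)}{C^{s}(E)}\right]$ for $y\in\widehat{E}$, which the paper only recovers a posteriori by combining (ii) with (iii) (and which is what Theorem~\ref{Sierpinskispectrum}~(i) actually uses). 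The trade-off is minor: the paper's route keeps part (ii) logically independent of the packing/centred-measure characterisations, whereas in your scheme (ii) is downstream of (\ref{packing}), Lemma~\ref{centeredtypballs} and the extreme-value computation; both orderings are legitimate, and your write-up should just make explicit (as the paper does) that dense-orbit points automatically lie in $\mathcal{O}$, so that $\widehat{E}\cap\mathcal{O}=\widehat{E}$ and the small balls you invoke are indeed typical.
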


\begin{proof}
That $\underline{\theta }_{\alpha }^{s}(x)$ and $\overline{\theta }_{\alpha
}^{s}(x)$ belong to and are the extreme values of $Spec(\alpha ,x)$ follows
from the definitions. That all the intermediate values in between also
belong to $Spec(\alpha ,x)$ is a consequence of the continuousness of $%
\theta _{\alpha }^{s}(x,d),$ with respect to $d.$ This last property follows
from the fact that the $\alpha $-measure of the boundary of Euclidean balls
is always null \cite{Mat1} for any measure $\alpha \in \mathcal{M}_{\mathcal{%
S}}(E),$ which proves the first assertion of (i). The second assertion is
well known \cite{HUTCH}.\newline
In order to prove (ii), let $\widehat{E}$ be the full $\mu $-measure subset
of points of $E$ that have a dense geometric shift orbit in $E$ (see %
\eqref{hatE}) and let $y\in \widehat{E}.$ The inclusions $Spec(\alpha
,y)\subset Spec(\alpha ,\widehat{E})\subset Spec(\alpha ,\mathcal{O}\cap E)$
are trivial as $\widehat{E}\subset \mathcal{O}.$ This follows from the fact
that, if $y\notin \mathcal{O},$ then $\mathcal{T}(y)\cap \mathcal{O}%
=\varnothing $ because $f_{i}(\mathcal{O})\subset \mathcal{O}$ for any $i\in
M,$ and repeating the same argument, we see that $T^{k}(y)$ could not be
dense in $E.$\newline
The corresponding equalities would follow if we prove $Spec(\alpha ,\mathcal{%
O}\cap E)\subset Spec(\alpha ,y).$ This holds true because, if $%
z=\lim_{k\rightarrow \infty }\theta _{\alpha }^{s}(x,d_{k})$ for $x\in 
\mathcal{O\cap }E$ and $d_{k}\underset{k\rightarrow \infty }{\longrightarrow 
}0,$ since $B(x,d_{k})\in \mathcal{B}$ for any sufficiently large $k,$ we
can apply Theorem~\ref{quasitangent} {(iii)} to see that, for such values of 
$k,$ $\theta _{\alpha }^{s}(x,d_{k})\in Spec(\alpha ,y)$ and, hence, $%
\lim_{k\rightarrow \infty }\theta _{\alpha }^{s}(x,d_{k})\in Spec(\alpha ,y)$
easily follows from (i). This ends the proof of (ii). \newline
Finally, the first inclusion in (iii) for $\alpha =\mu $ follows from the
continuousness of the function $\theta _{\mu }^{s}(x,d)$ on $\mathbb{R}%
^{n}\times \mathbb{R}^{+}$ since 
\begin{equation*}
\frac{1}{P^{s}(E)}\leq \theta _{\mu }^{s}(x,d)\leq \frac{1}{C^{s}(E)}
\end{equation*}%
holds if $B(x,d)\in \mathcal{B}$ as a straightforward consequence of (\ref%
{packing}) and (\ref{centrada}). The arguments given in the proof of (ii)
applied to $\mu $ show that, if $B(x,d)\in \mathcal{B},$ then $\theta _{\mu
}^{s}(x,d)\in Spec(\mu ,\mathcal{O}\cap E),$ which gives the next inclusion
in (iii). The last inclusion follows from the observation that $Spec(\mu ,%
\mathcal{O\cap }E)$ consists of limiting values of sequences with terms in $%
\mathit{Im}(\theta _{\mu }^{s},\mathcal{B}),$ whose extreme values are $%
\frac{1}{P^{s}(E)}$ and $\frac{1}{C^{s}(E)}.$ Using \eqref{muvshaus}, we get
that $\theta _{\alpha }^{s}(x,d)=\alpha (E)\theta _{\mu }^{s}(x,d),$ and
(iii) follows for any $\alpha \in \mathcal{M}^{s}\lfloor _{E}.$
\end{proof}

Of note is the case in which the extreme values of $\theta _{\alpha
}^{s}(x,d)$ are attained on $\mathcal{B}.$ In this case, we have the
following result.

\begin{corollary}
\label{corol} \label{exact}Let $\alpha \in \{\mu ,$ $P^{s}{\lfloor _{E},}$ $%
C^{s}{\lfloor _{E}\}}.$ Under the hypotheses of Theorem \ref{Spectrum}, if
there are two balls $B(x_{1},d_{1})$ and $B(x_{2},d_{2}),$ both in $\mathcal{%
B},$ such that 
\begin{equation}
\theta _{\mu }^{s}(x_{1},d_{1})=\inf \left\{ \theta _{\mu
}^{s}(x,d):B(x,d)\in \mathcal{B}\right\}  \label{inf}
\end{equation}%
and 
\begin{equation}
\theta _{\mu }^{s}(x_{2},d_{2})=\sup \left\{ \theta _{\mu
}^{s}(x,d):B(x,d)\in \mathcal{B}\right\} ,  \label{sup}
\end{equation}%
the inclusions in Theorem~\ref{Spectrum} (iii) can be replaced with
equalities.
\end{corollary}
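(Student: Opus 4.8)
The plan is to observe that Theorem~\ref{Spectrum}~(iii) already traps both $\mathit{Im}(\theta_\alpha^s,\mathcal{B})$ and $Spec(\alpha,\mathcal{O}\cap E)$ between the open interval $\left(\frac{\alpha(E)}{P^s(E)},\frac{\alpha(E)}{C^s(E)}\right)$ and its closure, so that the only thing preventing the inclusions from being equalities is the possible absence of the two endpoints. Consequently, it suffices to show that, under hypotheses \eqref{inf} and \eqref{sup}, the endpoints $\frac{\alpha(E)}{P^s(E)}$ and $\frac{\alpha(E)}{C^s(E)}$ themselves belong to $\mathit{Im}(\theta_\alpha^s,\mathcal{B})$.

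First I would reduce everything to the measure $\mu$. By \eqref{muvshaus} we have $\alpha=\alpha(E)\,\mu$ for every $\alpha\in\mathcal{M}^s\lfloor_E$, whence $\theta_\alpha^s(x,d)=\alpha(E)\,\theta_\mu^s(x,d)$ for all balls. In particular the infimum and supremum of $\theta_\alpha^s$ over $\mathcal{B}$ are obtained from those of $\theta_\mu^s$ by the positive factor $\alpha(E)$, and a given ball attains an extremum of $\theta_\alpha^s$ if and only if it attains the corresponding extremum of $\theta_\mu^s$. Combining this with the identifications $\frac{1}{P^s(E)}=\inf\{\theta_\mu^s(x,d):B(x,d)\in\mathcal{B}\}$ and $\frac{1}{C^s(E)}=\sup\{\theta_\mu^s(x,d):B(x,d)\in\mathcal{B}\}$ coming from (\ref{packing}) and (\ref{centrada}), I obtain $\frac{\alpha(E)}{P^s(E)}=\inf\{\theta_\alpha^s(x,d):B(x,d)\in\mathcal{B}\}$ and $\frac{\alpha(E)}{C^s(E)}=\sup\{\theta_\alpha^s(x,d):B(x,d)\in\mathcal{B}\}$.

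Now hypotheses \eqref{inf} and \eqref{sup} say precisely that this infimum and supremum (for $\mu$, hence for $\alpha$ by the scaling above) are attained by the typical balls $B(x_1,d_1)$ and $B(x_2,d_2)$. Therefore $\theta_\alpha^s(x_1,d_1)=\frac{\alpha(E)}{P^s(E)}$ and $\theta_\alpha^s(x_2,d_2)=\frac{\alpha(E)}{C^s(E)}$ both lie in $\mathit{Im}(\theta_\alpha^s,\mathcal{B})$. Adjoining these two endpoints to the open interval already contained in $\mathit{Im}(\theta_\alpha^s,\mathcal{B})$ by Theorem~\ref{Spectrum}~(iii) yields the full closed interval $\left[\frac{\alpha(E)}{P^s(E)},\frac{\alpha(E)}{C^s(E)}\right]\subset\mathit{Im}(\theta_\alpha^s,\mathcal{B})$. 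Since Theorem~\ref{Spectrum}~(iii) also gives $\mathit{Im}(\theta_\alpha^s,\mathcal{B})\subset Spec(\alpha,\mathcal{O}\cap E)\subset\left[\frac{\alpha(E)}{P^s(E)},\frac{\alpha(E)}{C^s(E)}\right]$, all three sets coincide with the closed interval, and every inclusion of Theorem~\ref{Spectrum}~(iii) becomes an equality.

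This argument is essentially a bookkeeping chaining of inclusions already established, so I do not anticipate a genuine obstacle. The one point to handle with care is the transfer of hypotheses \eqref{inf} and \eqref{sup}, which are phrased for $\theta_\mu^s$, to each of the three measures $\alpha\in\{\mu,P^s\lfloor_E,C^s\lfloor_E\}$; this is guaranteed by the exact proportionality $\theta_\alpha^s=\alpha(E)\,\theta_\mu^s$, which ensures that a single pair of extremal balls works simultaneously for all three choices of $\alpha$.
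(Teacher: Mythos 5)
Your proposal is correct and follows essentially the same route as the paper's own proof: use (\ref{packing}) and (\ref{centrada}) to identify the endpoints of the interval with the infimum and supremum of $\theta _{\mu }^{s}$ over $\mathcal{B}$, invoke hypotheses (\ref{inf}) and (\ref{sup}) to conclude these endpoints are attained and hence lie in $\mathit{Im}(\theta _{\mu }^{s},\mathcal{B})$, so that the first inclusion of Theorem~\ref{Spectrum}~(iii) upgrades to $\mathit{Im}(\theta _{\mu }^{s},\mathcal{B})=\left[ \frac{1}{P^{s}(E)},\frac{1}{C^{s}(E)}\right] $, and then the chain of inclusions collapses to equalities. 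The only difference is that you spell out the scaling $\theta _{\alpha }^{s}=\alpha (E)\,\theta _{\mu }^{s}$ from (\ref{muvshaus}) explicitly, which the paper leaves implicit.
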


\begin{proof}
The first inclusion in Theorem \ref{Spectrum}~(iii), together with (\ref%
{packing}), (\ref{centrada}), (\ref{inf}) and (\ref{sup}), implies that 
\begin{equation*}
\mathit{Im}(\theta _{\mu }^{s},\mathcal{B})=\left[ \frac{1}{P^{s}(E)},\frac{1%
}{C^{s}(E)}\right] ,
\end{equation*}%
which, in turn, gives that $\mathit{Im}(\theta _{\alpha }^{s},\mathcal{B}%
)=Spec(\alpha ,\mathcal{O\cap }E).$
\end{proof}

Corollary~\ref{corol} motivates the introduction of the class of $\alpha $-%
\textit{exact self-similar sets} with special properties.

\begin{definition}
\label{Exact}We say that the self-similar set $E$ is $\alpha $-\emph{exact}
if there exists $B\in \mathcal{C}_{\alpha }$ such that 
\begin{equation*}
\frac{\mu (B)}{\left\vert B\right\vert ^{s}}=\sup \left\{ \frac{\mu (B)}{%
\left\vert B\right\vert ^{s}}:B\in \mathcal{C}_{\alpha }\right\}
\end{equation*}%
if $\alpha \in \left\{ C^{s}{\lfloor _{E}},\mathcal{H}^{s}{\lfloor _{E}},%
\mathcal{H}_{Sph}{\lfloor _{E}}\right\} ,$ and 
\begin{equation*}
\frac{\mu (B)}{\left\vert B\right\vert ^{s}}=\inf \left\{ \frac{\mu (B)}{%
\left\vert B\right\vert ^{s}}:B\in \mathcal{C}_{\alpha }\right\} ,
\end{equation*}%
if $\alpha =P^{s}{\lfloor _{E}},$ where $\mathcal{C}_{\alpha }$ is what we
call \textquotedblleft the relevant class of sets\textquotedblright\ for the
measure $\alpha ,$ which is defined as \newline
$\bullet $ $\mathcal{C}_{\alpha }:=$ $\mathcal{B}$ if $\alpha \in \left\{
P^{s}{\lfloor _{E}},C^{s}{\lfloor _{E}}\right\} ,$\newline
$\bullet $ $\mathcal{C}_{\mathcal{H}^{s}{\lfloor _{E}}}:=\{B\subset \mathbb{R%
}^{n}:\ B\ \text{ is a convex set}\}$ and\newline
$\bullet $ $\mathcal{C}_{\mathcal{H}_{Sph}^{s}{\lfloor _{E}}}:=\{B\subset 
\mathbb{R}^{n}:\ B\ \text{ is a closed ball}\}.$
\end{definition}

One nice property that $\alpha $-exact self-similar sets have is that they
possess optimal coverings or packings, that is, almost-coverings (i.e.
coverings for $\alpha $-almost all points in $E)$ or packings whose $s$%
-volume gives the exact value of the corresponding $\alpha $-measure, whilst
if $\alpha $-exactness is not fulfilled, we can only hope to find coverings
or packings with $s$-volume arbitrarily close to the corresponding $\alpha $%
-measure.

\begin{example}
\label{totally disconnected} Self-similar sets $E$ with the strong
separation condition are an example of $\alpha$-exact self-similar sets. See 
\cite{LLM1} for $\alpha\in\left\{ P^{s}{\lfloor_{E}},\text{ }\mathcal{H}^{s}{%
\lfloor_{E}},\text{ }\mathcal{H}_{Sph}^{s}{\lfloor_{E}}\right\} $ and \cite%
{LLM2} for $\alpha=C^{s}.$
\end{example}

\begin{example}
The Sierpinski gasket $S$ is an example of a set where the strong separation
condition does not hold, and that is a $P^{s}{\lfloor }_{S}$-exact (see \cite%
{LLMM2}) and $C^{s}{\lfloor _{S}}$-exact (see \cite{LLMM3}) set.
\end{example}

In \cite{AYER}, it is shown a class of self-similar sets $E$ with OSC in the
line whose members can be non-$\mathcal{H}^{s}{\lfloor _{E}}$-exact (and,
consequently, non-$\mathcal{H}_{Sph}^{s}{\lfloor _{E}}$-exact since these
two measures coincide in the line), and the authors find conditions under
which they are $\mathcal{H}^{s}{\lfloor _{E}}$-exact.

\begin{example}
Self-similar sets $E$ with OSC in the line, with similarity dimension $s,$
and that admit an open interval as a feasible open set, are an example of $%
P^{s}{\lfloor _{E}}$-exact self-similar sets \cite{FENG2}.
\end{example}

\subsection{Complexity of the local structure of self-similar sets\label%
{complexity}}

We now show how these results allow us to explore the complexity of the
local geometric structure of self-similar sets that satisfy the OSC
condition.

First, we need to properly define the equivalence classes of restricted
balls. Notice that different Euclidean balls, even if they share the centre,
can produce the same restricted balls. This motivates the following
definitions that are valid for general subsets of $\mathbb{R}^{n}.$

\begin{definition}
\label{Properdiameter} Given a subset $A\subset \mathbb{R}^{n},$ the
spherical diameter of $A$ is defined by 
\begin{equation*}
\left\vert A\right\vert _{Sph}=\inf \left\{ 2d:A=A\cap B(x,d)\text{ for some 
}x\in A\right\}
\end{equation*}
\end{definition}

\begin{definition}
\label{Restrictedballs} Given a subset $A\subset \mathbb{R}^{n},$ we say
that the restricted ball $B(x,d)\cap A$ is proper and write $B(x,d)\cap A\in 
\mathfrak{P}(A)$ if $x\in A$ and $2d=\left\vert B(x,d)\cap A\right\vert
_{Sph}.$
\end{definition}

\begin{definition}
\label{Spherical density} Given a measure $\alpha$ on $\mathbb{R}^{n}$ and
an $\alpha$-measurable $s$-set $A\subset\mathbb{R}^{n},$ we define the $%
\alpha $-spherical $s$-density of $A$ by 
\begin{equation*}
\theta_{Sph(\alpha)}^{s}(A)=\frac{{\alpha(A)}}{\left( \left\vert
A\right\vert _{Sph}\right) ^{s}}.
\end{equation*}
\end{definition}

\begin{definition}
\label{densityequivalence}Given a subset $A\subset \mathbb{R}^{n}$ and two
restricted balls $B(x,d)\cap A,$ $B(x^{\prime },d^{\prime })\cap A$ both in $%
\mathfrak{P}(A),$ we say that they are similarity-equivalent and write $%
B(x,d)\cap A$ $\simeq _{\mathcal{S}_{n}}B(x^{\prime },d^{\prime })\cap A$ if
there is an $f\in \mathcal{S}_{n}$ such that 
\begin{equation*}
B(x^{\prime },d^{\prime })\cap A=f(B(x,d)\cap A).
\end{equation*}
\end{definition}

\begin{lemma}
\label{equidensityrestricted} Let $A\subset \mathbb{R}^{n}$ and $B(x,d)\cap
A\in \mathfrak{P}(A).$\newline
(i) If $f\in \mathcal{S}_{n}$ has similarity constant $r_{f},$ then $%
f(B(x,d))\cap f(A)\in \mathfrak{P}(f(A))$ and $\left\vert f(B(x,d))\cap
f(A)\right\vert _{p}=r_{f}d.$\newline
(ii) Let $\alpha \in \mathcal{M}^{s}$ and let $A$ be an $\alpha $-measurable 
$s$-set. If $B(x,d)\cap A\simeq _{\mathcal{S}_{n}}B(x^{\prime },d^{\prime
})\cap A,$ then 
\begin{equation*}
\theta _{Sph(\alpha )}^{s}(B(x,d)\cap A)=\theta _{Sph(\alpha
)}^{s}(B(x^{\prime },d^{\prime })\cap A)
\end{equation*}
\end{lemma}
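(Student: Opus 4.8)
The plan is to derive both parts from two covariance properties of a similitude $f\in\mathcal{S}_n$ of ratio $r_f$: first, that $f$ carries the closed ball $B(x,d)$ onto $B(f(x),r_fd)$ (since $\|f(y)-f(x)\|=r_f\|y-x\|$), and second, that $f$ is a bijection and therefore commutes with intersections, $f(B\cap A)=f(B)\cap f(A)$. Combining these gives $f(B(x,d)\cap A)=B(f(x),r_fd)\cap f(A)$, so the image of a restricted ball of $A$ is a restricted ball of $f(A)$ with the transported centre $f(x)\in f(A)$. This is the structural observation on which everything rests.

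For part (i) the heart of the matter is to show that the spherical diameter of Definition~\ref{Properdiameter} is covariant of degree one, i.e. $|f(C)|_{Sph}=r_f\,|C|_{Sph}$ for every set $C$. I would argue that $C\subseteq B(y,e)$ with $y\in C$ holds if and only if $f(C)\subseteq f(B(y,e))=B(f(y),r_fe)$ with $f(y)\in f(C)$; since $f$ is a bijection, the assignments $y\mapsto f(y)$ and $e\mapsto r_fe$ give an exact correspondence between the admissible enclosing balls for $C$ and those for $f(C)$. Taking the infimum over admissible radii on both sides yields $|f(C)|_{Sph}=r_f\,|C|_{Sph}$. Applying this with $C=B(x,d)\cap A$ and using the properness hypothesis $2d=|C|_{Sph}$, I conclude that $f(C)=B(f(x),r_fd)\cap f(A)$ lies in $\mathfrak{P}(f(A))$ and that its proper radius equals $r_fd$, which is the stated scaling.

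For part (ii), let $f\in\mathcal{S}_n$ witness $B(x,d)\cap A\simeq_{\mathcal{S}_n}B(x',d')\cap A$, and write $C=B(x,d)\cap A$ and $C'=f(C)=B(x',d')\cap A$. The denominator of the spherical density scales by part (i): $(|C'|_{Sph})^s=r_f^s\,(|C|_{Sph})^s$. For the numerator I would invoke the standard $s$-homogeneity of every metric measure in $\mathcal{M}^s$ under similitudes, namely $\alpha(f(C))=r_f^s\,\alpha(C)$, valid for each of $\mathcal{H}^s,\mathcal{H}_{Sph}^s,C^s,P^s$ since each is built from $s$-th powers of diameters (or radii) of covering or packing families that $f$ rescales by $r_f$. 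The two factors $r_f^s$ then cancel in the quotient $\theta_{Sph(\alpha)}^s=\alpha(\cdot)/(|\cdot|_{Sph})^s$, giving the claimed equality of spherical $s$-densities. The main obstacle is the careful bookkeeping in the diameter covariance of part (i): one must keep the enclosing ball's centre inside the set throughout and verify that the correspondence induced by the bijection $f$ is exact — not merely a one-sided inclusion of admissible radii — so that the two infima match rather than satisfy only one inequality. Once that covariance is secured, the numerator scaling in part (ii) is a routine appeal to the homogeneity of each measure in $\mathcal{M}^s$.
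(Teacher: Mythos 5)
Your proposal is correct and follows essentially the same route as the paper: both parts rest on the fact that a similitude $f$ carries enclosing balls centred in a set to enclosing balls centred in its image (scaling radii by $r_f$), which yields $\left\vert f(C)\right\vert _{Sph}=r_{f}\left\vert C\right\vert _{Sph}$, followed by the $s$-homogeneity $\alpha (f(C))=r_{f}^{s}\alpha (C)$ of the measures in $\mathcal{M}^{s}$. The only cosmetic difference is that you prove the diameter covariance directly via the two-sided correspondence of admissible balls, whereas the paper obtains the same fact by pulling back a hypothetical smaller enclosing ball and deriving a contradiction with properness.
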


\begin{proof}
Let $A\subset \mathbb{R}^{n},$ $B(x,d)\cap A\in $ $\mathfrak{P}(A).$ In
order to show (i), assume that $f(B(x,d))\cap f(A)$ is not proper. Then,
there is a ball $B(y,\rho )$ such that 
\begin{equation*}
B(y,\rho )\cap f(A)=f(B(x,d))\cap f(A)
\end{equation*}%
with $y\in f(A)$ and $\rho <r_{f}d.$ Then $B(f^{-1}(y),r_{f}^{-1}\rho )\cap
A=B(x,d)\cap A$ with $f^{-1}(y)\in A$ and $r_{f}^{-1}\rho <d,$ in
contradiction with $\left\vert B(x,d)\cap A\right\vert _{Sph}=2d.$
Therefore, $f(B(x,d))\cap f(A)\in \mathfrak{P(}f(A))$ and $\left\vert
f(B(x,d))\cap f(A)\right\vert _{Sph}=2r_{f}d.$\newline
Part (ii) is now trivial since $\alpha \in \mathcal{M}^{s}$ and, hence, 
\begin{equation*}
\alpha (B(x^{\prime },d^{\prime })\cap A)=\alpha (f(B(x,d)\cap
A))=r_{f}^{s}\alpha (B(x,d)\cap A)
\end{equation*}%
and, by (i), 
\begin{equation*}
\left( \left\vert B(x^{\prime },d^{\prime })\cap A\right\vert _{Sph}\right)
^{s}=\left( \left\vert f(B(x,d)\cap f(A))\right\vert _{Sph}\right)
^{s}=r_{f}^{s}(2d)^{s}=r_{f}^{s}\left( \left\vert B(x,d)\cap A\right\vert
_{Sph}\right) ^{s}.
\end{equation*}
\end{proof}

Now we can proceed to state our result for the complexity of the local
geometry of self-similar sets with OSC.

\begin{corollary}
\label{equidensityrestricted copy(1)} Under the assumptions of Theorem~\ref%
{Spectrum}, assume that $s$ is a non-integer real number. Then, there is an
uncountable number of equivalence classes in the quotient space $%
Sph_{E}/\simeq _{\mathcal{S}_{n}}.$
\end{corollary}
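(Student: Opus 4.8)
The plan is to produce a single real-valued invariant of the relation $\simeq_{\mathcal{S}_{n}}$ that already takes uncountably many values. The natural choice is the spherical $s$-density $\theta_{Sph(\alpha)}^{s}$ of Definition~\ref{Spherical density}, computed for a fixed $\alpha\in\mathcal{M}^{s}\lfloor_{E}$. By Lemma~\ref{equidensityrestricted}(ii) two similarity-equivalent proper restricted balls have the same spherical $s$-density; and since a similarity $f\in\mathcal{S}_{n}$ scales the spherical diameter by its ratio $r_{f}$ and scales any measure in $\mathcal{M}^{s}$ by $r_{f}^{s}$, this invariance holds for \emph{all} restricted balls in $Sph_{E}$. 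Hence $\theta_{Sph(\alpha)}^{s}$ descends to a well-defined function on $Sph_{E}/\simeq_{\mathcal{S}_{n}}$, and the number of equivalence classes is at least the cardinality of its range. It therefore suffices to show that $\theta_{Sph(\alpha)}^{s}$ takes uncountably many values on $Sph_{E}$.

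To feed the spectral results into this invariant, I would first note that on a \emph{proper} restricted ball the spherical density coincides with the ordinary ball density: if $B(x,d)\cap E\in\mathfrak{P}(E)$ then $\left\vert B(x,d)\cap E\right\vert_{Sph}=2d$ by Definition~\ref{Restrictedballs}, while $\alpha(B(x,d)\cap E)=\alpha(B(x,d))$ because $\alpha$ is carried by $E$, so $\theta_{Sph(\alpha)}^{s}(B(x,d)\cap E)=\theta_{\alpha}^{s}(x,d)$. Next I would use that $s$ is not an integer: by Marstrand's Theorem \cite{MARST} the density $\theta_{\alpha}^{s}(x)$ cannot exist on a set of positive $\alpha$-measure, so $\underline{\theta}_{\alpha}^{s}(x)<\overline{\theta}_{\alpha}^{s}(x)$ for $\alpha$-a.e. $x$. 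Substituting this into Theorem~\ref{Spectrum}(i)--(iii) forces $\frac{\alpha(E)}{P^{s}(E)}<\frac{\alpha(E)}{C^{s}(E)}$ and shows that for every $y\in\widehat{E}$ one has $\underline{\theta}_{\alpha}^{s}(y)=\frac{\alpha(E)}{P^{s}(E)}$ and $\overline{\theta}_{\alpha}^{s}(y)=\frac{\alpha(E)}{C^{s}(E)}$. As $d\mapsto\theta_{\alpha}^{s}(y,d)$ is continuous (the $\alpha$-mass of every sphere vanishes), the intermediate value theorem then gives, for such a fixed $y$, a continuum of radii at which $\theta_{\alpha}^{s}(y,\cdot)$ attains every value of the nondegenerate open interval $J:=\bigl(\frac{\alpha(E)}{P^{s}(E)},\frac{\alpha(E)}{C^{s}(E)}\bigr)$.

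The main obstacle is that the balls $B(y,d)$ realizing these values need not be proper, so a priori $\theta_{\alpha}^{s}(y,d)$ is not itself a value of the invariant $\theta_{Sph(\alpha)}^{s}$. To circumvent this I would use that the centre of a spherical neighbourhood is unconstrained: $Sph_{E}$ comprises all sets $B(x,d)\cap E$ with $x\in\mathbb{R}^{n}$, so its parameter space $(x,d)$ is a connected continuum on which $\theta_{Sph(\alpha)}^{s}(B(x,d)\cap E)=\alpha(B(x,d))/\left\vert B(x,d)\cap E\right\vert_{Sph}^{s}$ is defined. The numerator is continuous, the spherical diameter is continuous under Hausdorff convergence of the compact sets $B(x,d)\cap E$, and these vary continuously in $(x,d)$ off a meagre set of parameters where $E$ grazes the bounding sphere; thus $\theta_{Sph(\alpha)}^{s}$ is continuous along suitably chosen paths. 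The crux I expect to grind out is that its range is not merely a countable (dense) set but actually contains an interval. This is precisely where non-integrality of $s$ is indispensable: for an integer $s$ and a flat piece (a subsegment, say) the invariant is constant, whereas the oscillation from the previous paragraph—$\theta_{\alpha}^{s}(y,\cdot)$ being a lower bound for $\theta_{Sph(\alpha)}^{s}$ that sweeps all of $J$, together with the tight (Chebyshev-centred) enclosures of small balls, whose spherical densities approach $\frac{\alpha(E)}{C^{s}(E)}$—should force an entire subinterval of values into the range by continuity along a connecting path.

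Once an uncountable set of values of $\theta_{Sph(\alpha)}^{s}$ has been secured, the invariance established in the first paragraph separates the corresponding restricted balls into uncountably many classes of $Sph_{E}/\simeq_{\mathcal{S}_{n}}$, which is exactly the assertion of the corollary.
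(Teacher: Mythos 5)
Your proposal is, at bottom, the same argument as the paper's: the spherical $s$-density is constant on each class of $Sph_{E}/\simeq _{\mathcal{S}_{n}}$ (Lemma~\ref{equidensityrestricted}(ii)), and Marstrand's theorem combined with Theorem~\ref{Spectrum}(i)--(iii) and $\mu (\widehat{E})=1>0$ forces $C^{s}(E)<P^{s}(E)$ when $s$ is not an integer, so that $\mathit{Im}(\theta _{\mu }^{s},\mathcal{B})$ contains the nondegenerate interval $\left( \frac{\alpha (E)}{P^{s}(E)},\frac{\alpha (E)}{C^{s}(E)}\right) $; counting values of the invariant then yields uncountably many classes. Your first two paragraphs and your conclusion carry this out correctly; in fact your observation that the invariance extends to all restricted balls (not just proper ones) is a cleaner formulation than the paper's, and your sharpening that $\underline{\theta }_{\alpha }^{s}(y)=\frac{\alpha (E)}{P^{s}(E)}$ and $\overline{\theta }_{\alpha }^{s}(y)=\frac{\alpha (E)}{C^{s}(E)}$ for every $y\in \widehat{E}$ is a correct reading of Theorem~\ref{Spectrum}.

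Where you depart from the paper is your third paragraph. The obstacle you isolate is genuine: the class invariant of $B(x,d)\cap E$ is $\alpha (B(x,d))\big/ \left\vert B(x,d)\cap E\right\vert _{Sph}^{s}$, which equals $\theta _{\alpha }^{s}(x,d)$ only when the restricted ball is proper, so membership of a value in $\mathit{Im}(\theta _{\mu }^{s},\mathcal{B})$ does not by itself make it a value of the invariant. You should know, however, that the paper's own proof never confronts this point: it defines the map from classes into $\mathit{Im}(\theta _{\mu }^{s},\mathcal{B})$ and inverts it, tacitly treating each typical-ball density as the spherical density of some class. So you have flagged a soft spot in the published argument rather than a defect of your own approach. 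What you cannot claim is to have repaired it: the set-valued map $(x,d)\mapsto B(x,d)\cap E$ fails to be Hausdorff-continuous at radii where new pieces of $E$ enter the ball, your assertion that the bad parameters form a meagre set is unsupported, and the closing step (that continuity along paths \textquotedblleft should force an entire subinterval of values into the range\textquotedblright ) is exactly the statement requiring proof; note also that a function continuous off a dense countable set of jumps can have totally disconnected range, so this strategy cannot succeed without additional input. Observe, too, that you only need uncountably many values of $\theta _{Sph(\alpha )}^{s}$, not an interval, which suggests a more constructive route (for instance, restricted balls whose boundary sphere contains two antipodal points of $E$ are automatically proper), though making that work for a general $E$ with OSC still requires an argument. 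In sum: your write-up reproduces the paper's proof where the paper has one, and is openly incomplete exactly where the paper is silently so.
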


\begin{proof}
By Lemma~\ref{equidensityrestricted} {(ii)}, we know that all restricted
balls in an equivalence class of $Sph_{E}/\simeq _{\mathcal{S}_{n}}$ share
the same $\mu $-spherical $s$-density, which allows us to naturally define a
mapping $\theta _{\mu }^{s}:$ $Sph_{E}/\simeq _{\mathcal{S}_{n}}\rightarrow 
\mathit{Im}(\theta _{\mu }^{s},\mathcal{B}).$ This implies that the inverse $%
\left( \theta _{\mu }^{s}\right) ^{-1}:\mathit{Im}(\theta _{\mu }^{s},%
\mathcal{B})\mathbb{\rightarrow }Sph_{E}/\simeq _{\mathcal{S}_{n}}$ of such
mapping is an injective correspondence. Using Marstrand's Theorem, parts
(ii) and (iii) of Theorem~\ref{Spectrum}{\ }and that $\mu (\widehat{E})=1>0,$
it follows that either $C^{s}(E)<P^{s}(E)$ or $s$ is an integer (notice that
from the definitions in Sec. \ref{sectionmetricmeasures} it is easy to see
that $C^{s}(E)\leq P^{s}(E)).$ This, together with Theorem~\ref{Spectrum} {%
(iii)}, means that $\mathit{Im}(\theta _{\mu }^{s},\mathcal{B})$ contains an
interval with uncountably many points and the proof is completed.
\end{proof}

\section{The spectrum of the Sierpinski gasket \label{Sierpinski_gasket}}

In this section, we shall apply the results obtained in Theorem~\ref%
{Spectrum} to fully characterise the asymptotic spectra of the Sierpinski
gasket $S.$

Recall that the Sierpinski gasket or Sierpinski triangle is a special case
of a self-similar set generated by a system $\Psi =\{f_{0,}f_{1,}f_{2}\}$ of
three contracting similitudes of the plane, with contraction ratios $%
r_{i}:=1/2,$ $i\in M,$ given by 
\begin{equation}
f_{0}(x,y)=\frac{1}{2}(x,y),\quad f_{1}(x,y)=\frac{1}{2}(x,y)+(\frac{1}{2}%
,0)\quad \text{and}\quad f_{2}(x,y)=\frac{1}{2}(x,y)+\frac{1}{2}(\frac{1}{2},%
\frac{\sqrt{3}}{2}).  \label{siersyst}
\end{equation}%
We shall denote by $z_{i}$ the fixed point of each $f_{i},$ $i=0,1,2$ that
is, $z_{0}=(0,0),$ $z_{1}=(1,0)$ and $z_{2}=(\frac{1}{2},\frac{\sqrt{3}}{2}%
), $ and by $T$ the equilateral triangle with vertexes $z_{i},$ $\,i\in M.$

It is well known that $S$ is a connected set that satisfies the OSC and has
similarity dimension $s=\frac{\log 3}{\log 2}.$

Thanks to previous work on the packing and Hausdorff centred measures of the
Sierpinski gasket (cf. \cite{LLMM2} and \cite{LLMM3}), we know that $S$ is
both $P^{s}{\lfloor _{S}}$ and $C^{s}{\lfloor _{S}}$-exact, and we have
fairly precise approximations of the values of $P^{s}(S)$ and $C^{s}(S).$

\subsection{Theoretical results}

\begin{theorem}
\label{Sierpinskispectrum} Let $S$ be the Sierpinski gasket, $\widehat{S}%
=\left\{ y\in S:\left\{ \mathcal{T}^{k}(y):k\in \mathbb{N}\right\} \text{ is
dense in }S\right\} ,$ $\mathcal{B}$ be the collection of typical balls, $%
\mathcal{R}$ be a feasible open set for $S,$ and $\alpha \in \mathcal{M}%
^{s}\lfloor _{S}{.}$ Then, the following statements hold true.\newline
(i) 
\begin{equation}
Spec(\alpha ,y)=Spec(\alpha ,\widehat{S})=Spec(\alpha ,\mathcal{R}\cap S)=%
\mathit{Im}(\theta _{\alpha }^{s},\mathcal{B})=\left[ \frac{\alpha (S)}{%
P^{s}(S)},\frac{\alpha (S)}{C^{s}(S)}\right] ,\text{ }y\in \widehat{S}.
\label{Spectrum-sierp}
\end{equation}%
(ii) $Spec(\alpha ,S)$ is given by the union of two closed intervals of
positive length: 
\begin{equation}
Spec(\alpha ,S)=\left[ \underline{\theta }_{\alpha }^{s}(z_{0}),\overline{%
\theta }_{\alpha }^{s}(z_{0})\right] \cup \left[ \frac{\alpha (S)}{P^{s}(S)},%
\frac{\alpha (S)}{C^{s}(S)}\right] ,  \label{sierpinskispectrum}
\end{equation}%
where $z_{0}=(0,0).$ Furthermore, 
\begin{equation}
\underline{\theta }_{\alpha }^{s}(z_{0})=\min \left\{ \theta _{\alpha
}^{s}(z_{0},d):\frac{1}{2}\leq d\leq 1\right\}  \label{minz0}
\end{equation}%
and 
\begin{equation}
\overline{\theta }_{\alpha }^{s}(z_{0})=\max \left\{ \theta _{\alpha
}^{s}(z_{0},d):\frac{1}{2}\leq d\leq 1\right\} .  \label{maxz0}
\end{equation}
\end{theorem}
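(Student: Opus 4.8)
The plan is to obtain (i) essentially for free from the general theory and to spend the real effort on the boundary analysis in (ii). For (i), I would invoke that $S$ is both $P^{s}\lfloor_{S}$-exact and $C^{s}\lfloor_{S}$-exact, as recalled just before the statement. This is precisely the hypothesis of Corollary~\ref{corol}, so the inclusions of Theorem~\ref{Spectrum}(iii) collapse to equalities, giving $\mathit{Im}(\theta_{\alpha}^{s},\mathcal{B})=Spec(\alpha,\mathcal{R}\cap S)=[\tfrac{\alpha(S)}{P^{s}(S)},\tfrac{\alpha(S)}{C^{s}(S)}]$; chaining this with the identities $Spec(\alpha,y)=Spec(\alpha,\widehat{S})=Spec(\alpha,\mathcal{R}\cap S)$ of Theorem~\ref{Spectrum}(ii) for $y\in\widehat{S}$ yields \eqref{Spectrum-sierp} with no gasket-specific work.

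For (ii) I would first pin down the vertex spectrum. For $d<\tfrac12$ the ball $B(z_{0},d)$ meets $S$ only inside the cylinder $E_{0}$, so $B(z_{0},d)\cap S=f_{0}(B(z_{0},2d)\cap S)$; using the intrinsic similarity scaling $\alpha(f_{0}(A))=2^{-s}\alpha(A)$ of the metric measures, which is valid for every $A\subset S$ and hence does not require $B(z_{0},2d)\subset\mathcal{R}$ (cf.\ Lemma~\ref{scalingselfsimilar}), this gives $\theta_{\alpha}^{s}(z_{0},d)=\theta_{\alpha}^{s}(z_{0},2d)$. Thus $\theta_{\alpha}^{s}(z_{0},\cdot)$ is periodic in $\log_{2}d$ and continuous (since $\alpha(\partial B)=0$, as in the proof of Theorem~\ref{Spectrum}(i)), so its set of limit values as $d\to0$ is exactly its range on one period $[\tfrac12,1]$, i.e.\ the closed interval $[\underline{\theta}_{\alpha}^{s}(z_{0}),\overline{\theta}_{\alpha}^{s}(z_{0})]$ with endpoints as in \eqref{minz0}--\eqref{maxz0}; the threefold symmetry of $S$ then makes $z_{1},z_{2}$ contribute the same interval.

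Next I would establish the inclusion $\subset$ in \eqref{sierpinskispectrum}. Writing $Spec(\alpha,S)=\bigcup_{x\in S}Spec(\alpha,x)$ and taking $\mathcal{R}=\mathrm{int}(T)$, every interior point satisfies $Spec(\alpha,x)\subset Spec(\alpha,\mathcal{R}\cap S)$, which is the typical interval by (i); hence only the boundary points $\partial T\cap S$, namely the three edges, can produce new values, and by symmetry it suffices to treat the bottom edge. For a non-vertex edge point $x=(t_{0},0)$ with $t_{0}\in(0,1)$, once $d$ is small the ball $B(x,d)$ contains no vertex, and iterating the edge renormalisation $\theta_{\alpha}^{s}((t,0),d)=\theta_{\alpha}^{s}((2t,0),2d)$ (valid while the ball meets a single bottom cell) reduces $\theta_{\alpha}^{s}(x,d)$ to the density of a bounded-scale configuration straddling at least two cells; the extreme, thinnest such configuration is the symmetric midpoint $m=(\tfrac12,0)$, where the two equal corners give $\theta_{\alpha}^{s}(m,d)=2\,\theta_{\alpha}^{s}(z_{0},2d)$.

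The hard part will be the lower bound for these edge configurations. Their upper bound is automatic, since \eqref{centradasup} (Lemma~\ref{centeredtypballs}) gives $\theta_{\alpha}^{s}\le\tfrac{\alpha(S)}{C^{s}(S)}$ for every ball; what must be excluded is any value in the gap $(\overline{\theta}_{\alpha}^{s}(z_{0}),\tfrac{\alpha(S)}{P^{s}(S)})$, i.e.\ I must show that each non-vertex edge configuration has density at least $\tfrac{\alpha(S)}{P^{s}(S)}$. Since the midpoint configuration has infimal density $2\underline{\theta}_{\alpha}^{s}(z_{0})$, this reduces to the quantitative inequality $2\underline{\theta}_{\alpha}^{s}(z_{0})\ge\tfrac{\alpha(S)}{P^{s}(S)}$ (and its analogue for the asymmetric configurations). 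I expect to verify it from the explicit self-similar estimates of $\underline{\theta}_{\alpha}^{s}(z_{0})$ obtained in this section together with the value of $P^{s}(S)$ from \cite{LLMM2}; this quantitative input is exactly what forces the gap and thereby makes the spectrum a union of two genuinely separate intervals.
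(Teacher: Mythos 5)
Your part (i) and your analysis of the vertex coincide with the paper's own proof: exactness of $S$ for $P^{s}\lfloor_{S}$ and $C^{s}\lfloor_{S}$ plus Theorem~\ref{Spectrum} and Corollary~\ref{corol} give \eqref{Spectrum-sierp}, and the scaling identity $\theta_{\alpha}^{s}(z_{0},d)=\theta_{\alpha}^{s}(z_{0},2d)$ for $d\leq 1/2$, together with continuity in $d$, identifies $Spec(\alpha,z_{0})$ with the range of $\theta_{\alpha}^{s}(z_{0},\cdot)$ over one period $[\tfrac12,1]$. The genuine gap is your treatment of the non-vertex edge points, and it is created by your choice of feasible open set. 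The paper never takes $\mathcal{R}=\mathrm{int}(T)$; it observes instead that each open rhombus $\mathcal{R}_{i}$ (interior of the union of $T$ with its reflection across the edge opposite $z_{i}$) is \emph{itself} a feasible open set, and that $S=\{z_{0},z_{1},z_{2}\}\cup\big(S\cap\cup_{i=0}^{2}\mathcal{R}_{i}\big)$ (see \eqref{rhombii} and Fig.~\ref{figura1}). An edge point of $T$ is an interior point of the corresponding rhombus, so every non-vertex point of $S$ falls under part (i) applied to some $\mathcal{R}_{i}$, and \eqref{sierpinskispectrum} follows immediately from $Spec(\alpha,S)=\big(\cup_{i}Spec(\alpha,\mathcal{R}_{i}\cap S)\big)\cup\big(\cup_{i}Spec(\alpha,z_{i})\big)$ and symmetry; there is no boundary case left to analyse.

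Your alternative route for the edges cannot be completed as stated, for two concrete reasons. First, the reduction of an arbitrary straddling edge configuration to the symmetric midpoint one is only asserted (``the extreme, thinnest such configuration is the symmetric midpoint''); these configurations form a continuum and no extremality argument is given, nor is the promised ``analogue for the asymmetric configurations''. Second, and decisively, the inequality you propose to check numerically, $2\underline{\theta}_{\mu}^{s}(z_{0})\geq 1/P^{s}(S)$, has no slack: the paper's own data (Tables~\ref{numresults1} and \ref{numresults2}) show that the minimum-density typical ball is centred exactly at the midpoint $(\tfrac12,0)$, with $(P_{14})^{-1}=0.599411$ against $2\underline{\xi}_{14}=0.599428$, and the certified enclosures $[0.599312,0.599526]$ and $[0.598339,0.599816]$ overlap. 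Equality is precisely what one expects, since $\theta_{\mu}^{s}((\tfrac12,0),d)=2\theta_{\mu}^{s}(z_{0},2d)$ and the midpoint balls are typical balls for the rhombus $\mathcal{R}_{2}$, so their densities are bounded below by $1/P^{s}(S)$ by \eqref{packing} --- but that is exactly the rhombus observation your approach avoids. Since the two quantities (very likely) coincide, no rigorous numerical error bounds can ever certify your inequality, and the inclusion $Spec(\alpha,S)\subset\left[\underline{\theta}_{\alpha}^{s}(z_{0}),\overline{\theta}_{\alpha}^{s}(z_{0})\right]\cup\left[\frac{\alpha(S)}{P^{s}(S)},\frac{\alpha(S)}{C^{s}(S)}\right]$ remains unproved in your proposal.
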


\begin{proof}
Our previous work guarantees that $S$ is a $P^{s}$-exact (see \cite{LLMM2})
and $C^{s}$-exact (see \cite{LLMM3}) set. Then, the four equalities in (i)
follow as a consequence of Theorem~\ref{Spectrum} and Corollary~\ref{exact}.

In order to prove \eqref{sierpinskispectrum}, let $\mathcal{R}_{i},$ $i\in
\{0,1,2\}$ be the three open rhombi composed of the topological interior of
the union of the triangle $T$ and its reflection across the edge of $T$
opposite the point $z_{i},$ $i\in M$ (see $\mathcal{R}_{2}$ in Fig.~\ref{figura1}). Using that 
\begin{equation}
S=\left\{ z_{0},z_{1},z_{2}\right\} \cup (S\cap \cup _{i=0}^{2}\mathcal{R}%
_{i}),  \label{rhombii}
\end{equation}%
we obtain%
\begin{align*}
Spec(\alpha ,S)& =Spec(\alpha ,S\cap \cup _{i=0}^{2}\mathcal{R}_{i})\cup %
\big(\cup _{i=0}^{2}Spec(\alpha ,z_{i})\big)= \\
& =\left[ \frac{\alpha (S)}{P^{s}(S)},\frac{\alpha (S)}{C^{s}(S)}\right]
\cup Spec(\alpha ,z_{0}),
\end{align*}%
where the last equality follows from ~(\ref{Spectrum-sierp}), (\ref{rhombii}%
) and the fact that, by symmetry, $Spec(\alpha ,z_{i})$ must be identical
for $i\in \left\{ 0,1,2\right\} .$

\begin{figure}[ht]
\centering\includegraphics[scale=0.4]{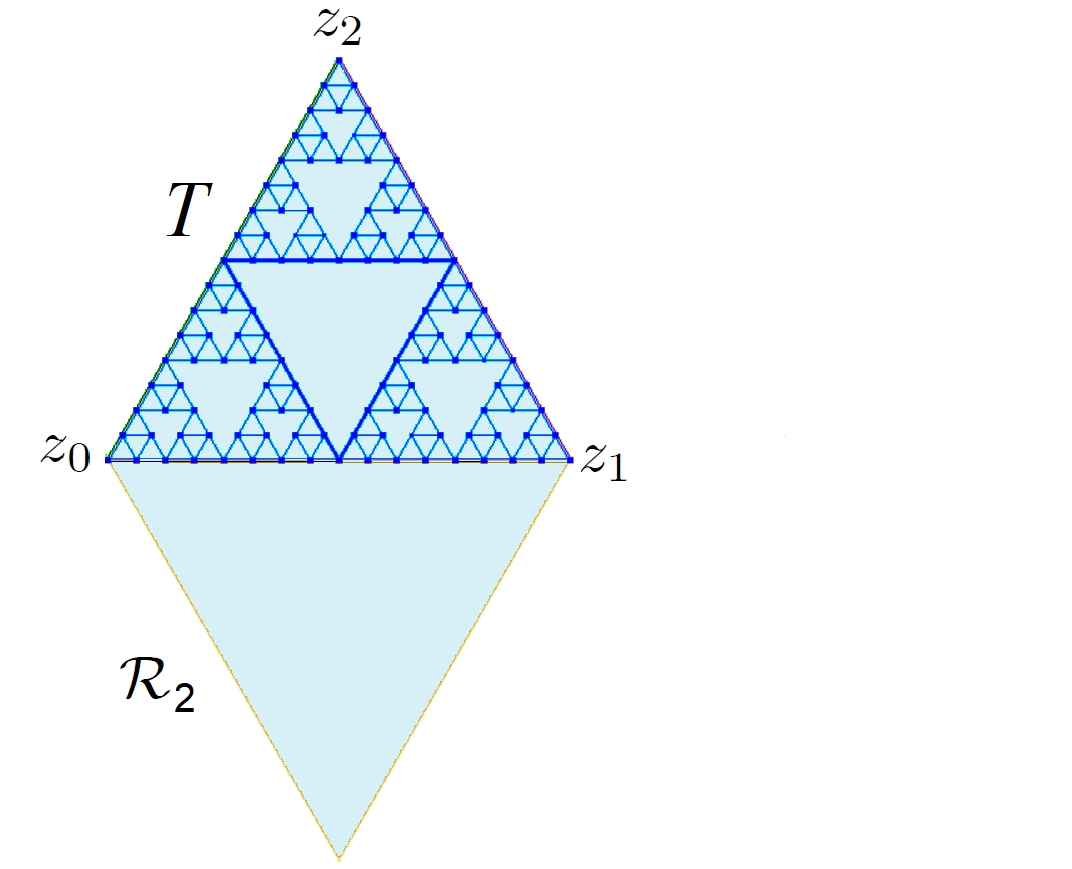}
\caption{A feasible open set. \newline An open rhombus $\mathcal{R}_{2}$ that is a feasible open set for $
S.$}
\label{figura1}
\end{figure}
Observe now that, if $d\leq 1/2,$ then $B(z_{0},d)\cap S=B(z_{0},d)\cap
f_{0}(S).$ Hence, using that $\alpha $ is an $s$-dimensional metric measure 
\begin{align*}
\theta _{\alpha }^{s}(z_{0},d)& =\frac{\alpha (B(z_{0},d)\cap f_{0}(S))}{%
(2d)^{s}}=\frac{\alpha (f_{0}(B(z_{0},2d)\cap S))}{(2d)^{s}} \\
& =\frac{\alpha (B(z_{0},2d)\cap S))}{(4d)^{s}}=\theta _{\alpha
}^{s}(z_{0},2d).
\end{align*}%
If $2d\leq 1/2,$ we can repeat the argument $k$ times until $1/2\leq
2^{k}d\leq 1$ and $\theta _{\alpha }^{s}(z_{0},d)=\theta _{\alpha
}^{s}(z_{0},2^{k}d).$ This shows that 
\begin{equation*}
\min \left\{ \theta _{\alpha }^{s}(z_{0},d):0\leq d\leq 1\right\} =\min
\left\{ \theta _{\alpha }^{s}(z_{0},d):\frac{1}{2}\leq d\leq 1\right\} =%
\underline{\theta }_{\alpha }^{s}(z_{0}),
\end{equation*}%
where the last equality can easily be checked and, analogously, (\ref{maxz0}%
) holds.
\end{proof}

\begin{remark}
\label{exceptional points} Notice that part (i) of Theorem~\ref%
{Sierpinskispectrum} shows that there is a set of full $\alpha $-measure
whose points exhibit a strongly regular behaviour, whereas part {(ii)}
underlines the special local behaviour of the vertexes as the most isolated
points in $S.$ However, the set of exceptional points does not consist only
of the vertexes as there might be other exceptional points, all of them
belonging to the set $\cup _{i=0}^{2}\left( \mathcal{R}_{i}\cap S\right) -%
\widehat{S}.$ The pointwise $\alpha $-density spectrum of such points is
contained in $\left[ \frac{\alpha (S)}{P^{s}(S)},\frac{\alpha (S)}{C^{s}(S)}%
\right] .$ The detection and characterisation of the behaviour of these
points remains an open issue.
\end{remark}

\subsection{Numerical results \label{Sierpinski_gasket-numerical}}

Following the structure of the algorithms developed in \cite{LLM4, LLMM1,
LLMM2, LLMM3} for the numerical estimation of the metric measures of
self-similar sets, the construction of the computational algorithm used in
this work in order to approximate the values of $\underline{\theta }_{\mu
}^{s}(z_{0})$ and $\overline{\theta }_{\mu }^{s}(z_{0})$ relies upon the
discrete approximations of both the Sierpinski gasket and its invariant
measure $\mu .$ Recall that any two measures in $\mathcal{M}^{s}\lfloor _{S}$
\ are mutually multiple of each other (see (\ref{muvshaus})), so we can
obtain $Spec(\alpha ,S)$ from $Spec(\mu ,S)$ if we know $\alpha (S).$

The Sierpinski gasket, as the attractor of $\Psi =\{f_{0},f_{1},f_{2}\}$
(see \eqref{siersyst}), is the unique non-empty compact set that admits the
self-similar decomposition $S=F(S),$ where $F$ is the Hutchinson operator
defined, for $A\subset \mathbb{R}^{2},$ by 
\begin{equation*}
F(A):=f_{0}(A)\cup f_{1}(A)\cup f_{2}(A).
\end{equation*}%
It is well-known that, for any non-empty compact subset $A\subset \mathbb{R}%
^{2},$ $S$ can be built with an arbitrary level of detail by increasing the
iterations $k$ in $F^{k}(A),$ where $F^{k}=F\circ F...\circ F$ is the $k$-th
iterate of the contracting operator $F.$ This is because $F^{k}(A)\overset{%
k\rightarrow \infty }{\rightarrow }S$ in the Hausdorff metric (cf. \cite%
{HUTCH}). Furthermore, if $A\subset S,$ then $F^{k}(A)\subset S$ for any $%
k\in \mathbb{N^{+}}.$ In particular, if we take $A_{1}:=\{z_{0},z_{1},z_{2}%
\} $ as the initial compact set, we obtain the set 
\begin{equation}
A_{k}:=F^{k-1}(A_{1})\subset S,\text{ }k\geq 2,  \label{ak}
\end{equation}%
which approximates $S$ at the iteration $k$ of our algorithm.

The relation between the Markov operator and the natural probability measure 
$\mu _{\mathbf{p}_{s}}$ given in (\ref{markov}), with $s=\frac{\log 3}{\log 2%
}$ and $p_{i}=r_{i}^{s}=3^{-k},$ and (\ref{muvshaus}) leads to the following
relation: 
\begin{equation}
\mathbf{M}_{\mathbf{p}_{s}}^{k}(\alpha )=\frac{1}{3^{k}}\sum_{i\in
M^{k}}\alpha \circ f_{i}^{-1}\overset{w}{\rightarrow }\mu ,\text{ }\quad
\alpha \in \mathcal{P}(\mathbb{R}^{2}).  \label{convergence1}
\end{equation}

If we consider $\mu _{1}:=\frac{1}{3}(\delta _{z_{0}}+\delta _{z_{1}}+\delta
_{z_{2}})$ as an initial measure $\alpha $ in (\ref{convergence1}), where $%
\delta _{x}$ is a unit mass at $x,$ then 
\begin{equation}
\mu _{k}:=\mathbf{M}_{\mathbf{p}_{s}}^{k-1}(\mu _{1})=\frac{1}{3^{k-1}}%
\sum_{i\in M^{k-1}}\mu _{1}\circ f_{i}^{-1}=\frac{1}{3^{k}}\sum_{i\in
M^{k-1}}\left( \delta _{f_{i}(z_{0})}+\delta _{f_{i}(z_{1})}+\delta
_{f_{i}(z_{2})}\right)  \label{mu(k)}
\end{equation}%
is a probability measure supported on $A_{k}\subset S$ and $\mu _{k}\overset{%
w}{\rightarrow }\mu .$ \newline
The discrete measure $\mu _{k}$ is the approximation of the invariant
measure $\mu $ that our algorithm takes at iteration $k.$

Lemmas \ref{union of cylinders} and \ref{lemma cotas} (Lemma \ref{union of
cylinders} is proved in \cite{LLMM3}), provide precise relationships between
the measures $\mu _{k}$ and $\mu .$

\begin{lemma}
\label{union of cylinders} \hfill \break (i) Let $\{S_{i}:i\in I\subset
M^{k}\},$ $k\in\mathbb{N}^{+},$ be a collection of $k$-cylinder sets of $S.$
Then, 
\begin{equation*}
\mu\left( \bigcup\limits_{i\in I}S_{i}\right) \leq\mu_{k}\left(
\bigcup_{i\in I}S_{i}\right)
\end{equation*}
\newline
(ii) Let $A\subset S,$ $k\in\mathbb{N}^{+},$ and let $I=\{i\in
M^{k}:S_{i}\cap A\neq\varnothing\}.$ Then, 
\begin{equation*}
\mu_{k}(A)\leq\mu\left( \bigcup_{i\in I}S_{i}\right)
\end{equation*}

\begin{remark}
\label{open-closed balls}The comparisons between the measures $\mu $ and $%
\mu _{k}$ on collections of cylinders and sets given in the lemma above are
passed to enlarged and reduced balls in part (i) of the next lemma. Since
our algorithms compute only $\mu _{k}$-densities of balls with centres in $%
A_{k}$ (see (\ref{ak})) and with some point of $A_{k}$ in their boundaries,
in part (ii) of this lemma we approximate the $\mu $-measure of a ball
centred at $x$ with the $\mu _{k}$-measure of a ball with its same centre
and with a point of $A_{k}$ at its boundary. \newline
In order to obtain more accurate estimates of $\underline{\theta }_{\mu
}^{s}(z_{0})$ and $\overline{\theta }_{\mu }^{s}(z_{0})$ (as we also do in 
\cite{LLMM2} and \cite{LLMM3} for the estimation of $P^{s}(S)$ and $%
C^{s}(S)),$ it is necessary to consider open balls when searching balls of
minimal $\mu _{k}$-density (see (\ref{densityminz0})), whereas in the search
of balls with maximal $\mu _{k}$-density, the approximating balls must be
taken to be closed balls (see (\ref{densitymaxz0})). In the definition of $%
\underline{\theta }_{\mu }^{s}(\cdot )$ and $\overline{\theta }_{\mu
}^{s}(\cdot ),$ the use of open or closed balls has no relevance because the 
$\mu $-measure of the boundary of any ball is null. However, in the case of
densities of the discrete measures $\mu _{k},$ the values obtained in one or
the other case do actually matter, mainly if $k$ is not large.
\end{remark}
\end{lemma}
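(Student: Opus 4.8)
The plan is to reduce both inequalities to a single counting argument over the explicit atomic description of $\mu _k$ given in (\ref{mu(k)}), combined with two standard facts about the natural measure: each $k$-cylinder satisfies $\mu(S_i)=3^{-k}$, and distinct cylinders of the same level overlap on a $\mu$-null set (both following from (\ref{invariant3}) together with $\mu(\partial\mathcal{O})=0$, as in the proof of Lemma~\ref{scalingselfsimilar}), so that $\mu(\bigcup_{i\in I}S_i)=|I|\,3^{-k}$ for every $I\subset M^k$. The one identity driving everything is that $z_\ell$ is the fixed point of $f_\ell$: for each word $i'\in M^{k-1}$ and letter $\ell\in M$ one has $f_{i'}(z_\ell)=f_{i'}(f_\ell(z_\ell))=f_{i'\ell}(z_\ell)\in S_{i'\ell}$. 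Thus each atom of $\mu_k$, labelled by the index pair $(i',\ell)$ and carrying mass $3^{-k}$, sits inside the $k$-cylinder $S_{i'\ell}$, while the concatenation map $(i',\ell)\mapsto i'\ell$ is a bijection of atom-labels onto $M^k$.

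For part (i) I would fix $I\subset M^k$ and set $U=\bigcup_{i\in I}S_i$. Reading $\mu_k(U)$ off (\ref{mu(k)}) as $3^{-k}$ times the number of pairs $(i',\ell)$ with $f_{i'}(z_\ell)\in U$, I observe that every $j\in I$, written $j=i'\ell$, contributes such a pair, since $f_{i'}(z_\ell)\in S_{i'\ell}=S_j\subset U$. As distinct $j$ give distinct pairs, this already accounts for at least $|I|$ atoms, whence $\mu_k(U)\geq |I|\,3^{-k}=\mu(U)$. Geometrically shared vertices only help, contributing further atoms; one must merely count atom-labels, i.e. index pairs, rather than distinct points of $A_k$.

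For part (ii), with $A\subset S$ and $I=\{i\in M^k:S_i\cap A\neq\varnothing\}$, I run the implication the other way. Each atom of $\mu_k$ meeting $A$ is a pair $(i',\ell)$ with $f_{i'}(z_\ell)\in A$; by the fixed-point identity this point also lies in $S_{i'\ell}$, so $S_{i'\ell}\cap A\neq\varnothing$ and hence $i'\ell\in I$. Since concatenation is injective, the number of such pairs is at most $|I|$, giving $\mu_k(A)\leq |I|\,3^{-k}=\mu(\bigcup_{i\in I}S_i)$.

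The only real care needed — and the main (if modest) obstacle — is the bookkeeping around multiply-counted vertices: in $\mu_k$ a vertex shared by several $(k-1)$-triangles carries mass a multiple of $3^{-k}$, so both arguments must be phrased in terms of the index pairs $(i',\ell)$ indexing the atoms of (\ref{mu(k)}), not the distinct points of $A_k$. Once the counting is organised by these labels, each part collapses to a one-line consequence of the incidence $f_{i'}(z_\ell)\in S_{i'\ell}$ and the injectivity of concatenation, the two supporting measure-theoretic facts being standard for self-similar measures under the OSC.
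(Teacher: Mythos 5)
Your proof is correct. Note that this paper does not actually contain a proof of Lemma~\ref{union of cylinders} --- it is stated with the remark that it ``is proved in \cite{LLMM3}'' --- so there is no in-paper argument to compare against; your argument supplies a self-contained proof, and it is the natural one. The two pillars you identify are exactly the right ones: reading (\ref{mu(k)}) as saying that $\mu_k$ is $3^{-k}$ times counting measure on the \emph{index pairs} $(i',\ell)\in M^{k-1}\times M$ (so that vertices of $A_k$ shared by several $(k-1)$-cylinders are handled by counting labels with multiplicity, not distinct points), and the incidence $f_{i'}(z_\ell)=f_{i'\ell}(z_\ell)\in S_{i'\ell}$, which holds because $z_\ell$ is the fixed point of $f_\ell$; together with the bijectivity of concatenation $M^{k-1}\times M\to M^k$, parts (i) and (ii) are then the two directions of the same counting. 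One small accounting refinement: for part (i) you only need $\mu\bigl(\bigcup_{i\in I}S_i\bigr)\leq |I|\,3^{-k}$, which follows from subadditivity and $\mu(S_i)=3^{-k}$ alone; the null-overlap property of distinct $k$-cylinders is genuinely needed only in part (ii), where the lower bound $\mu\bigl(\bigcup_{i\in I}S_i\bigr)= |I|\,3^{-k}$ is required. That property is standard under the SOSC (and is implicitly used throughout the paper), but for the Sierpinski gasket it also has a one-line justification avoiding any boundary argument: two distinct $k$-cylinders are equilateral triangles meeting in at most one point, and $\mu$ is non-atomic since $\mu(\{x\})\leq\mu(S_{i(k)})=3^{-k}$ for the nested cylinders containing $x$.
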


From now on, we shall use the notation $\mathring{B}(x,d):=\{y\in \mathbb{R}%
^{2}:|x-y|<d\}$ and $\mathring{\theta}_{\alpha }^{s}$ for the $s$-density of 
$\alpha $ defined using open balls.

\begin{lemma}
\label{lemma cotas}Let $k>0,$ $x\in \mathbb{R}^{2},$ and $2^{-k}<d\leq
\max_{i\in M}\left\Vert z_{i}-x\right\Vert .$ Then,\newline
{(i)} $\mu _{k}(B(x,d-2^{-k}))\leq \mu (B(x,d))\leq \mu _{k}(\mathring{B}%
(x,d+2^{-k}))$\newline
{(ii)} If $B(x,d)\cap A_{k}\neq \varnothing ,$ then there are points $y_{k}$
and $z_{k}$ in $A_{k}$ such that 
\begin{equation*}
\mu _{k}\left( \mathring{B}(x,d_{y_{k}})\right) \leq \mu (B(x,d))\leq \mu
_{k}(B(x,d_{z_{k}})),
\end{equation*}%
where $d_{y_{k}}:=\left\vert y_{k}-x\right\vert ,$ $d_{z_{k}}:=\left\vert
z_{k}-x\right\vert ,$ and $\{d_{y_{k}},d_{z_{k}}\}\in \lbrack
d-2^{-k},d+2^{-k}].$
\end{lemma}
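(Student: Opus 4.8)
The plan is to derive both double inequalities from Lemma~\ref{union of cylinders} by \emph{inflating} and \emph{deflating} the ball $B(x,d)$ by the common diameter $2^{-k}$ of the level-$k$ cylinders of $S$. The two elementary geometric facts I would use repeatedly are: (a) every $k$-cylinder $S_i$, $i\in M^k$, has $|S_i|=2^{-k}$, so if $S_i$ meets a closed ball $\overline{B}(x,\rho)$ then $S_i\subset\overline{B}(x,\rho+2^{-k})$; and (b) each $S_i$ contains exactly one point of $A_k$, the vertex $f_{i_1\cdots i_{k-1}}(z_{i_k})$, whence $A_k$ is a $2^{-k}$-net of $S$. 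Throughout, $\mu$ is carried by $S$ and $\mu_k$ by the finite set $A_k$.

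For the lower bound in (i), I would apply Lemma~\ref{union of cylinders}(ii) to $A:=B(x,d-2^{-k})$. Writing $I=\{i\in M^k: S_i\cap B(x,d-2^{-k})\neq\varnothing\}$, the lemma gives $\mu_k(B(x,d-2^{-k}))\le\mu(\bigcup_{i\in I}S_i)$. By fact (a) each such $S_i$ lies in $\overline{B}(x,d)=B(x,d)$, so $\bigcup_{i\in I}S_i\subset B(x,d)$ and hence $\mu(\bigcup_{i\in I}S_i)\le\mu(B(x,d))$; this step is clean and uses only closed balls.

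The upper bound in (i) is the delicate one. I would cover $B(x,d)\cap S$ by $J=\{i\in M^k:S_i\cap B(x,d)\neq\varnothing\}$, so that $\mu(B(x,d))\le\mu(\bigcup_{i\in J}S_i)$, and then invoke Lemma~\ref{union of cylinders}(i) to get $\mu(\bigcup_{i\in J}S_i)\le\mu_k(\bigcup_{i\in J}S_i)$. By fact (a), $\bigcup_{i\in J}S_i\subset\overline{B}(x,d+2^{-k})$, which already yields the bound with a \emph{closed} inflated ball. The main obstacle is upgrading this to the \emph{open} ball $\mathring{B}(x,d+2^{-k})$ demanded by the statement (and singled out in Remark~\ref{open-closed balls}): I must rule out an atom of $\mu_k$ sitting exactly on the sphere $\partial B(x,d+2^{-k})$. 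Here I would use fact (b): the only $A_k$-atom of $S_i$ is its vertex $v_i=f_{i_1\cdots i_{k-1}}(z_{i_k})$, and $|v_i-x|=d+2^{-k}$ would force $v_i$ and some $p\in S_i\cap\overline{B}(x,d)$ to realise simultaneously the diameter of $S_i$ and $|p-x|=d$ with $x,p,v_i$ collinear; the hypothesis $2^{-k}<d\le\max_i\|z_i-x\|$ together with the dyadic geometry of $S$ is what excludes this boundary coincidence, so every relevant atom lies strictly inside and $\mu_k(\bigcup_{i\in J}S_i)\le\mu_k(\mathring{B}(x,d+2^{-k}))$.

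Finally, for (ii) I would convert the inflated and deflated radii of (i) into distances to genuine atoms of $A_k$. Since $\mu_k$ is supported on the finite set $A_k$, its spherical mass only jumps at radii $|a-x|$, $a\in A_k$; I set $d_{z_k}=\max\{|a-x|:a\in A_k,\ |a-x|<d+2^{-k}\}$ and $d_{y_k}=\min\{|a-x|:a\in A_k,\ |a-x|>d-2^{-k}\}$, so that $\mu_k(\mathring{B}(x,d+2^{-k}))=\mu_k(B(x,d_{z_k}))$ and $\mu_k(B(x,d-2^{-k}))=\mu_k(\mathring{B}(x,d_{y_k}))$, and combine these with (i). It then remains to check $d_{y_k},d_{z_k}\in[d-2^{-k},d+2^{-k}]$: since $S$ is connected and $d$ lies between $\min_{S}|\cdot-x|$ (which is $\le d$, because $B(x,d)\cap A_k\neq\varnothing$) and $\max_{S}|\cdot-x|=\max_i\|z_i-x\|\ge d$, the intermediate value theorem yields $y^{\ast}\in S$ with $|y^{\ast}-x|=d$, and fact (b) supplies an $A_k$-atom within $2^{-k}$ of $y^{\ast}$, i.e. in the annulus $[d-2^{-k},d+2^{-k}]$; this forces the extremal atom-distances $d_{y_k},d_{z_k}$ into that same annulus, completing the proof.
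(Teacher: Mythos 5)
Your lower bound in (i) is exactly the paper's argument (your family $I$ is the paper's $H_{k}$), and your part (ii) follows the same strategy as the paper's (extremal atom distances plus a connectedness argument; your intermediate--value formulation is a reasonable substitute for the paper's proof that some cylinder meets $\partial B(x,d)$). The genuine gap is in the upper bound of (i), precisely at the step you yourself flag as delicate: your claim that the hypotheses exclude an $A_{k}$-atom sitting exactly on $\partial B(x,d+2^{-k})$ is false, and the collinear configuration you assert cannot occur is realisable under the stated hypotheses. Concretely, take the corner cylinder $S_{0^{k}}:=f_{0}^{k}(S)$; its unique $A_{k}$-atom is $z_{0}=(0,0)$, and its vertex $u:=(2^{-k},0)$ satisfies $\left\Vert z_{0}-u\right\Vert =2^{-k}=\left\vert S_{0^{k}}\right\vert $. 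Put $x:=(d+2^{-k},0)$ with any $d>2^{-k}$; the hypothesis $d\leq \max_{i}\left\Vert z_{i}-x\right\Vert $ holds because $\left\Vert z_{0}-x\right\Vert =d+2^{-k}$. Then $u\in S_{0^{k}}\cap B(x,d)$ (it lies on $\partial B(x,d)$), so $S_{0^{k}}$ belongs to your family $J$, while its atom $z_{0}$ lies exactly on the sphere $\partial B(x,d+2^{-k})$. Hence the step $\mu _{k}(\cup _{i\in J}S_{i})\leq \mu _{k}(\mathring{B}(x,d+2^{-k}))$ cannot be established along your route; your chain only proves the closed-ball bound $\mu (B(x,d))\leq \mu _{k}(B(x,d+2^{-k}))$, which is strictly weaker than the statement, and, as Remark~\ref{open-closed balls} stresses, the open ball is exactly what the numerical application (Theorem~\ref{densapp}) requires.

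The repair, which is what the paper does, is to make the open/closed adjustment on the small ball rather than on the inflated one. Since $\mu $ is a self-similar measure, every sphere is $\mu $-null (this fact, with the citation to Mattila, is already used in the proof of Theorem~\ref{quasitangent}), so $\mu (B(x,d))=\mu (\mathring{B}(x,d)\cap S)$. If a cylinder $S_{i},$ $i\in M^{k},$ meets $\mathring{B}(x,d)$ at a point $y,$ then $\left\Vert y-x\right\Vert <d$ strictly, so $S_{i}\subset B(y,2^{-k})\subset \mathring{B}(x,d+2^{-k})$: strictness survives the inflation. Thus, with $G_{k}:=\{i\in M^{k}:S_{i}\subset \mathring{B}(x,d+2^{-k})\}$ one gets $\mathring{B}(x,d)\cap S\subset \cup _{i\in G_{k}}S_{i}\subset \mathring{B}(x,d+2^{-k}),$ and Lemma~\ref{union of cylinders}~(i) yields $\mu (B(x,d))\leq \mu (\cup _{i\in G_{k}}S_{i})\leq \mu _{k}(\cup _{i\in G_{k}}S_{i})\leq \mu _{k}(\mathring{B}(x,d+2^{-k}))$. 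Two smaller points on (ii): it invokes (i), so it inherits the gap; and your strict-inequality definition $d_{z_{k}}=\max \{\left\vert a-x\right\vert :a\in A_{k},\ \left\vert a-x\right\vert <d+2^{-k}\}$ is not covered by your annulus argument in the degenerate situation above, since the witnessing atom may sit at distance exactly $d+2^{-k}$ and then does not qualify for your maximum. The paper avoids this by taking the nonstrict maximum over $A_{k}\cap B(x,d+2^{-k})$ and pairing it with the closed-ball consequence $\mu (B(x,d))\leq \mu _{k}(B(x,d_{z_{k}}))$ of (i), which always lands in $[d-2^{-k},d+2^{-k}]$.
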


\begin{proof}
\hfill \break (i) Let 
\begin{equation*}
H_{k}:=\{i\in M^{k}:B(x,d-2^{-k})\cap S_{i}\neq \varnothing \}
\end{equation*}%
For any $i\in H_{k},$ $S_{i}\subset B(x,d)$ holds, so $\cup _{i\in
H_{k}}S_{i}\subset B(x,d).$ Using Lemma \ref{union of cylinders} (ii), we
have 
\begin{equation*}
\mu _{k}(B(x,d-2^{-k}))\leq \mu (\cup _{i\in H_{k}}S_{i})\leq \mu (B(x,d)).
\end{equation*}%
Let%
\begin{equation*}
G_{k}:=\{i\in M^{k}:S_{i}\subset \mathring{B}(x,d+2^{-k})\}.
\end{equation*}%
Then, $\mathring{B}(x,d)\cap S\subset \cup _{i\in G_{k}}S_{i}$ and $\cup
_{i\in G_{k}}S_{i}\subset \mathring{B}(x,d+2^{-k}).$ Using Lemma \ref{union
of cylinders} (i), \ we get%
\begin{equation*}
\mu (B(x,d))=\mu (\mathring{B}(x,d)\cap S)\leq \mu (\cup _{i\in
G_{k}}S_{i})\leq \mu _{k}(\cup _{i\in G_{k}}S_{i})\leq \mu _{k}(\mathring{B}%
(x,d+2^{-k}))
\end{equation*}%
(ii) Let $d^{\ast }=\max_{i\in M}\left\Vert z_{i}-x\right\Vert .$ If $%
S\subset B(x,d),$ then $d=d^{\ast }$ and $\mu (B(x,d^{\ast }))=1=\mu
_{k}(B(x,d^{\ast }))>\mu _{k}((\mathring{B}(x,d^{\ast })),$ so property (ii)
holds for $d_{y_{k}}=d_{z_{k}}=d^{\ast }.$ Let us now assume that $%
S\nsubseteq B(x,d).$ We prove first that 
\begin{equation}
F_{k}:=\{i\in M^{k}:\partial B(x,d)\cap S_{i}\neq \varnothing \}\neq
\varnothing .  \label{6}
\end{equation}%
If $F_{k}=\varnothing ,$ then 
\begin{equation*}
\cup _{i\in M^{k}}S_{i}\subset \mathring{B}(x,d)\cup (B(x,d))^{c}.
\end{equation*}%
We know that $(\cup _{i\in M^{k}}S_{i})\cap \mathring{B}(x,d)\neq
\varnothing $ because $B(x,d)\cap A_{k}\neq \varnothing $ and $%
F_{k}=\varnothing ,$ and we also know that $(\cup _{i\in M^{k}}S_{i})\cap
(B(x,d))^{c}\neq \varnothing $ because $S\nsubseteq B(x,d)$ and $%
F_{k}=\varnothing .$ This contradicts that $\cup _{i\in M^{k}}S_{i}$ is a
connected set, and (\ref{6}) must hold.\newline
Using (i), we have that%
\begin{equation*}
\mu (B(x,d))\leq \mu _{k}(B(x,d+2^{-k}))=\mu _{k}(B(x,d_{z_{k}})),
\end{equation*}%
where $z_{k}$ satisfies $d_{z_{k}}=\left\Vert z_{k}-x\right\Vert $ with 
\begin{equation*}
d_{z_{k}}=\max \{\left\Vert y-x\right\Vert :\text{ }y\in A_{k}\cap
B(x,d+2^{-k})\}.
\end{equation*}%
The inequality $d_{z_{k}}\leq d+2^{-k}$ is obvious, and $d_{z_{k}}\geq
d-2^{-k}$ follows because $F_{k}\neq \varnothing $ and each $k$-cylinder $%
S_{i},$ $i\in M^{k}$ contains some point in $A_{k}.$\newline
Using the first inequality in (i), we have%
\begin{equation*}
\mu (B(x,d))\geq \mu _{k}(B(x,d-2^{-k}))=\mu _{k}(\mathring{B}(x,d_{y_{k}})),
\end{equation*}%
where $y_{k}$ satisfies $d_{y_{k}}=\left\Vert y_{k}-x\right\Vert $ with 
\begin{equation*}
d_{y_{k}}=\min \{\left\Vert y-x\right\Vert :\text{ }y\in A_{k}\cap \left(
B(x,d-2^{-k})\right) ^{c}\}.
\end{equation*}%
The inequality $d_{y_{k}}\geq d-2^{-k}$ is obvious, and $d_{y_{k}}\leq
d+2^{-k}$ follows because $F_{k}\neq \varnothing .$
\end{proof}

Theorem \ref{Sierpinskispectrum} allows us to characterise $Spec(\alpha ,S)$
for $\alpha \in \{\mu ,\mathcal{\ }P^{s}\lfloor _{S},$ $C^{s}\lfloor _{S}\}$
through only four numbers, namely, $\underline{\theta }_{\mu }^{s}(z_{0}),%
\overline{\theta }_{\mu }^{s}(z_{0}),$ $P^{s}(S)$ and $C^{s}(S).$ Thanks to
previous numerical work that uses the measures $\mu _{k}$ and the sets $%
A_{k} $ (see (\ref{mu(k)}) and (\ref{ak})) as approximations of $\mu $ and $%
S,$ respectively, we have estimates given by our algorithms $P_{k}$ of $%
P^{s}(S)$ (see \cite{LLMM2}) and $C_{k}$ of $C^{s}(S)$ (see \cite{LLMM3})
and precise error bounds for such estimates. We show in Theorem~\ref{densapp}
below how to obtain estimates $\underline{\xi }_{k}$ of $\underline{\theta }%
_{\mu }^{s}(z_{0})$ and $\overline{\xi }_{k}$ of $\overline{\theta }_{\mu
}^{s}(z_{0}),$ that such estimates converge to the real values, and we give
accurate bounds for them, that is $\underline{\theta }_{\mu }^{s}(z_{0})\in
\lbrack \underline{\xi }_{k}^{\inf },\underline{\xi }_{k}^{\sup }]$ and $%
\overline{\theta }_{\mu }^{s}(z_{0})\in \lbrack \overline{\xi }_{k}^{\inf },%
\overline{\xi }_{k}^{\sup }]$ (see the definition of $\underline{\xi }_{k},$ 
$\overline{\xi }_{k}$ and of the intervals $[\underline{\xi }_{k}^{\inf },%
\underline{\xi }_{k}^{\sup }]$ and $[\overline{\xi }_{k}^{\inf },\overline{%
\xi }_{k}^{\sup }]$ in Theorem~\ref{densapp}). This allows us to implement
an algorithm along the lines of those developed for the estimation of $%
C^{s}(S)$ and $P^{s}(S)$ (see \cite{LLMM2, LLMM3}).

\begin{theorem}
\label{densapp} For $k>1,$ let 
\begin{equation}
\underline{\xi }_{k}:=\min \left\{ \mathring{\theta}_{\mu _{k}}^{s}(z_{0},d):%
\text{ }d=\left\vert x-z_{0}\right\vert ,\text{ }x\in A_{k},\text{ }d\in
\lbrack \frac{1}{2}-2^{-k},1]\right\}  \label{densityminz0}
\end{equation}%
and 
\begin{equation}
\overline{\xi }_{k}:=\max \left\{ \theta _{\mu _{k}}^{s}(z_{0},d):\text{ }%
d=\left\vert x-z_{0}\right\vert ,\text{ }x\in A_{k},\text{ }d\in \lbrack 
\frac{1}{2}-2^{-k},1]\right\}  \label{densitymaxz0}
\end{equation}%
be the estimates of $\underline{\theta }_{\mu }^{s}(z_{0})$ and $\overline{%
\theta }_{\mu }^{s}(z_{0}),$ respectively. Let $d_{k}$ be such that $%
\mathring{\theta}_{\mu _{k}}^{s}(z_{0},d_{k})=\underline{\xi }_{k},$ and let 
$D_{k}$ be such that $\theta _{\mu _{k}}^{s}(z_{0},D_{k})=\overline{\xi }%
_{k}.$ \newline
Then, 
\begin{equation}
\{\underline{\theta }_{\mu }^{s}(z_{0}),\underline{\xi }_{k}\}\in \lbrack 
\underline{\xi }_{k}^{\inf },\underline{\xi }_{k}^{\sup }],
\label{thetainfbounds}
\end{equation}%
and 
\begin{equation}
\{\overline{\theta }_{\mu }^{s}(z_{0}),\overline{\xi }_{k}\}\in \lbrack 
\overline{\xi }_{k}^{\inf },\overline{\xi }_{k}^{\sup }],
\label{thetasupbounds}
\end{equation}%
where 
\begin{equation}
\underline{\xi }_{k}^{\inf }=\underline{K}_{k}\underline{\xi }_{k},\quad 
\underline{K}_{k}=(1-2^{1-k})^{s},\quad \underline{\xi }_{k}^{\sup }=\frac{%
\mu _{k}(\mathring{B}(z_{0},d_{k}+2^{-k}))}{(2d_{k})^{s}},
\label{theta bounds and k}
\end{equation}%
\begin{equation}
\overline{\xi }_{k}^{\inf }=\frac{\mu _{k}(B(z_{0},D_{k}-2^{-k}))}{%
(2D_{k})^{s}},\quad \overline{K}_{k}=(1+2^{1-k})^{s},\quad \overline{\xi }%
_{k}^{\sup }=\overline{K}_{k}\overline{\xi }_{k}.
\label{thetasup bounds and k}
\end{equation}
\end{theorem}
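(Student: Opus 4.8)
The plan is to establish the four membership relations packed into \eqref{thetainfbounds} and \eqref{thetasupbounds} by combining three ingredients: the reduction of $\underline{\theta}_{\mu}^{s}(z_{0})$ and $\overline{\theta}_{\mu}^{s}(z_{0})$ to extrema of $\theta_{\mu}^{s}(z_{0},\cdot)$ over $d\in[1/2,1]$ supplied by Theorem~\ref{Sierpinskispectrum}, the two-sided comparison $\mu_{k}(B(x,d-2^{-k}))\le\mu(B(x,d))\le\mu_{k}(\mathring{B}(x,d+2^{-k}))$ from Lemma~\ref{lemma cotas}(i), and the step-function nature of $\mu_{k}$, which is supported on the finite set $A_{k}$. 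I would dispatch the two statements about the estimates themselves first. The inclusion $\underline{\xi}_{k}\in[\underline{\xi}_{k}^{\inf},\underline{\xi}_{k}^{\sup}]$ follows because $\underline{\xi}_{k}^{\inf}=\underline{K}_{k}\underline{\xi}_{k}$ with $\underline{K}_{k}=(1-2^{1-k})^{s}\le1$, while $\mu_{k}(\mathring{B}(z_{0},d_{k}))\le\mu_{k}(\mathring{B}(z_{0},d_{k}+2^{-k}))$ turns the defining identity $\mathring{\theta}_{\mu_{k}}^{s}(z_{0},d_{k})=\underline{\xi}_{k}$ into $\underline{\xi}_{k}\le\underline{\xi}_{k}^{\sup}$; symmetrically $\overline{\xi}_{k}\in[\overline{\xi}_{k}^{\inf},\overline{\xi}_{k}^{\sup}]$ uses $\overline{K}_{k}\ge1$ and $\mu_{k}(B(z_{0},D_{k}-2^{-k}))\le\mu_{k}(B(z_{0},D_{k}))$.

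For the true density $\underline{\theta}_{\mu}^{s}(z_{0})$ the upper bound is immediate: minimality gives $\underline{\theta}_{\mu}^{s}(z_{0})\le\theta_{\mu}^{s}(z_{0},d_{k})$, and the upper half of Lemma~\ref{lemma cotas}(i) replaces $\mu(B(z_{0},d_{k}))$ by $\mu_{k}(\mathring{B}(z_{0},d_{k}+2^{-k}))$, which is precisely $\underline{\xi}_{k}^{\sup}$ after dividing by $(2d_{k})^{s}$. For the lower bound I would fix a minimiser $d^{\ast}\in[1/2,1]$, apply the lower half of Lemma~\ref{lemma cotas}(i) to obtain $\mu(B(z_{0},d^{\ast}))\ge\mu_{k}(B(z_{0},d^{\ast}-2^{-k}))$, and then rewrite that closed $\mu_{k}$-ball as the open ball $\mathring{B}(z_{0},\delta)$ whose radius $\delta$ is the smallest support distance exceeding $d^{\ast}-2^{-k}$ (the two balls carry identical $\mu_{k}$-mass because $A_{k}$ is finite). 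Since $\delta$ is an admissible radius of the form $|x-z_{0}|$, extremality gives $\mu_{k}(\mathring{B}(z_{0},\delta))\ge\underline{\xi}_{k}(2\delta)^{s}$, and dividing by $(2d^{\ast})^{s}$ with $\delta/d^{\ast}>(d^{\ast}-2^{-k})/d^{\ast}\ge1-2^{1-k}$ produces the factor $\underline{K}_{k}$, i.e.\ $\underline{\theta}_{\mu}^{s}(z_{0})\ge\underline{\xi}_{k}^{\inf}$. The analysis of $\overline{\theta}_{\mu}^{s}(z_{0})$ is the mirror image: its lower bound comes from $\overline{\theta}_{\mu}^{s}(z_{0})\ge\theta_{\mu}^{s}(z_{0},D_{k})$ and the lower half of Lemma~\ref{lemma cotas}(i); its upper bound fixes a maximiser $d^{\ast\ast}\in[1/2,1]$, bounds $\mu(B(z_{0},d^{\ast\ast}))$ above by $\mu_{k}(\mathring{B}(z_{0},d^{\ast\ast}+2^{-k}))$, rewrites this as the closed ball $B(z_{0},\delta')$ with $\delta'$ the largest support distance below $d^{\ast\ast}+2^{-k}$, and invokes extremality of $\overline{\xi}_{k}$ together with $\delta'/d^{\ast\ast}<1+2^{1-k}$.

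I expect the main obstacle to be the careful bookkeeping between open and closed balls forced by the atomic nature of $\mu_{k}$, and specifically checking that the auxiliary radius ($\delta$ in the lower case, $\delta'$ in the upper case) genuinely lies in the admissible window $[1/2-2^{-k},1]$, so that the extremal characterisations of $\underline{\xi}_{k}$ and $\overline{\xi}_{k}$ may legitimately be applied. The upper constraint $\delta,\delta'\le1$ holds because every point of $A_{k}\subset S$ lies within distance $1$ of $z_{0}$. The lower constraint is the delicate point: I would secure it by noting that $A_{k}$ always contains the vertex $z_{1}$ at distance $1$ and the edge-midpoint $(1/2,0)=f_{0}(z_{1})\in f_{0}(A_{k-1})\subset A_{k}$ at distance exactly $1/2$, so that a support point sits at distance $1/2\in(1/2-2^{-k},d^{\ast\ast}+2^{-k})$ and forces $\delta'\ge1/2$; in the lower case $\delta>d^{\ast}-2^{-k}\ge1/2-2^{-k}$ is automatic, and $\delta\le1$ follows from the support point at distance $1$. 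Once these inclusions are verified, the remaining work reduces to the elementary ratio estimates $1-2^{1-k}\le\delta/d^{\ast}$ and $\delta'/d^{\ast\ast}\le1+2^{1-k}$, both consequences of $d^{\ast},d^{\ast\ast}\ge1/2$.
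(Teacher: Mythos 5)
Your proposal is correct and follows essentially the same route as the paper's proof: the easy halves of \eqref{thetainfbounds} and \eqref{thetasupbounds} come from the optimal radii $d_{k},D_{k}$ together with Lemma~\ref{lemma cotas}(i) and \eqref{minz0}--\eqref{maxz0}, and the hard halves come from the true extremising radius in $[\frac{1}{2},1]$, a shift to a nearby support distance of $\mu _{k}$, extremality of $\underline{\xi }_{k},\overline{\xi }_{k}$, and the ratio bound $(1\pm 2^{1-k})^{s}$ forced by $d\geq \frac{1}{2}$. The only real difference is that where you construct the auxiliary support-distance radii $\delta ,\delta ^{\prime }$ by hand (and check the admissibility window $[\frac{1}{2}-2^{-k},1]$ explicitly via the points $z_{1}$ and $(\frac{1}{2},0)\in A_{k}$), the paper simply invokes Lemma~\ref{lemma cotas}(ii), whose proof is precisely your construction.
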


\begin{proof}
That $\underline{\xi }_{k}\in \lbrack \underline{\xi }_{k}^{\inf },%
\underline{\xi }_{k}^{\sup }]$ and $\overline{\xi }_{k}\in \lbrack \overline{%
\xi }_{k}^{\inf },\overline{\xi }_{k}^{\sup }]$ is obvious from the
definitions.\newline
We prove first that $\underline{\theta }_{\mu }^{s}(z_{0})\in \lbrack 
\underline{\xi }_{k}^{\inf },\underline{\xi }_{k}^{\sup }].$ Using Lemma \ref%
{lemma cotas}~(i) and (\ref{minz0}), we obtain 
\begin{equation*}
\underline{\theta }_{\mu }^{s}(z_{0})\leq \frac{\mu (B(z_{0},d_{k}))}{%
(2d_{k})^{s}}\leq \frac{\mu _{k}(\mathring{B}(z_{0},d_{k}+2^{-k}))}{%
(2d_{k})^{s}}=\underline{\xi }_{k}^{\sup }.
\end{equation*}%
Let $d\in \lbrack \frac{1}{2},1]$ be such that $\underline{\theta }_{\mu
}^{s}(z_{0})=\frac{\mu (B(z_{0},d))}{(2d)^{s}}.$ Lemma \ref{lemma cotas}%
~(ii) guarantees the existence of $y_{k}\in A_{k}$ such that $\mu
(B(z_{0},d))\geq \mu _{k}(\mathring{B}(z_{0},d_{y_{k}})),$ where $%
d_{y_{k}}:=\left\vert y_{k}-z_{0}\right\vert \in \lbrack
d-2^{-k},d+2^{-k}]\subset \lbrack \frac{1}{2}-2^{-k},1].$ This, together
with (\ref{densityminz0}) and the inequality $d\geq \frac{1}{2}$ gives 
\begin{align*}
\underline{\theta }_{\mu }^{s}(z_{0})& =\frac{\mu (B(z_{0},d))}{(2d)^{s}}%
\geq \frac{\mu _{k}(\mathring{B}(z_{0},d_{y_{k}}))}{(2d)^{s}}=\left( \frac{%
d_{y_{k}}}{d}\right) ^{s}\frac{\mu _{k}(\mathring{B}(z_{0},d_{y_{k}}))}{%
(2d_{y_{k}})^{s}} \\
& \geq \left( \frac{d_{y_{k}}}{d}\right) ^{s}\underline{\xi }_{k}\geq \left( 
\frac{d-2^{-k}}{d}\right) ^{s}\underline{\xi }_{k}\geq \underline{\xi }%
_{k}^{\inf }.
\end{align*}%
The proof that $\overline{\theta }_{\mu }^{s}(z_{0})\in \lbrack \overline{%
\xi }_{k}^{\inf },\overline{\xi }_{k}^{\sup }]$ is analogous. Using Lemma %
\ref{lemma cotas}~(i) and (\ref{maxz0}), we obtain%
\begin{equation*}
\overline{\theta }_{\mu }^{s}(z_{0})\geq \frac{\mu (B(z_{0},D_{k}))}{%
(2D_{k})^{s}}\geq \frac{\mu _{k}(B(z_{0},D_{k}-2^{-k}))}{(2D_{k})^{s}}=%
\overline{\xi }_{k}^{\inf }.
\end{equation*}%
Let $D\in \lbrack \frac{1}{2},1]$ be such that $\overline{\theta }_{\mu
}^{s}(z_{0})=\frac{\mu (B(z_{0},D))}{(2D)^{s}}.$ Lemma \ref{lemma cotas}%
~(ii) guarantees the existence of $z_{k}\in A_{k}$ such that $\mu
(B(z_{0},D))\leq \mu _{k}(B(z_{0},d_{z_{k}})),$ where $d_{z_{k}}:=\left\vert
z_{k}-z_{0}\right\vert \in \lbrack D-2^{-k},D+2^{-k}]\subset \lbrack \frac{1%
}{2}-2^{-k},1].$ This, together with (\ref{densitymaxz0}) and the inequality 
$D\geq \frac{1}{2}$ gives 
\begin{align*}
\overline{\theta }_{\mu }^{s}(z_{0})& =\frac{\mu (B(z_{0},D))}{(2D)^{s}}\leq 
\frac{\mu _{k}(B(z_{0},d_{z_{k}}))}{(2D)^{s}} \\
& =\left( \frac{d_{z_{k}}}{D}\right) ^{s}\frac{\mu _{k}(B(z_{0},d_{z_{k}}))}{%
(2d_{z_{k}})^{s}}\leq \left( \frac{d_{z_{k}}}{D}\right) ^{s}\overline{\xi }%
_{k} \\
& \leq \left( \frac{D+2^{-k}}{D}\right) ^{s}\overline{\xi }_{k}\leq 
\overline{\xi }_{k}^{\sup }.
\end{align*}
\end{proof}

We present in Table \ref{numresults1} the estimates $\underline{\xi }_{k},$
and $\overline{\xi }_{k}$ of $\underline{\theta }_{\mu }^{s}(z_{0})$ and $%
\overline{\theta }_{\mu }^{s}(z_{0})$ (see (\ref{thetainfbounds}) and (\ref%
{thetasupbounds}) for definitions)$,$ respectively, and the corresponding
lower and upper bounds in the 100\% confidence intervals $[\underline{\xi }%
_{k}^{\inf },\underline{\xi }_{k}^{\sup }],$ $[\overline{\xi }_{k}^{\inf },%
\overline{\xi }_{k}^{\sup }]$ (see (\ref{thetasup bounds and k}),(\ref%
{thetasupbounds})) obtained by our algorithm for $k=14$ (see the definition
these values in (\ref{densityminz0}), (\ref{densitymaxz0}), (\ref{theta
bounds and k}) and (\ref{thetasup bounds and k})). We also provide the
radii, $d_{k}$ and $D_{k},$ of the $\mu _{k}$-optimal balls.

See in Fig.~\ref{densgraphs1} the graph of the function $\theta _{\mu
_{14}}^{s}(z_{0},d)$ as a function of $d\in \lbrack \varepsilon ,1],$ and in
Fig.~\ref{densgraphs2} the points $(g(d),\theta _{\mu _{14}}^{s}(z_{0},d)),$
where $g(d):=\varepsilon +\frac{\varepsilon -1}{\log (\varepsilon )}(\log
(d)-\log (\varepsilon ))$ and $\varepsilon =0.05.$ This is a suitable
logarithmic scale, \cite{BAN}, which allows us to see the periodicity of
this function at such a scale.

\begin{figure}[H]
\centering%
\begin{subfigure}[b]{0.5\textwidth}
\centering
\includegraphics[width=\textwidth]{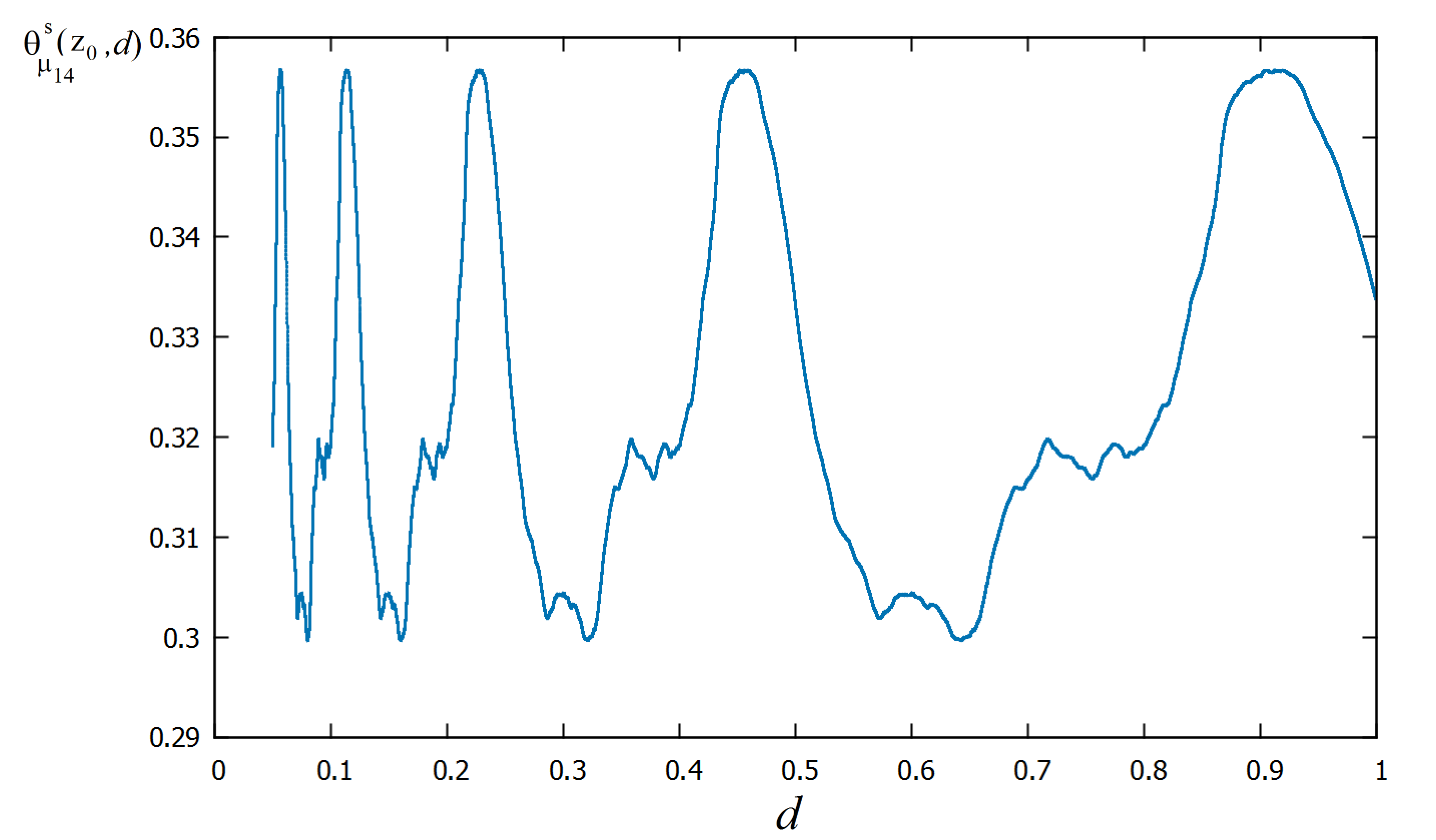}
\caption{Values of $\theta _{\mu _{14}}^{s}(z_{0},d)$ for $d\in \lbrack \varepsilon ,1]$ and $\varepsilon =0.05.$}
\label{densgraphs1}
\end{subfigure}\hfill 
\begin{subfigure}[b]{0.47\textwidth}
\centering
\includegraphics[width=\textwidth]{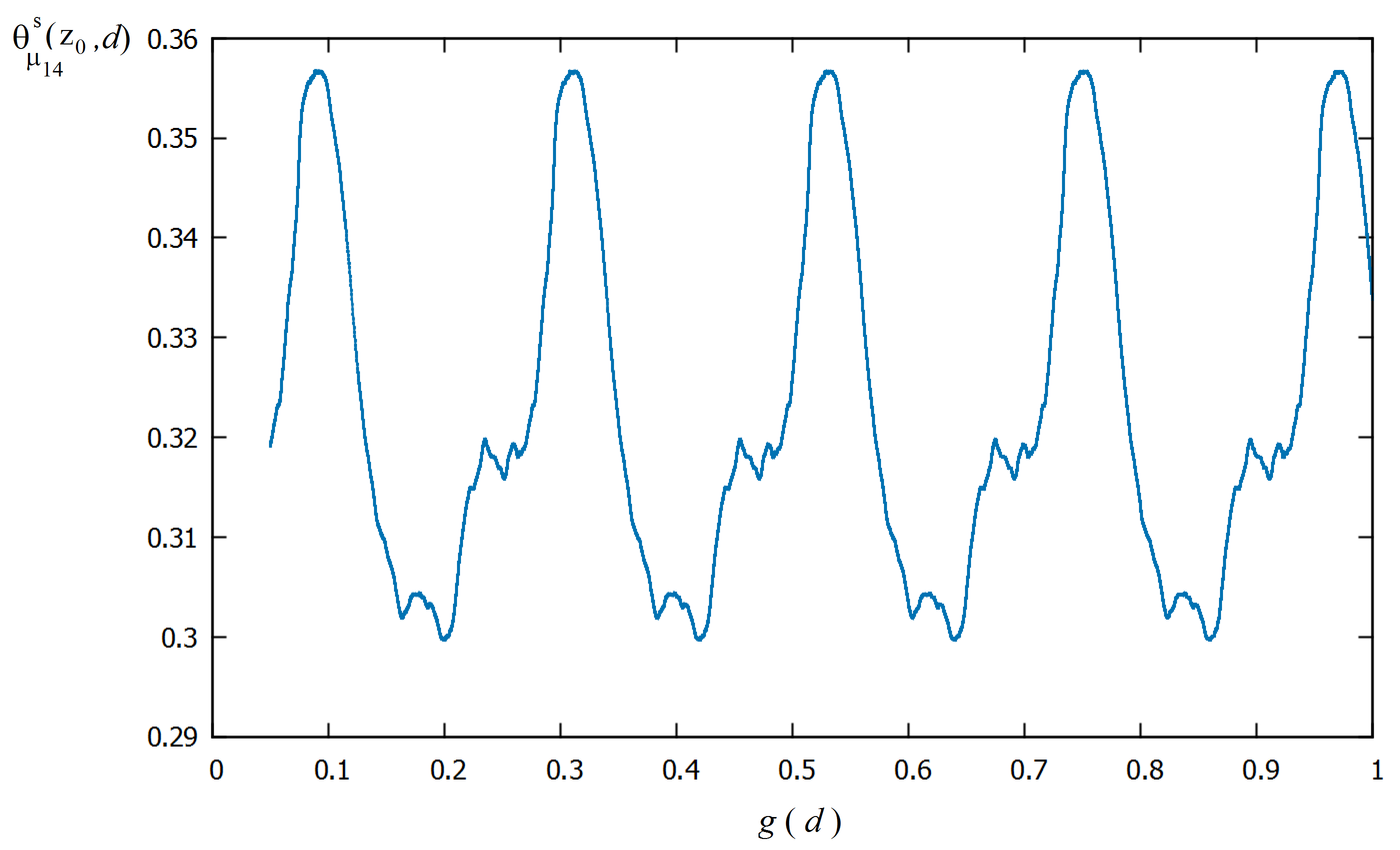}
\caption{Values of $(g(d),\theta _{\mu _{14}}^{s}(z_{0},d)),$ where $g(d):=\varepsilon +\frac{\varepsilon -1}{\log (\varepsilon )}(\log (d)-\log (\varepsilon ))$ and $\varepsilon =0.05.$}
\label{densgraphs2}
\end{subfigure}
\caption{ Densities at $z_{0}$ }
\end{figure}

We present in Table \ref{numresults2} the estimates $P_{k}$ of $P^{s}(S)$
and $C_{k}$ of $C^{s}(S)$ obtained by our algorithms for $k=14.$ The lower
and upper bounds of $P^{s}(S)$ are denoted by $P_{k}^{\inf }$ and $%
P_{k}^{\sup },$ respectively, and the bounds of $C^{s}(S)$ are denoted by $%
C_{k}^{\inf }$ and $C_{k}^{\sup },$ respectively. These results were
computed in \cite{LLMM2} and \cite{LLMM3}), respectively. Recall that $%
\left( P^{s}(S)\right) ^{-1}$ and $\left( C^{s}(S)\right) ^{-1}$ are the $%
\mu $-densities of the balls of minimum and maximum $\mu $-density in the
set of typical balls. The estimates $P_{k}$ and $C_{k}$ are obtained by
replacing $S$ with $A_{k}$ and $\mu $ with $\mu _{k}.$ Again, we have used
open balls in the estimation of the density of the ball of minimum $\mu _{k}$%
-density, and closed balls for the density of the ball of maximum $\mu _{k}$%
-density. The centre and radius of the open ball of minimum $\mu _{k}$%
-density are denoted by $x_{k}^{\ast }$ and $d_{k},$ respectively, and the
centre and radius corresponding to the closed ball of maximum $\mu _{k}$%
-density are denoted by $y_{k}^{\ast }$ and $D_{k}.$ The table also contains
the corresponding optimal $\mu _{k}$-densities and their bounds. The upper
bound $P_{k}^{\sup }:=K_{k}^{P}P_{k}$ \ of $P^{s}(S)$ is slightly improved
here with respect to the one given in \cite{LLMM2}. Here $K_{k}^{P}:=(1-%
\frac{2^{5-k}}{\sqrt{3}})^{-s}$ instead of the value $K_{k}^{P}=(1-\frac{%
2^{6-k}}{\sqrt{3}})^{-s}$ used in \cite{LLMM2}. This gives the value $%
P_{14}^{\sup }=1.671292$ given in Table \ref{numresults2} instead of the
value $P_{14}^{\sup }=1.668305$ given in Table 1 in \cite{LLMM2}. \newline
The results of the following corollary are based on the estimates of Tables %
\ref{numresults1} and \ref{numresults2}.

\begin{corollary}
Let $S$ be the Sierpinski gasket. \newline
(i) For any $\alpha\in \mathcal{M}^{s}\lfloor_{S},$ $Spec(\alpha,S)$ is the
union of two closed disjointed intervals.\newline
(ii)%
\begin{align*}
Spec(\mu,S) & \sim\left[ 0.2997,0.3567\right] \cup\left[ 0.5994,0.9951\right]
\\
\left[ 0.2998,0.3566\right] \cup\left[ 0.5999,0.9944\right] & \subset
Spec(\mu,S)\subset\left[ 0.2996,0.3568\right] \cup\left[ 0.5983,0.9970\right]
\end{align*}
(iii)%
\begin{align*}
Spec(P^{s}\lfloor_{S},S) & \sim\left[ 0.5,0.5951\right] \cup\left[ 1,1.6602%
\right] \\
\lbrack0.5010,0.5945]\cup\lbrack1,1.6578] & \subset Spec(P^{s}\lfloor
_{S},S)\subset\lbrack0.4995,0.5963]\cup\lbrack1,1.6662]
\end{align*}
(iv)%
\begin{align*}
Spec(C^{s}\lfloor_{S},S) & \sim\left[ 0.3012,0.3584\right] \cup\left[
0.6023,1\right] \\
\lbrack0.3015,0.3577]\cup\lbrack0.6032,1] & \subset Spec(C^{s}\lfloor
_{S},S)\subset\lbrack0.3005,0.3588]\cup\lbrack0.6002,1]
\end{align*}
\end{corollary}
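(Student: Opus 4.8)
The plan is to derive everything from the structural identity in Theorem~\ref{Sierpinskispectrum}(ii) together with the scaling relation between the measures of $\mathcal{M}^{s}\lfloor_{S}$, and then to feed in the certified numerical bounds from Theorem~\ref{densapp} (for $\underline{\theta}^{s}_{\mu}(z_{0}),\overline{\theta}^{s}_{\mu}(z_{0})$) and from the estimates of $P^{s}(S)$ and $C^{s}(S)$ in \cite{LLMM2,LLMM3}. First I would record the scaling: by \eqref{muvshaus} we have $\theta^{s}_{\alpha}(x,d)=\alpha(S)\,\theta^{s}_{\mu}(x,d)$ for every $\alpha\in\mathcal{M}^{s}\lfloor_{S}$, so $Spec(\alpha,S)=\alpha(S)\,Spec(\mu,S)$ and $\underline{\theta}^{s}_{\alpha}(z_{0})=\alpha(S)\underline{\theta}^{s}_{\mu}(z_{0})$, $\overline{\theta}^{s}_{\alpha}(z_{0})=\alpha(S)\overline{\theta}^{s}_{\mu}(z_{0})$. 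Combined with Theorem~\ref{Sierpinskispectrum}(ii) this yields
\[
Spec(\alpha,S)=\bigl[\alpha(S)\underline{\theta}^{s}_{\mu}(z_{0}),\,\alpha(S)\overline{\theta}^{s}_{\mu}(z_{0})\bigr]\cup\Bigl[\tfrac{\alpha(S)}{P^{s}(S)},\tfrac{\alpha(S)}{C^{s}(S)}\Bigr],
\]
which is the common engine for all four parts.

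For part (i), since $\alpha(S)>0$ the two blocks are disjoint for a general $\alpha$ if and only if they are disjoint for $\mu$; thus it suffices to prove the single strict inequality $\overline{\theta}^{s}_{\mu}(z_{0})<1/P^{s}(S)$ (recall $C^{s}(S)\le P^{s}(S)$, so the second interval lies to the right of the first). Here I would invoke only the one-sided certified bounds: Theorem~\ref{densapp} gives $\overline{\theta}^{s}_{\mu}(z_{0})\le\overline{\xi}_{k}^{\sup}$, and the estimates of \cite{LLMM2} give $1/P^{s}(S)\ge 1/P_{k}^{\sup}$. Reading off $k=14$ from Tables~\ref{numresults1} and \ref{numresults2}, one has $\overline{\xi}_{14}^{\sup}\le 0.3568$ while $1/P_{14}^{\sup}\ge 0.5983$, and since $0.3568<0.5983$ there is a genuine gap, so the two intervals are disjoint and (i) follows.

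For parts (ii)--(iv) I would simply specialise $\alpha(S)$ in the displayed formula: $\alpha(S)=1$ for $\mu$; $\alpha(S)=P^{s}(S)$ for $P^{s}\lfloor_{S}$, which makes the second interval $[1,P^{s}(S)/C^{s}(S)]$ with left endpoint exactly $1$; and $\alpha(S)=C^{s}(S)$ for $C^{s}\lfloor_{S}$, giving second interval $[C^{s}(S)/P^{s}(S),1]$ with right endpoint exactly $1$. The central estimates ($\sim$) come from substituting $\underline{\xi}_{14},\overline{\xi}_{14},P_{14},C_{14}$, and the rigorous two-sided containments come from interval arithmetic, pairing endpoints so as to minimise or maximise each product: e.g. the inner interval for $P^{s}\lfloor_{S}$ uses $P_{14}^{\sup}\underline{\xi}_{14}^{\sup}$ and $P_{14}^{\inf}\overline{\xi}_{14}^{\inf}$, the outer one uses $P_{14}^{\inf}\underline{\xi}_{14}^{\inf}$ and $P_{14}^{\sup}\overline{\xi}_{14}^{\sup}$, with the analogous extremal pairings for the ratios $P^{s}(S)/C^{s}(S)$ and $C^{s}(S)/P^{s}(S)$.

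The only real difficulty is bookkeeping rather than mathematics: one must carry every quantity as a certified interval and propagate it correctly through multiplication and division by $\alpha(S)$, being careful that $P^{s}(S)$ and $C^{s}(S)$ occur both inside $\alpha(S)$ and in the denominators, so that their error bounds must be used consistently in both roles. The critical structural point—already secured by the gap $\overline{\xi}_{14}^{\sup}<1/P_{14}^{\sup}$—is that the accumulated error at $k=14$ is small enough that the two certified blocks never overlap, so the ``union of two disjoint intervals'' structure survives the passage from the exact quantities to the rigorous bounds.
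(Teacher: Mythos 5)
Your proposal is correct and follows essentially the same route as the paper's own proof: the structural identity $Spec(\alpha,S)=\alpha(S)\,Spec(\mu,S)$ from \eqref{muvshaus} applied to \eqref{sierpinskispectrum}, disjointness certified by the same one-sided bounds $\overline{\theta}_{\mu}^{s}(z_{0})\leq\overline{\xi}_{14}^{\sup}<0.3568$ and $1/P^{s}(S)\geq 1/P_{14}^{\sup}>0.5983$ from Tables~\ref{numresults1} and \ref{numresults2}, and the same extremal pairings (e.g. $P_{14}^{\sup}\underline{\xi}_{14}^{\sup}$, $P_{14}^{\inf}\overline{\xi}_{14}^{\inf}$ inner; $P_{14}^{\inf}\underline{\xi}_{14}^{\inf}$, $P_{14}^{\sup}\overline{\xi}_{14}^{\sup}$ outer) in the interval arithmetic for parts (ii)--(iv). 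There is no gap to report.
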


\begin{proof}
We know (see (\ref{sierpinskispectrum}) in Theorem \ref{Spectrum}) that 
\begin{equation}
Spec(\mu ,S)=\left[ \underline{\theta }_{\mu }^{s}(z_{0}),\overline{\theta }%
_{\mu }^{s}(z_{0})\right] \cup \left[ \frac{1}{P^{s}(S)},\frac{1}{C^{s}(S)}%
\right] ,  \label{intervals}
\end{equation}%
and that (see (\ref{muvshaus})) 
\begin{equation}
Spec(\alpha ,S)=\alpha (S)Spec(\mu ,S),\text{ \ }\alpha \in \mathcal{M}%
^{s}\lfloor _{S}.  \label{spectrum-equiv}
\end{equation}%
The two intervals in $Spec(\alpha ,S),$ $\alpha \in \mathcal{M}^{s}\lfloor
_{S}$ are disjointed if $\overline{\theta }_{\mu }^{s}(z_{0})<\frac{1}{%
P^{s}(S)}.$ Such a condition holds (see Theorem \ref{densapp}, and Tables %
\ref{numresults1} and \ref{numresults2}) because 
\begin{equation*}
\overline{\theta }_{\mu }^{s}(z_{0})\leq \overline{\xi }_{14}^{\sup }<0.3568
\end{equation*}%
and 
\begin{equation*}
\frac{1}{P^{s}(S)}\geq \frac{1}{P_{14}^{\sup }}>0.5983.
\end{equation*}%
Using (\ref{intervals}), Theorem \ref{densapp} and (\ref{spectrum-equiv}),
we have that 
\begin{align*}
Spec(\mu ,S)& \sim \lbrack \underline{\xi }_{14},\overline{\xi }_{14}]\cup %
\left[ \frac{1}{P_{14}},\frac{1}{C_{14}}\right] , \\
\left[ \underline{\xi }_{14}^{\sup },\overline{\xi }_{14}^{\inf }\right]
\cup \left[ \frac{1}{P_{14}^{\inf }},\frac{1}{C_{14}^{\sup }}\right] &
\subset Spec(\mu ,S)\subset \left[ \underline{\xi }_{14}^{\inf },\overline{%
\xi }_{14}^{\sup }\right] \cup \left[ \frac{1}{P_{14}^{\sup }},\frac{1}{%
C_{14}^{\inf }}\right] ,
\end{align*}%
\begin{align*}
Spec(P^{s}\lfloor _{S},S)& \sim \lbrack P_{14}\underline{\xi }_{\mu
_{14}},P_{14}\overline{\xi }_{\mu _{14}}]\cup \left[ 1,\frac{P_{14}}{C_{14}}%
\right] , \\
\left[ P_{14}^{\sup }\underline{\xi }_{14}^{\sup },P_{14}^{\inf }\overline{%
\xi }_{14}^{\inf }\right] \cup \left[ 1,\frac{P_{14}^{\inf }}{C_{14}^{\sup }}%
\right] & \subset Spec(P^{s}\lfloor _{S},S)\subset \left[ P_{14}^{\inf }%
\underline{\xi }_{14}^{\inf },P_{14}^{\sup }\overline{\xi }_{14}^{\sup }%
\right] \cup \left[ 1,\frac{P_{14}^{\sup }}{C_{14}^{\inf }}\right] ,
\end{align*}%
(and, analogously, for $Spec(C^{s}\lfloor _{S},S)),$ and the proof is
completed using the corresponding estimates of Tables \ref{numresults1} and %
\ref{numresults2}.
\begin{table}[H]
\centering$%
\begin{tabular}{ccc}
\hline
\multicolumn{1}{|c}{$\underline{\xi}_{14}$} & \multicolumn{1}{|c}{$[%
\underline {\xi}_{14}^{\inf},\underline{\xi}_{14}^{\sup}]$} & 
\multicolumn{1}{|c|}{$d_{14}$} \\ \hline
\multicolumn{1}{|l}{0.299714} & \multicolumn{1}{|l}{[0.299656,0.299763]} & 
\multicolumn{1}{|l|}{0.642272} \\ \hline
&  &  \\ \hline
\multicolumn{1}{|c}{$\overline{\xi}_{14}$} & \multicolumn{1}{|c}{$[%
\overline {\xi}_{14}^{\inf},\overline{\xi}_{14}^{\sup}]$} & 
\multicolumn{1}{|c|}{$D_{14}$} \\ \hline
\multicolumn{1}{|l}{0.356687} & \multicolumn{1}{|l}{[0.356645,0.356756]} & 
\multicolumn{1}{|l|}{0.913663} \\ \hline
\end{tabular}
\ \ $%
\caption{ Extreme densities at $z_{0}$\newline Estimates of $\protect\underline{\protect\theta }_{\protect\mu %
}^{s}(z_{0})$ and $\overline{\protect\theta }_{\protect\mu }^{s}(z_{0}),$
bounds and radii, $d_{k}$ and $D_{k},$ of the $\protect\mu _{k}$-optimal
balls for $k=14.$}
\label{numresults1}
\end{table}
\begin{table}[H]
\centering$%
\begin{tabular}{|c|c|c|c|c|c|}
\hline\hline
$x_{14}^{\ast }$ & $d_{14}$ & $P_{14}$ & $[P_{14}^{\inf },P_{14}^{\sup }]$ & 
$\left( P_{14}\right) ^{-1}=\mathring{\theta}_{\mu _{14}}^{s}(x_{14}^{\ast
},d_{14})$ & $\ \left[ \left( P_{14}^{\sup }\right) ^{-1},\left(
P_{14}^{\inf }\right) ^{-1}\right] $ \\ \hline
(0.5,0) & \multicolumn{1}{|l|}{0.160543} & \multicolumn{1}{|l|}{1.668305} & 
\multicolumn{1}{|l|}{[1.667178, 1.671292]} & 0.599411 & [0.598339, 0.599816]
\\ \hline\hline
$y_{14}^{\ast }$ & $D_{14}$ & $C_{14}$ & $[C_{14}^{\inf },C_{14}^{\sup }]$ & 
$\left( C_{14}\right) ^{-1}=\theta _{\mu _{14}}^{s}(y_{14}^{\ast },D_{14})$
& $\left[ \left( C_{14}^{\sup }\right) ^{-1},\left( C_{14}^{\inf }\right)
^{-1}\right] $ \\ \hline
$\left( \frac{5}{16},\frac{\sqrt{3}}{16}\right) $ & \multicolumn{1}{|l|}{
0.145957} & \multicolumn{1}{|l|}{1.004903} & \multicolumn{1}{|l|}{
[1.003109,1.005611]} & 0.995121 & [0.994420, 0.996901] \\ \hline
\end{tabular}%
\ \ \ $%
\caption{Packing and Centred measure estimates of $S$\newline Centres and radii of the balls $\mathring{B}(x_{14}^{\ast },d_{14})$
and $B(y_{14}^{\ast },D_{14})$ of minimum and maximum $\protect\mu _{14}$%
-densities, estimates $P_{14}$ and $C_{14}$ of $P^{s}(S)$ and $C^{s}(S),$
and bounds. The last two columns in the table are the $\protect\mu _{14}$%
-densities of the optimal balls (inverses of $P^{s}(S)$ and $C^{s}(S))$ and
their bounds.}
\label{numresults2}
\end{table}
\end{proof}

\begin{acknowledgement}
This work was supported by the Universidad Conmplutense de Madrid and the
Banco de Santander (PR108/20-14).
\end{acknowledgement}

\end{document}